\documentclass[a4,11pt]{article}
%%This version is version 2. We extend the results Theorem 1.2-(3). 

% パッケージ %%%%%%%%%
\usepackage{amsmath,amssymb,amscd,amsthm}
\usepackage[all]{xy}
\usepackage{enumerate}
\usepackage[dvips]{graphicx}
\usepackage{wrapfig}

%%% 定理関連の書式 %%%%%
\theoremstyle{plain}
\numberwithin{equation}{section}
\newtheorem{thm}{Theorem}[section]
\newtheorem{prop}[thm]{Proposition}
\newtheorem{cor}[thm]{Corollary}
\newtheorem{lem}[thm]{Lemma}

\theoremstyle{definition}
\newtheorem{dfn}[thm]{Definition}
\newtheorem{ex}[thm]{Example}
\newtheorem{rmk}[thm]{Remark}
\renewcommand{\proofname}{\textit{Proof}}

% \addtocounter{section}{-1}
%%%%% 記号関連 %%
\def\rank{\mathop{\mathrm{rank}}\nolimits}
\def\dim{\mathop{\mathrm{dim}}\nolimits}
\def\Im{\mathop{\mathrm{Im}}\nolimits}
\def\Ker{\mathop{\mathrm{Ker}}\nolimits}

\def\Hom{\mathop{\mathrm{Hom}}\nolimits}
\def\hom{\mathop{\mathrm{hom}}\nolimits}
\def\Ext{\mathop{\mathrm{Ext}}\nolimits}
\def\ext{\mathop{\mathrm{ext}}\nolimits}
\def\<{{\langle}}
\def\>{{\rangle}}

\def\Aut{\mathop{\mathrm{Aut}}\nolimits}
\def\Stab{\mathop{\mathrm{Stab}}\nolimits}
\def\Stabd{\mathop{\mathrm{Stab}^{\dagger}}\nolimits}
\def\+{\mathop{\oplus}\nolimits}

%% \mathbf,\mathbb,\mathfrac,\mathcal等 %%%%%%

\newcommand{\mf}[1]{{\mathfrak{#1}}}
\newcommand{\mi}[1]{{\mathit{#1}}}
\newcommand{\bb}[1]{{\mathbb{#1}}}
\newcommand{\mca}[1]{{\mathcal{#1}}}
\newcommand{\mr}[1]{{\mathrm{#1}}}

%%%%%%%%%%%%%%%%%%%
\title{Stability conditions and $\mu$-stable sheaves on K3 surfaces with Picard number one}
\author{Kotaro Kawatani
%\footnote{Graduate School of Mathematics, Osaka University, Osaka 563-0043, Japan.}
\footnote{
\textit{2010 Mathematics Subject Classification}: 14F05, 14J28, 18E30.
}
%\footnote{e-mail: kawatani@cr.math.sci.osaka-u.ac.jp} 
}
\date{}
 
\begin{document}
\maketitle

\begin{abstract}
In this article, we show that some semi-rigid $\mu$-stable sheaves on a projective K3 surface $X$ with Picard number $1$ are stable under Bridgeland's stability condition. 
As a consequence of our work, we show that the special set $U(X) \subset \Stab (X)$ introduced by Bridgeland reconstructs $X$ itself. 
This gives a sharp contrast to the case of an abelian surface. 
\end{abstract}

%\textit{Keywords}: Stability conditions, K3 surfaces, derived categories, $\mu$-stability. 

\section{Introduction and statement of results}

In the paper \cite{Bri}, Bridgeland constructed the theory of stability conditions on triangulated categories $\mca D$. 
Roughly speaking a stability condition $\sigma =(\mca A,Z)$ is a pair consisting of the heart $\mca A$ of a bounded t-structure on $\mca D$ and 
a group homomorphism $Z:K(\mca A) \to \bb C$ 
where $K(\mca A)$ is the Grothendieck group of $\mca A$. 
For $\sigma$, we can define the notion of $\sigma $-stability for objects $E \in \mca D$. 
Very roughly, $E$ is said to be $\sigma$-stable if $\arg Z(A) < \arg Z(E)$ 
for any non-trivial ``subobject'' $A$ of $E$. 
However, there is no notion of subobjects in $\mca D$. 
Thus the heart is necessary for us to define it. 

Let us consider the case $\mca D$ is the bounded derived category $D(X)$ of a projective manifold $X$. 
Namely $D(X)$ is the bounded derived category of $\mathrm{Coh}(X)$, 
where $\mathrm{Coh}(X)$ is the abelian category of coherent sheaves on $X$. 
%By virtue of the Bridgeland's theory, 
%we can deal with the ``stability'' for objects of $D(X)$ analogously to 
%the $\mu $-stability (or Gieseker-stability) for objects of $\mathrm{Coh(X)}$. 

One of the big problems is 
the non-emptiness of the moduli space $\Stab (\mca D)$ of stability conditions for an arbitrary triangulated category $\mca D$. 
However, when $X$ is a projective K3 surface or an abelian surface, 
Bridgeland found a connected component $\Stabd (X)$ of the space $\Stab(X)$ of stability conditions on $D(X)$. 
$\Stabd (X)$ can be described by using the special locus ``$U(X)$'' given by (See also Sections 2 and 3)  
\begin{multline*}
U(X) := \{ \sigma  \in \Stab (X)|\forall x \in X,\  \mca O_x\mbox{ is $\sigma$-stable with the same phase}\\
\mbox{ and $\sigma$ is good, locally finite and numerical}   \}.
\end{multline*}
Since $U(X)$ is connected by \cite{Bri2}, we can define $\Stabd(X)$ by the connected component which contains $U(X)$. 
We also remark that $U(X)$ is a proper subset of $\Stabd (X)$ if $X$ is a projective K3 surface by \cite{Bri2}. 

Broadly speaking, the topic of our research is an analysis of the relation between 
$U(X)$ and Fourier-Mukai partners of $X$. 
Originally stability conditions are defined on $D(X)$ independently of $X$. 
Let us recall that for some K3 surface $X$, there is another K3 surface $Y$ such that 
$Y$ is not isomorphic to $X$ but $D(Y)$ is equivalent to $D(X)$. 
%So we consider the following question:
%\begin{center}
%For such a $Y$, is $U(Y)$ equal to $U(X)$ ?
%\end{center}
%
%We would like to restate the above question explicitly. 
Let $\Phi:D(Y) \to D(X)$ be an equivalence. 
Then $\Phi$ naturally induces an isomorphism $\Phi _* : \Stab(Y) \to \Stab (X)$. 
%By $\Phi_*$ we can identify $\Stab(X)$ as $\Stab (Y)$. 
%This identification is not canonical, because it depends on the equivalence $\Phi$. 
%Hence 
We shall treat the following problem:
\begin{center}
\textit{Problem.} Suppose that $Y$ is not isomorphic to $X$. 
Then does there exist an equivalence $\Phi :D(Y) \to D(X)$  
so that $\Phi _*(U(Y)) = U(X)$ ? 
\end{center}

We can see that the answer of this problem is negative by the following first main theorem. 

\begin{thm}(Corollary \ref{6.7}) \label{thm1}
Let $X$ and $Y$ be projective K3 surfaces  with Picard number $1$. 
Suppose that $\Phi: D(Y) \to D(X)$ is an equivalence with $\Phi_* (U(Y)) = U(X)$.  
Then $\Phi$ can be written as$:$
\[
\Phi (?) = M \otimes f_* (?) [n], 
\]
where $M$ is a line bundle on $X$, $f$ is an isomorphism $f:Y \to X$ and $n \in \bb Z$. 
\end{thm}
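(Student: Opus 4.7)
The plan is to show that $\Phi$ takes each skyscraper sheaf on $Y$ to a shift of a skyscraper sheaf on $X$, from which a standard argument extracts the isomorphism $f$ and the line bundle $M$.

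\emph{Step 1: tracking skyscrapers.} Fix any $\sigma \in U(Y)$; by hypothesis $\Phi_*\sigma \in U(X)$. For every $y \in Y$ the skyscraper $\mca O_y$ is $\sigma$-stable with the common point phase of $U(Y)$, so $\Phi(\mca O_y)$ is $\Phi_*\sigma$-stable with the same phase, while every $\mca O_x$ ($x \in X$) is $\Phi_*\sigma$-stable with the common point phase of $U(X)$. Since $\Phi$ induces a Hodge isometry $\Phi^H$ on Mukai lattices, the vector $v := \Phi^H(v(\mca O_y))$ is primitive and isotropic in the algebraic Mukai lattice of $X$; by the Picard rank one hypothesis one may write $v = (r, c f_X, s)$ with $c^2 f_X^2 = 2rs$ and $\gcd(r,c,s) = 1$.

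\emph{Step 2: forcing the point class.} The crux is to show $r = 0$, which together with primitivity and isotropy forces $v = \pm(0,0,1)$. Suppose $r \neq 0$; after composing $\Phi$ with a shift one may assume $r > 0$. By the paper's main technical result (the stability of $\mu$-stable semi-rigid sheaves under elements of $U(X)$), a suitable shift of $\Phi(\mca O_y)$ is then a $\mu$-stable semi-rigid sheaf $F_y$ with $v(F_y) = v$. The family $\{F_y\}_{y\in Y}$ yields a classifying morphism $\psi \colon Y \to M_v(X)$ to the two-dimensional moduli space, which is a K3 surface by Mukai's theorem. Because $\Phi$ is fully faithful, distinct points of $Y$ give non-isomorphic $F_y$, so $\psi$ is injective and hence an isomorphism of K3 surfaces. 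This realizes $\Phi$, up to shift, line bundle twist and precomposition with $\psi_*$, as the Fourier--Mukai transform attached to the universal family on $M_v(X) \times X$; a direct computation of the induced action on the half-plane parametrizing $U$ then shows that such a transform cannot send $U(Y)$ into $U(X)$ when $r \neq 0$, contradicting the hypothesis.

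\emph{Step 3: assembling $f$ and $M$.} Knowing $v(\Phi(\mca O_y)) = \pm(0,0,1)$, the object $\Phi(\mca O_y)$ is $\Phi_*\sigma$-stable of point phase and has the Mukai vector of a shifted skyscraper; the characterization of such stable objects in $U(X)$ gives $\Phi(\mca O_y) \cong \mca O_{f(y)}[n_y]$ for some $f(y) \in X$ and $n_y \in \bb Z$. Flatness of the universal object together with connectedness of $Y$ make $n_y$ a constant $n$ and $f \colon Y \to X$ a morphism, and applying the same argument to $\Phi^{-1}$ produces an inverse, so $f$ is an isomorphism. Finally the autoequivalence $\Psi := \Phi \circ (f^{-1})_* \circ [-n]$ of $D(X)$ preserves every skyscraper sheaf up to isomorphism, so by a Bondal--Orlov style argument $\Psi \cong {-}\otimes M$ for some line bundle $M \in \mr{Pic}(X)$, giving $\Phi(?) = M \otimes f_*(?)[n]$.

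\emph{Main obstacle.} The hard step is ruling out $r \neq 0$ in Step 2; this is where one must combine the paper's stability results for semi-rigid $\mu$-stable sheaves with an explicit study of how $\Phi^H$ translates the half-plane parametrizing $U(Y)$ inside the one parametrizing $U(X)$.
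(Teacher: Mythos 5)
Your skeleton agrees with the paper's: reduce everything to showing that each $\Phi(\mca O_y)$ is a shifted skyscraper, with the only hard point being the exclusion of nonzero rank, and your Step 3 is essentially the paper's final paragraph. The problem is that Step 2, which carries the entire mathematical content of the theorem, has two genuine gaps. First, the claim that a shift of $\Phi(\mca O_y)$ is a $\mu$-stable semi-rigid sheaf does not follow from the paper's main technical result: Theorem \ref{4.7} goes in the direction ``$\mu$-stable locally free of rank $\leq \sqrt d$ implies $\sigma$-stable,'' not the converse, and its rank hypothesis has no reason to hold for $v=\Phi^H(0,0,1)$. What is actually needed to pass from ``$\sigma$-semistable for all $\sigma\in V(X)$'' to ``Gieseker stable torsion free sheaf'' is the large-volume-limit statement (Lemma \ref{6.4}, due to Bridgeland and Toda) together with Mukai's criterion; you never invoke it.

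Second, and more seriously, your concluding contradiction --- ``a direct computation of the induced action on the half-plane \ldots shows that such a transform cannot send $U(Y)$ into $U(X)$ when $r\neq 0$'' --- is asserted with no argument. This is precisely Proposition \ref{6.5}, the crux of the whole proof, and it is not a routine computation: since any equivalence preserves the property of being good, membership in $U(X)$ cannot be detected by where $\Phi^H$ sends the half-plane; one must actually destabilize the sheaf. The paper does this by twisting $E$ by a suitable line bundle $M$ so that $T_M(E)$ becomes a genuine two-term complex of nonzero rank (Lemma \ref{6.3}), observing that $(T_M)_*(U(X))\cap V(X)$ still contains the large-volume region $V(X)_{>2}^M$ (Example \ref{6.2}, resting on Corollary \ref{5.6}), and then applying Lemma \ref{6.4} there to force $T_M(E)$ to be a sheaf, a contradiction. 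Your detour through the moduli space $M_v(X)$ and the universal family does not produce the required destabilizing $\sigma$ and is not needed; without a substitute for the spherical-twist argument the proof is incomplete.
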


Recall that if $X$ is a projective K3 surface of Picard number $1$ and $Y$ is a projective manifold such that $D(X) \sim D(Y)$ then $Y$ is also a projective K3 surface of Picard number $1$. Suitable reference is, for instance, \cite{B-M} or \cite{Kaw}. 
Furthermore in Corollary \ref{group}, we give the interpretation of Theorem \ref{thm1} from the viewpoint of the autoequivalence group $\Aut (D(X))$ of $D(X)$.  

Theorem \ref{thm1} implies that the special locus $U(X)$ is determined by $X$ although $\Stab(X)$ is defined on the category $D(X)$. 
It is interesting to observe that, 
when $X$ and $Y$ are abelian surfaces, 
$\Phi _* (U(Y)) = U(X)$ for any equivalence $\Phi : D(Y) \to D(X)$ (cf. Remark \ref{6.8}). 
At first, we expected that there exists an equivalence $\Phi : D(Y) \to D(X)$ preserving $U(X)$ although $Y$ is not isomorphic to $X$.

It is well known that any Fourier-Mukai partners of a projective K3 surface $X$ are given by moduli spaces of Gieseker-stable sheaves. 
Hence our first approach was the investigation of $\sigma $-stability of $\mu$-stable (or Gieseker stable) sheaves. 

Before we state the second main theorem Theorem \ref{thm2}, we shall explain two notations which we use in the theorem (the details appear in Section $3$). 
There is a subset $V(X)$ of $U(X)$ which is (roughly) parametrized by $\bb R$-divisors $\beta$ and $\bb R$-ample divisors $\omega$. 
So we write as $\sigma _{(\beta, \omega)} \in V(X)$. 
The set $V(X)$ contains the locus $V(X)_{>2}$ defined by
\[
V(X)_{>2} := \{ \sigma _{(\beta, \omega)} \in V(X) | \omega ^2>2 \}. 
\]

%%%%%%%%%%%%%%%%%%%%%%%%%%%%%%%%%%
\begin{thm}\label{thm2}
Let $X$ be a projective K3 surface with $\mathrm{NS}(X) = \bb Z \cdot L$. We put $d = L^2 /2$. 
Let $E$ be a torsion free sheaf with $v(E)^2 = 0$ (see section 3.1 for the definition of $v(E)$) and $\rank E \leq \sqrt{d}$, and let $\sigma = (Z,\mca P)$ be in $V(X)_{>2}$.

$(1)$ If $E$ is Gieseker-stable and $E \in \mca P((0,1])$ (see section 2 for the definition of $\mca P((0,1])$), then $E$ is $\sigma $-stable.

$(2)$ If $E$ is $\mu$-stable locally free and $E \in \mca P((-1,0])$ (see section 2 for the definition of $\mca P((-1,0])$), then $E$ is $\sigma $-stable. 

$(3)$ Let $S$ be a spherical sheaf with $\rank S \leq \sqrt{d}$. 
Then $S$ is $\sigma$-stable. 
\end{thm}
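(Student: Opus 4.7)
My plan is to prove all three parts by contradiction: assume the sheaf fails to be $\sigma$-stable, exhibit a destabilising short exact sequence in the tilted heart, pass to the long exact cohomology sequence in $\mathrm{Coh}(X)$, and derive a numerical contradiction from the combination of the classical (Gieseker or $\mu$-) stability of $E$, the semi-rigidity $v(E)^2=0$ (resp.\ $v(S)^2=-2$), the rank bound $\rank E\le\sqrt{d}$, the ampleness bound $\omega^2>2$, and a Hodge-index-type inequality for the Mukai pairing.

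I would first record the setup for $\sigma=\sigma_{(\beta,\omega)}\in V(X)_{>2}$: the heart $\mathcal{A}_{(\beta,\omega)}$ is the standard tilt of $\mathrm{Coh}(X)$ along the torsion pair given by the sign of $\mu_{\beta,\omega}$, and the central charge reads $Z_\sigma(F)=\langle v(F),e^{\beta+i\omega}\rangle$. The main numerical tool is the Hodge-index-type inequality $\langle v,w\rangle^2\ge v^2\cdot w^2$ for two algebraic Mukai vectors $v,w$ both with positive imaginary part of $Z_\sigma$, which should be available from the setup producing $V(X)$.

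For part (1), the condition $E\in\mathcal{P}((0,1])$ puts $E$ in $\mathcal{A}_{(\beta,\omega)}$ as an honest sheaf. A destabilising $0\to A\to E\to B\to 0$ in the heart, together with $H^{-1}(E)=0$, forces $A$ to be a sheaf and yields the four-term sheaf sequence
\[
0\to H^{-1}(B)\to A\to E\to H^0(B)\to 0.
\]
Hence $A':=\mathrm{im}(A\to E)$ is a proper subsheaf of $E$, and Gieseker-stability of $E$ gives $p_{A'}<p_E$. The tilting constraints on $H^{-1}(B)$ (its $\mu_\beta$-HN factors have slope $\le 0$) and on the torsion-free part of $H^0(B)$ (slopes $>0$) then turn the phase inequality $\phi_\sigma(A)\ge\phi_\sigma(E)$ into a sign statement on $\langle v(A),e^{\beta+i\omega}\rangle$. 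Applying the Hodge-index-type inequality to $v(A)$ and $v(E)$ with $v(E)^2=0$, and using $\omega^2>2$ together with $\rank A\le\rank E\le\sqrt{d}$, produces the required contradiction.

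For part (2) I reduce to (1) by shifting: since $E\in\mathcal{P}((-1,0])$, the complex $E[1]$ lies in $\mathcal{A}_{(\beta,\omega)}$. A destabilising subobject of $E[1]$ has cohomology only in degrees $-1,0$, and the cohomology sequence now identifies $H^{-1}$ of the subobject as a subsheaf of $E$. Local-freeness is used to keep the dualised sequence exact and to invoke $\mu$-stability symmetrically on subsheaves and quotients; after that the Mukai-pairing calculation from (1) transports to the shifted vectors. Part (3) is then a variant of (1) and (2) applied to a spherical $S$, split by whether $S$ or $S[1]$ sits in $\mathcal{P}((0,1])$; the equality $v(S)^2=-2$ only tightens the numerics, since a destabilising sequence with $v(A)^2,v(B)^2\ge 0$ would force $\langle v(A),v(B)\rangle\le -1$, incompatible with equal phases.

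The main obstacle I anticipate is part (2). After the shift, both the subobject and the quotient can carry nonzero $H^{-1}$ and $H^0$, so the bookkeeping of the cohomology-sheaf sequence becomes markedly heavier, and the local-freeness hypothesis must be invoked carefully to eliminate configurations with no sheaf-theoretic counterpart. Checking that $\rank E\le\sqrt{d}$ is exactly sharp for the Mukai-inequality calculation to close in each of the three regimes, rather than some weaker bound, is the other point where I expect the argument to require the most care.
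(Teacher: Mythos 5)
Your overall architecture --- argue by contradiction, extract a destabilising short exact sequence in the tilted heart, pass to the long exact sequence of cohomology sheaves, and close with a numerical estimate using $v(E)^2=0$, $\omega^2>2$ and $\rank E\le\sqrt d$ --- is the same as the paper's (Lemmas \ref{4.1}, \ref{4.4} and Propositions \ref{4.2}, \ref{4.5} carry out exactly the cohomology bookkeeping you describe, including the four-term sheaf sequence and the use of local-freeness in part (2) to exclude zero-dimensional torsion in the quotient). But the step where you actually derive the contradiction does not work as stated, for two reasons. First, the inequality $\langle v,w\rangle^2\ge v^2\cdot w^2$ is not available in the generality you invoke: for Picard number one the lattice $\mca N(X)$ has signature $(2,1)$, so two classes can span a positive-definite plane, on which Cauchy--Schwarz goes the other way; having positive imaginary part of $Z_\sigma$ does not rule this out. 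Second, you use $\rank A\le\rank E$ for the destabilising subobject $A$, but this is false in the tilted heart: from $0\to H^{-1}(B)\to A\to E\to H^0(B)\to 0$ one gets $\rank A=\rank E+\rank H^{-1}(B)-\rank H^0(B)$, which can exceed $\rank E$ by an arbitrary amount. The paper never bounds the rank of the destabiliser, and cannot.

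What the paper does instead, and what your proposal is missing, is a structural reduction followed by an explicit two-variable computation. Propositions \ref{4.2} and \ref{4.5} show that if $E$ fails to be $\sigma$-stable then some \emph{stable factor} $S$ of the destabiliser is a torsion-free sheaf with $v(S)^2=-2$ whose slope is squeezed strictly between $\beta\omega$ and $\mu_\omega(E)$ (the sign of $v(S)^2$ is forced by formula (\ref{4.0}), the hypothesis $v(E)^2\le0$, and the slope inequality of Lemma \ref{4.1}, resp.\ \ref{4.4}). Then Lemma \ref{4.6} (resp.\ Lemma \ref{5.1} for part (3)) shows, by a monotonicity analysis of the function $N_{A,E}(x,y)$ in $x=\beta/L$ and $y=\omega/L$ reducing everything to the boundary case $\omega^2=2$, $x=n_A/r_A$, that any such $S$ automatically satisfies $\arg Z(S)<\arg Z(E)$ (resp.\ the reverse inequality in the dual case), so it cannot destabilise. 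The hypothesis $\rank E\le\sqrt d$ enters only there, to place $n_A/r_A$ on the correct side of the vertex $\mf a$ of the relevant parabola; no control on $\rank S$ is needed. Without the reduction to a spherical destabiliser and this explicit estimate, your Mukai-pairing argument has no way to close; and your part (3) remark that $v(A)^2,v(B)^2\ge0$ would force $\langle v(A),v(B)\rangle\le-1$ addresses a configuration that does not occur, since the stable factors appearing there again have square $-2$.
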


The assertions (1) and (2) are proved in Theorem \ref{4.7}, and the assertion (3) is Proposition \ref{5.2}. 
The assumption “$\rank E \leq \sqrt{d}$ is the best possible in some sense (see Example \ref{5.3}), 
and we can not remove the assumption of local-freeness in (2) (see Corollary \ref{5.6}). 
We prove Theorem \ref{thm1} applying Theorem \ref{thm2}.

Finally we explain the contents of this paper.  
Section 2 is a survey of the general theory of stability conditions on triangulated categories. 
In Section 3, we study the case when $\mca D =D(X)$ where $X$ is a projective K3 surface. 
In the last half of Section 3, we shall recall the results on Gieseker stable sheaves and on Fourier-Mukai partners on K3 surfaces with Picard number $1$. 

In Section 4, we shall prove (1) and (2) of Theorem \ref{thm2} (= Theorem \ref{4.7}). 
Hence the main part of this section is the comparison between the $\mu$-stability (or Gieseker-stability) 
and the $\sigma$-stability. 
We remark that 
the $\sigma $-stability of $E \in D(X)$ depends on the argument of the complex number $Z(E)$. 
Hence we need an appropriate description of $Z(E)$ to compare the argument of $Z(E)$ and the slope $\mu_{\omega }(E)$. 
There are two keys for the comparison. 
One is the following expression of the stability function $Z_{(\beta,\omega)}$
(The definition of $Z_{(\beta,\omega)}$ is in Section 3. ) :
\begin{equation*}
Z_{(\beta ,\omega )}(E) = \frac{v(E)^2}{2r_E} + \frac{r_E}{2}
\Bigl( \omega + \sqrt{-1}\bigl( \frac{\Delta _E}{r_E} -\beta    \bigr) \Bigr)^2 . 
\end{equation*}
The other is the assumption that the Picard number of $X$ is one. 
If $X$ satisfies the assumption, the right hand side of the above formula is just complex number. 
Thus we can compare the slope $\mu_{\omega}(E)$ and the argument of $Z(E)$. 

In Section 5, 
we prove Theorem \ref{thm2} (3) (= Proposition \ref{5.2}). 
The strategy of the proof is essentially the same as that of Theorem \ref{4.7}. 
We have two applications of Proposition \ref{5.2}. 
One is to prove that we cannot drop the assumption on rank and  the condition of local-freeness in Theorem \ref{4.7}. 
The other is the determination of Harder-Narashimhan filtrations of some special objects $T_S(\mca O_x)$ (cf. Corollary \ref{5.6} and \ref{5.7}). 
In general, it is very difficult to determine Harder-Narashimhan filtrations. 
So, these examples are valuable. 

In Section 6, we shall treat two applications of Theorem \ref{thm2}. 
The first application is to find some pairs $(E,\sigma)$ such that an object $ E \in D(X)$ is a true complex and 
$E$ is $\sigma $-stable for some $\sigma \in U(X)$. 
The second application is to prove Theorem \ref{thm1}. 
%We effectively use the idea of this example in the proof of Theorem \ref{thm1}. 
\vspace{11pt}

\noindent
\textbf{Acknowledgement}. 
I am very grateful to the referees 
for their careful reading and  giving me their kind advices and comments. 

%%%%%%%%%%%%%%%序文ここまで

\section{Bridgeland's stability condition}
This section is a survey of the general theory of Bridgeland's stability conditions on triangulated categories.
Let $\mca D$ be a $\bb C$ linear triangulated category. The symbol $[1]$ means the shift of $\mca D$ and $[n]$ means 
the $n$-times composition of $[1]$.

\begin{dfn}\label{d1.1}
Let $\sigma = (Z,\mca P)$ be a pair consisting of a group homomorphsim $Z:K(\mca D) \to \bb C$ from the Grothendieck group of $\mca D$ to $\bb C$, and  
a collection $\mca P = \{ \mca P(\phi) \}$ of additive full subcategories $\mca P(\phi)$ of $\mca D$ parametrized by the real numbers $\phi$. 
This pair $\sigma$ is a stability condition on $\mca D$ if it is satisfied the following condition:  \vspace{5pt}\\
(1) If $0 \neq E \in \mca P(\phi ) $, then $Z(E)=m(E) \exp(\sqrt{-1} \pi \phi)$ where $m(E)>0$.  \vspace{5pt}\\
(2) If $\phi > \psi$, then $\Hom_{\mca D}(E,F)=0$ for all $E \in \mca P(\phi)$ and $F \in \mca P(\psi)$. \vspace{5pt}\\
(3) $\mca P(\phi +1) = \mca P(\phi)[1]$. \vspace{5pt}\\
(4) For all $0 \neq E \in \mca D$, there is a sequence of distinguished triangles satisfying the following condition:
\begin{equation}
\resizebox{0.99\hsize}{!}{
\xymatrix{
0\ar[rr]	&	&	E_1\ar[ld]\ar[rr]	& 	& E_2\ar[r]\ar[ld] & \cdots \ar[r] & E_{n-1}\ar[rr]	&	  &E_n=E ,\ar[ld]\\ 
	&A_1\ar@{-->}[ul]^{[1]}& 		&A_2\ar@{-->}[ul]^{[1]}& 	  &		  &			&A_n\ar@{-->}[ul]^{[1]}& 
}
\label{HNF}
}
\end{equation}
where each $A_i$ is in $\mca P(\phi _i)$ ($i=1, \cdots n$) with $\phi _1 > \cdots > \phi _n$.
\end{dfn}

\begin{rmk}\label{r1.2}

(1) Each $\mca P(\phi)$ is an abelian category. 

(2) By definition, for each $0 \neq E \in \mca D$, there is at most one $\phi \in \bb R$ such that $E \in \mca P(\phi)$. 
When $E \in \mca P(\phi)$, 
we define $\arg Z(E) := \phi$ and call $\phi$ the \textit{phase} of $E$. 

(3) $E \in \mca D$ is said to be \textit{$\sigma$-semistable} 
when $E \in \mca P(\phi)$ for some $\phi \in \bb R$.
In particular, if $E$ is minimal in $\mca P(\phi)$ (that is, $E$ has no non-trivial subobjects) 
then $E$ is said to be \textit{$\sigma $-stable}.

(4) The sequence (\ref{HNF}) is unique up to isomorphism. 
We can easily check this by using the property Definition \ref{d1.1} (2).
Hence we define $\phi^+_{\sigma}(E):= \phi _1$, and $\phi ^-_{\sigma }(E) := \phi_n$. 
We call the sequence the \textit{Harder-Narashimhan filtration} (for short HN filtration) of $E$, 
and each $A_i$ a \textit{semistable factor} of $E$.

(5) Let $I \subset \bb R$ be an interval. For $I$, we define 
$\mca P(I)$ as the extension closed additive full subcategory of $\mca D$ generated by $\mca P(\phi)$ ($\phi \in I $). 
If $E \in \mca P(I)$, then $\phi^+(E)$ and $\phi ^-(E) \in I$. 

(6) A stability condition $\sigma $ is said to be \textit{locally finite} if 
for all $\phi \in \bb R$, there is a positive number $\epsilon$ such that the quasi-abelian category 
$\mca P((\phi -\epsilon,\phi +\epsilon))$ is finite length, that is 
both increasing and decreasing sequences of subobjects of $A$ 
will terminate (See also \S 4 of \cite{Bri}). 
The property of local-finiteness guarantees the existence of Jordan-H\"{o}lder filtrations (for short JH filtrations), that is,  
for any $0 \neq A \in \mca P(\phi)$, there exists a sequence of distinguished triangles 
\[
\resizebox{0.99\hsize}{!}{
\xymatrix{
0\ar[rr]	&	&	A_1\ar[ld]\ar[rr]	& 	& A_2\ar[r]\ar[ld] & \cdots \ar[r] & A_{n-1}\ar[rr]	&	  &A_n= A \ar[ld]\\ 
	&S_1\ar@{-->}[ul]^{[1]}& 		&S_2\ar@{-->}[ul]^{[1]}& 	  &		  &			&S_n\ar@{-->}[ul]^{[1]}& 
}
}
\]
such that each $S_i$ is $\sigma $-stable with phase $\phi$.
We call each $S_i $ a \textit{stable factor} of $A$. 
We remark that JH filtrations may not be unique. 
%(7) The set of stability conditions on $\mca D$ has a metric defined by 
%\[
%d(\sigma , \tau ) :=\sup_{0 \neq E \in \mca D } \{ |\phi^+_{\sigma }(E) -\phi^+_{\tau}(E)|, |\phi^-_{\sigma }(E) -\phi^-_{\tau}(E)| , |\log m_{\sigma }(E) - \log m_{\tau }(E)|  \},
%\]
%where $\sigma $ and $\tau $ are stability conditions on $\mca D$. 
%The value $d(\sigma ,\tau)$ may be $\infty$. 
\end{rmk}

In general it is difficult to construct stability conditions on $\mca D$. 
However, by using Proposition \ref{p1.5} (below), we can explicitly construct them in some cases. 
Before we state the proposition, we introduce the notion of a stability condition on abelian categories. 

\begin{dfn}\label{d1.4}
Let $\mca A$ be an abelian category, and 
$Z:K(\mca A) \to \bb C$ a group homomorphism from the Grothendieck group $K(\mca A)$ of $\mca A$ to $\bb C$, 
satisfying 
\[
Z(E) =m_E \exp(\sqrt{-1}\pi \phi_E)\mbox{ for }0\neq E \in \mca A, \mbox{ where }\phi_E \in (0,1]\mbox{ and }m_E >0.
\]
We call $Z$ a \textit{stability function} on $\mca A$. 
An object $E \in \mca A$ is called a \textit{(semi)stable object} for $Z$ when, for any non-trivial subobjects $F$ of $E$, 
the following inequality holds:
\[
\phi _F < \phi _E,  (\phi _F \leq \phi _E). 
\]
If $Z$ has the following property, we call $Z$ a stability function equipped with the \textit{Harder-Narashimhan (for short HN) property}:
\begin{center}
$
0 \neq \forall E \in \mca A, \exists \mbox{ a filtration } 
0 \subset E_1 \subset E_2 \subset \cdots \subset E_{n-1} \subset E_n =E
$ such that \\
$A_i= E_i/E_{i-1}$ is semistable and $\phi _{A_1} > \cdots > \phi _{A_n}$. 
\end{center}
\end{dfn}

\begin{prop}(\cite[Proposition\ 5.3]{Bri}) \label{p1.5}
Let $\mca D$ be a triangulated category. Then the following are equivalent:

(1) To give a stability condition $\sigma =(Z,\mca P)$ on $\mca D$.

(2) To give a pair $(\mca A,Z_{\mca A})$ consisting of  the heart $\mca A$ of a bounded t-structure on $\mca D$ and a stability function $Z_{\mca A}$ on $\mca A$ which has the HN property. 
\end{prop}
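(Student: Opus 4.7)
\noindent\textit{Proof proposal.}
The plan is to prove the two directions of Proposition \ref{p1.5} separately; the implication $(1) \Rightarrow (2)$ is essentially formal, while the substantive work lies in $(2) \Rightarrow (1)$.

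For $(1) \Rightarrow (2)$, I would take $\mca A := \mca P((0,1])$ and show it is the heart of a bounded t-structure on $\mca D$. Axiom $(3)$ in Definition \ref{d1.1} supplies the shift inclusion $\mca P((0, \infty))[1] \subset \mca P((0, \infty))$, axiom $(2)$ gives the Hom vanishing between $\mca P((0, \infty))$ and $\mca P((-\infty, 0])$, and axiom $(4)$, by regrouping the HN filtration of any $E \in \mca D$ according to whether the semistable factors have positive or nonpositive phase, provides both the truncation triangle and the boundedness of the t-structure. The function $Z$ restricted to $K(\mca A) = K(\mca D)$ is then a stability function by axiom $(1)$, and the HN property on $\mca A$ is inherited directly from axiom $(4)$, since all semistable factors of an object of $\mca A$ must lie back in $\mca P((0,1]) = \mca A$.

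For $(2) \Rightarrow (1)$, I would define $\mca P(\phi) \subset \mca A$ for $\phi \in (0,1]$ to consist of $0$ together with the semistable objects of $\mca A$ of phase $\phi$, and extend to all $\phi \in \bb R$ by iterating the forced rule $\mca P(\phi + 1) = \mca P(\phi)[1]$. Axioms $(1)$ and $(3)$ of Definition \ref{d1.1} are immediate. To verify axiom $(2)$ for $\phi > \psi$, I would shift both arguments by a common integer so that $\psi \in (0, 1]$; if $\phi \in (0, 1]$ as well, the Hom vanishing is the standard slope argument for semistable objects inside $\mca A$, while if $\phi \in (n, n+1]$ with $n \geq 1$, the object on the left has the form $E'[n]$ for some $E' \in \mca A$, so the group in question becomes $\Hom_{\mca D}(E', F[-n])$, which vanishes because $\mca A$ is the heart of a t-structure. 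For axiom $(4)$, I would first use the t-structure to decompose any $0 \neq E \in \mca D$ as an iterated extension of its shifted cohomology objects $H^i(E)[-i]$, then refine each $H^i(E)$ by its HN filtration in $\mca A$, and finally splice the two filtrations in order of decreasing phase.

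The step I expect to be most delicate is the Hom-vanishing axiom $(2)$ when $\phi$ and $\psi$ lie in different $(n, n+1]$-intervals, since there the argument cannot be carried out inside $\mca A$ alone and must use the t-structure vanishing $\Hom_{\mca D}(A, B[k]) = 0$ for $A, B \in \mca A$ and $k < 0$. Once this is in place, the construction of the HN filtration in $\mca D$ is a mechanical splicing of the heart's cohomological decomposition with the HN filtrations of the individual cohomology objects, producing the required sequence of distinguished triangles with strictly decreasing phases.
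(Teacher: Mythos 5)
Your proposal is correct and follows essentially the same route as the paper's own (sketched) proof: taking $\mca A = \mca P((0,1])$ for $(1)\Rightarrow(2)$, and for $(2)\Rightarrow(1)$ defining $\mca P(\phi)$ from the semistable objects of phase $\phi\in(0,1]$ and extending by shifts. The extra detail you supply --- the case analysis for the Hom-vanishing axiom across different intervals and the splicing of the cohomological decomposition with the HN filtrations of the $H^i(E)$ --- is exactly what the paper leaves implicit by citing Bridgeland.
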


For the convenience of readers, we give a sketch of the proof. 
\renewcommand{\proofname}{\textit{From (1) to (2)}}

\begin{proof}
For the pair $\sigma =(Z, \mca P)$, $\mca P((0,1])$ is the heart $\mca A$ of a bounded t-structure on $\mca D$. 
We define a stability function $Z_{\mca A}$ as $Z$. 
Then the pair $(\mca P((0,1]) , Z)$ is what we need. 
\end{proof}
%%%%%%%%%%%%%%%%%%%%%%%%%%%%%%%
\renewcommand{\proofname}{\textit{From (2) to (1)}} 

\begin{proof}
For a real number $\phi  \in (0,1]$ we define $\mca P(\phi)$ by
\[
\mca P(\phi ):= \{ A \in \mca A | A\mbox{ is semistable for }Z \mbox{ with }\phi _A = \phi \}\cup \{ 0 \} . 
\]
If $\psi \in \bb R \backslash (0,1]$, we define $\mca P(\psi)$ by $\mca P(\psi_0)[k]$ 
where $\psi = \psi_0 +k$ with $\psi _0 \in (0,1]$ and $k \in \bb Z$. 
Since $K(\mca A) = K(\mca D)$, we can define $Z$ by $Z_{\mca A}$. 
Then the pair $(Z,\mca P)$ gives a stability condition on $\mca D$. 
\end{proof}
\renewcommand{\proofname}{\textit{Proof}}

In the following lemma, we introduce two actions of groups on $\Stab (X)$.

\begin{lem}(\cite[Lemma\ 8.2]{Bri})\label{l1.6} %ノートでは「注意」だったが補題に格上げした
Let $\Stab (\mca D)$ be the space of stability condition on $\mca D$, 
$\tilde{GL}^+(2,\bb R)$ the universal covering space of ${GL}^+(2,\bb R)$, and 
$\Aut (\mca D)$ the autoequivalence group of $\mca D$. 
$\Stab (\mca D)$ carries a right action of $\tilde {GL}^+ (2,\bb R)$, and a left action of $\Aut (\mca D)$. In addition, these two actions commute. 
\end{lem}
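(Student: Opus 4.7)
The plan is to write down both actions explicitly and verify that (a) the axioms of Definition \ref{d1.1} are preserved under each action, (b) each is indeed a group action of the correct handedness, and (c) the two actions commute. The lemma is essentially formal once the correct formulas are written down, so the hard part is getting the definitions right, in particular parametrizing $\tilde{GL}^+(2,\bb R)$ in a way that tracks the phase lift $\phi$ and not just the central charge.

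For the left $\Aut(\mca D)$-action, given $\Phi \in \Aut(\mca D)$ and $\sigma = (Z, \mca P)$, I would set
\[
\Phi \cdot \sigma := (Z \circ [\Phi^{-1}]_*, \; \Phi(\mca P)), \qquad \Phi(\mca P)(\phi) := \Phi(\mca P(\phi)),
\]
where $[\Phi^{-1}]_* : K(\mca D) \to K(\mca D)$ is the induced map on Grothendieck groups. Since $\Phi$ is an equivalence, it takes the HN filtrations of $\sigma$ to HN filtrations of $\Phi \cdot \sigma$, preserves $\Hom$-vanishing and the shift relation $\mca P(\phi+1) = \mca P(\phi)[1]$, and the mass formula transports verbatim. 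Functoriality $(\Phi \Psi) \cdot \sigma = \Phi \cdot (\Psi \cdot \sigma)$ is immediate, so this is a left action.

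For the right $\tilde{GL}^+(2,\bb R)$-action, I would parametrize $\tilde{GL}^+(2,\bb R)$, following Bridgeland, as pairs $(T,f)$ with $T \in GL^+(2,\bb R)$ and $f : \bb R \to \bb R$ an increasing function satisfying $f(\phi+1) = f(\phi)+1$ such that the induced maps $\bb R/2\bb Z \to \bb R/2\bb Z$ agree, the first coming from $f$ and the second from the action of $T$ on $(\bb R^2 \setminus \{0\})/\bb R_{>0} \simeq S^1$. Then I would define
\[
\sigma \cdot (T,f) := (T^{-1} \circ Z, \; \mca P'), \qquad \mca P'(\phi) := \mca P(f(\phi)).
\]
The compatibility condition between $T$ and $f$ is exactly what is needed so that $Z'(E) = T^{-1} Z(E)$ lies in the ray of phase $\phi$ whenever $E \in \mca P(f(\phi))$, verifying (1) of Definition \ref{d1.1}. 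Conditions (2), (3), and (4) are inherited from $\sigma$ after the monotone reparametrization $f$, which preserves the ordering of phases. The composition law in $\tilde{GL}^+(2,\bb R)$ is $(T_1,f_1)(T_2,f_2) = (T_1 T_2, f_2 \circ f_1)$ on the $f$-component (this reversed order is what forces the action to be on the right), so a short check gives $(\sigma \cdot g_1)\cdot g_2 = \sigma \cdot (g_1 g_2)$.

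Finally, commutativity of the two actions is essentially tautological: the $\Aut$-action rewrites categories and composes the central charge with a map on $K(\mca D)$, whereas the $\tilde{GL}^+(2,\bb R)$-action only reparametrizes phases and post-composes the central charge with $T^{-1} : \bb C \to \bb C$. These two operations act on logically independent data, so applying them in either order yields the same pair $(T^{-1}\circ Z \circ [\Phi^{-1}]_*, \, \Phi(\mca P'))$. The main obstacle, as noted, is simply setting up the $\tilde{GL}^+(2,\bb R)$-parametrization correctly; once that is in place, everything reduces to bookkeeping.
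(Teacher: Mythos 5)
Your overall construction is exactly the one in the cited source \cite[Lemma 8.2]{Bri}; the paper itself offers no proof beyond that citation, so there is nothing to compare at the level of strategy, and your formulas for both actions, the verification of the axioms of Definition \ref{d1.1}, and the commutativity argument are the standard ones. There is, however, one concrete error in the step where you verify that the $\tilde{GL}^+(2,\bb R)$-action is a right action. You assert that the group law on pairs is $(T_1,f_1)(T_2,f_2)=(T_1T_2,\,f_2\circ f_1)$ and that ``this reversed order is what forces the action to be on the right.'' This is backwards. The compatibility condition defining the set of admissible pairs forces the law to be $(T_1,f_1)(T_2,f_2)=(T_1T_2,\,f_1\circ f_2)$: since $f_i$ induces the same self-map of $S^1=\bb R/2\bb Z$ as $T_i$, the composite $f_2\circ f_1$ induces the same circle map as $T_2T_1$, not $T_1T_2$, so the pair $(T_1T_2,\,f_2\circ f_1)$ is in general not an element of $\tilde{GL}^+(2,\bb R)$ at all. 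Moreover, with your stated law the ``short check'' you invoke actually fails: $(\sigma\cdot g_1)\cdot g_2$ has phase component $\mca P\circ f_1\circ f_2$, while $\sigma\cdot(g_1g_2)$ would have $\mca P\circ f_2\circ f_1$, and these differ for noncommuting $f_1,f_2$.

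The repair is immediate and changes nothing else: with the correct law $(T_1T_2,\,f_1\circ f_2)$ and your action $\sigma\cdot(T,f)=(T^{-1}\circ Z,\,\mca P\circ f)$, one computes
$(\sigma\cdot g_1)\cdot g_2=\bigl((T_1T_2)^{-1}\circ Z,\ \mca P\circ f_1\circ f_2\bigr)=\sigma\cdot(g_1g_2)$.
What makes this a \emph{right} action is not a reversed group law but the inverse $T^{-1}$ on the central-charge side combined with precomposition by $f$ on the phase side. Aside from this (and the cosmetic slip $\Phi(\mca P')$ in your final displayed pair, which should be $\phi\mapsto\Phi(\mca P(f(\phi)))$), the proposal is correct and follows the same route as the reference.
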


%Similarly to Proposition \ref{p1.5}, we give the proof. 
%
%\begin{proof}
%We first remark that 
%$\tilde {GL}^+(2,\bb R)$ can be identified with the set of a pair $(g,f)$, where 
%$g \in GL^+(2,\bb R)$ and $f:\bb R \to \bb R$ is an increasing function with $f(x+1)=f(x)+1$, satisfying that 
%the natural action of $g$ to a circle $S^1 = \bb R^2 - \{0 \}/ \bb R_{>0}$ is equal to 
%the natural action of $f$ to the circle $S^1 = \bb R/2\bb Z$. For example, if $g=\begin{pmatrix}0& -1 \\ 1 & 0 \end{pmatrix}$
%then $f$ should be $f(x)= x + 1/2 + 2k$ for some $k \in \bb Z$. 
%Let $\tilde g \in \tilde{GL}^+ (2,\bb R)$ and $\sigma = (Z,\mca P) \in \Stab (\mca D)$. The right action is given by 
%\[
%(Z, \mca P)\cdot \tilde g = (Z', \mca P ')\mbox{ where }
%Z'(E)= g^{-1}\circ Z(E), \mbox{ and }\mca P'(\phi ) = \mca P(f(\phi )).
%\]
%
%Let $\Phi \in \Aut (\mca D)$. The left action is given by 
%\[
%\Phi \cdot (Z,\mca P) = (Z^{''},\mca P{''}), \mbox{ where }
%Z^{''}(E) = Z (\Phi ^{-1}(E) )\mbox{ and }\mca P^{''}(\psi )= \Phi (\mca P(\psi )) .
%\]
%By the definition of actions, commutativity is obvious. 
%\end{proof}

\begin{rmk}\label{r1.7}
By the definition of the action of $\tilde{GL}^+(2,\bb R)$, we can easily see that 
for any $\sigma \in \Stab(\mca D)$ and any $\tilde g \in \tilde{GL}^+(2,\bb R)$, 
$E \in \mca D$ is $\sigma$-(semi)stable  if and only if 
$E$ is $\sigma \cdot \tilde g$-(semi)stable. 
\end{rmk}

\section{Stability conditions on K3 surfaces}

In this section $X$ is a projective K3 surface over $\bb C$, $\mr{Coh}(X)$ is the abelian category of coherent sheaves on $X$, 
and $D(X)$ is the bounded derived category of $\mr{Coh}(X)$. 
The purpose of this section is to give a description of $\Stab(X)$.

We first introduce some notations. 
Let $A$ and  $B$ be in $D(X)$. 
If the $i$-th cohomology $H^i(A)$ is concentrated only at degree $i=0$, we call $A$ a \textit{sheaf}. 
We put $\Hom ^n_X(A,B) := \Hom_{D(X)}(A,B[n]) $. 
If both $A$ and $B$ are sheaves, then $\Hom_X^n(A,B)$ is just $\Ext^n_{\mca O_X}(A,B)$. 
We also put $\hom^n_{X}(A,B) := \dim _{\bb C}\Hom_X^n(A,B)$ and $\ext_X^n(A,B):= \dim \Ext_{\mca O_X}^n(A,B)$. 
Sometimes we omit $X$ of $\Hom ^n_X (A,B)$ and so on. 
We remark that 
\[
\Hom_X^n(A,B) = \Hom^{2-n}_X(B,A)^*
\]
by the Serre duality. 

We secondly recall the notion of the $\mu$-stability. 
For a torsion free sheaf $F$ and an ample divisor $\omega$, the \textit{slope} $\mu_{\omega}(F)$ is 
defined by $(c_1(F) \cdot \omega)/\rank F$ where $c_1(F)$ is the first Chern class of $F$. 
If the inequality $\mu_{\omega}(A) \leq \mu_{\omega}(F)$ holds for any non-trivial subsheaf $A$ of $F$, then $F$ is said to be \textit{$\mu$-semistable}. 
Moreover if the strict inequality $\mu_{\omega}(A) < \mu_{\omega}(F)$ holds for any non-trivial subsheaf $A$ with $\rank A < \rank F$, then 
$F$ is said to be \textit{$\mu$-stable}. 
The notion of the $\mu$-stability admits the Harder-Narashimhan filtration of $F$ (details in \cite{HL}). 
We define $\mu_{\omega}^+ (F)$ by the maximal slope of semistable factors of $F$, and 
$\mu_{\omega}^- (F)$ by the minimal slope of semistable factors of $F$.

\subsection{On numerical stability conditions on $D(X)$}
  
Let $K(X)$ be the Grothendieck group of $D(X)$. $K(X)$ has the natural $\bb Z$ bilinear form $\chi$:
\[
\chi: K(X) \times K(X) \to \bb Z,\  
\chi( E,F ) := \sum_{i}(-1)^i\hom _X ^i(E,F). 
\]
Let $\mca N(X)$ be the quotient of $K(X)$ by numerical equivalent classes with respect to $\chi$. 
Then $\mca N(X)$ is $H^0(X,\bb Z) \oplus \mr{NS}(X) \oplus H^4(X,\bb Z)$, where 
$\mr{NS}(X)$ is the N\'{e}ron-Severi lattice of $X$. 
A stability condition $\sigma = (Z, \mca P)$ on $D(X)$ is said to be \textit{numerical} if 
$Z$ factors through $\mca N(X)$:
\[
\xymatrix{
K(X) \ar[r]\ar[rd]_{Z} & \mca N(X) \ar[d]^{Z_\mca{N} } \\
            &  \bb C 
}.
\]
Let $\chi _{\mca N}$ be the descent of $\chi$. 
Since $\chi _{\mca N}$ is non-degenerate on $\mca N(X) \otimes _{\bb Z} \bb C$, 
$Z_{\mca N}$ is canonically in $\mca N(X) \otimes \bb C$: 
\[
\Hom _{\bb C}(\mca N(X) \otimes \bb C, \bb C) \to \mca N(X ) \otimes \bb C,\ 
Z_{\mca N} \mapsto Z^{\vee}, 
\]
where $Z(E) = \chi _{\mca N} ( Z^{\vee},E)$. Thus we define $\Stab (X)$ by
\[
\Stab (X) := \{ \sigma \in \Stab (D(X)) | \sigma \mbox{ is locally finite and numerical}  \}.
\]
Then we have the following natural map:
\[
\pi: \Stab (X) \to \mca N(X) \otimes \bb C,\ \pi((Z,\mca P))=Z^{\vee}.
\]
We remark that $\pi$ is a locally homeomorphism (The details are in \cite[Corollary 1.3]{Bri}). 
Hence the map $\pi$ gives a complex structure on $\Stab (X)$. 
In particular $\Stab (X)$ is a complex manifold. 

Let $\langle-,- \rangle$ be the Mukai pairing on $\mca N(X) $: 
\[
\langle r \+ \Delta \+ s, r' \+ \Delta ' \+ s' \rangle = \Delta \Delta ' -r s' -r' s,
\]
where both $r \+ \Delta \+ s$ and $\ r' \+ \Delta ' \+ s'$ are in $H^0(X,\bb Z) \+ \mr{NS}(X) \+ H^4(X,\bb Z)$. 
For an objects $E\in D(X)$, we put $v(E) = ch(E)\sqrt{td_X} \in \mca N(X)$ and call it the \textit{Mukai vector} of $E$. 
Then we have $\chi (E,F) = - \langle v(E),v(F) \rangle$ for $E$ and $F \in D(X)$ by the Riemann-Roch theorem. 
We have the following famous consequence:
\begin{lem}
Let $X$ be a projective K3 surface and $E \in D(X)$. 
Assume that $\hom _X^0(E,E) =1$. Then we have
\[
\< v(E) \>^2 +2 =\hom ^1_X(E,E).
\]
Thus we have $\<  v(E) \> ^2 \geq -2$ and the equality holds if and only if $\hom ^1(E,E)=0$. 
\end{lem}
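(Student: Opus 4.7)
The plan is to expand $\chi(E,E)$ in two different ways and equate them. By the Riemann--Roch identity recorded just above the lemma statement, namely $\chi(E,F) = -\langle v(E), v(F)\rangle$, the left-hand side is $-\langle v(E)\rangle^2$. On the other hand, by the definition of $\chi$ as the alternating sum,
\[
\chi(E,E) = \hom^0_X(E,E) - \hom^1_X(E,E) + \hom^2_X(E,E).
\]
So the whole game is to identify $\hom^0$ and $\hom^2$ and then solve for $\hom^1$.

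The key input is Serre duality on a K3 surface, which has trivial canonical bundle. The excerpt has already recorded the formula $\Hom^n_X(A,B) = \Hom^{2-n}_X(B,A)^*$. Taking $A = B = E$ and $n = 2$ gives $\hom^2_X(E,E) = \hom^0_X(E,E) = 1$ by the hypothesis. Substituting and using the Riemann--Roch identity,
\[
-\langle v(E)\rangle^2 = 1 - \hom^1_X(E,E) + 1 = 2 - \hom^1_X(E,E),
\]
which rearranges to the desired equality $\langle v(E)\rangle^2 + 2 = \hom^1_X(E,E)$.

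The second assertion is then immediate: since $\hom^1_X(E,E) \geq 0$ as a dimension, we obtain $\langle v(E)\rangle^2 \geq -2$, with equality precisely when $\hom^1_X(E,E) = 0$. There is no real obstacle here; the proof is a two-line computation, and the only thing to be careful about is invoking Serre duality in its K3 form (trivial $\omega_X$) rather than the general version with a twist by the canonical bundle, but since the excerpt has already stated the K3-specific Serre duality formula this is automatic.
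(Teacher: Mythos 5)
Your computation is the intended one --- Riemann--Roch gives $\chi(E,E) = -\langle v(E)\rangle^2$, Serre duality gives $\hom^2_X(E,E) = \hom^0_X(E,E) = 1$, and one solves for $\hom^1$ --- and the paper records the lemma as a ``famous consequence'' without supplying a proof of its own. But the step you dismiss as containing ``no real obstacle'' is the only one that can fail: the identity $\chi(E,E) = \hom^0 - \hom^1 + \hom^2$ is not the definition of $\chi$, which is the alternating sum $\sum_{n \in \bb Z}(-1)^n\hom^n_X(E,E)$ over \emph{all} $n$. When $E$ is a sheaf, $\Ext^n(E,E)$ vanishes outside $0 \le n \le 2$ because $X$ is a surface, so the truncation is legitimate; but the lemma is stated for an arbitrary $E \in D(X)$ with $\hom^0_X(E,E)=1$, and simplicity alone does not force $\Hom^{n}_X(E,E)=0$ for $n<0$ (equivalently, by Serre duality, for $n>2$).

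Concretely, let $e$ be a nonzero element of $\Hom^2_X(\mca O_X,\mca O_X) \cong H^2(X,\mca O_X)$ and let $E$ be defined by the distinguished triangle $\mca O_X[1] \to E \to \mca O_X \xrightarrow{e} \mca O_X[2]$. A routine long-exact-sequence computation gives $\hom^0_X(E,E)=1$, $\hom^{-1}_X(E,E)=\hom^{3}_X(E,E)=1$ and $\hom^1_X(E,E)=0$, while $v(E)=0$; so $\langle v(E)\rangle^2+2 = 2 \ne 0 = \hom^1_X(E,E)$, and the asserted identity fails even though $E$ is simple. Your argument therefore needs the additional hypothesis $\Hom^{n}_X(E,E)=0$ for $n<0$, which holds automatically whenever $E$ is a sheaf or, more generally, lies in a shift of the heart of a bounded t-structure --- and this covers every use of the lemma in the paper (spherical and semi-rigid sheaves, $\sigma$-semistable objects). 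State that vanishing explicitly and note where it comes from; with that proviso inserted, the rest of your two-line computation is correct.
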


If, for $E \in D(X)$, $\hom ^1(E,E)=2$, $E$ is said to be \textit{semi-rigid}.  
Assume that $\hom^0(E,E)=1$. 
Then by the above lemma, $\< v(E) \>^2=0$ if and only if $E$ is semi-rigid.

\subsection{Construction of $U(X)$}

Next, following Bridgeland, we define a special subset $U(X)$ of $\Stab (X)$ and give two descriptions of $U(X)$. 
Put 
\[
\mr{NS}(X)_{\bb R} := \mr{NS}(X)\otimes _{\bb Z} \bb R \mbox{ and }
\mr{Amp}(X)_{\bb R} := \{ \omega \in \mr{NS}(X)_{\bb R} | \omega\mbox{ is ample} \}.
\] 
We first define the subset $\mca V(X)$ of $\mr{NS}(X)_{\bb R} \times \mr{Amp}(X)_{\bb R}$ by
\begin{multline*}
\mca V(X):= \{ (\beta,\omega) \in \mr{NS}(X)_{\bb R} \times \mr{Amp}(X)_{\bb R}  | \\ \mbox{ $\forall$ $\delta \in \Delta^+(X)$}, 
\langle \exp ( \beta+ \sqrt{-1}\omega ), \delta  \rangle \not\in \bb R_{\leq 0}\}, 
\end{multline*}
where $\Delta ^+(X)=\{ r\+ \Delta \+ s \in \mca N(X) | \< r\+ \Delta \+ s \>^2 = -2 \mbox{ and }r>0   \}$. 
If $\omega ^2 > 2$ then $(\beta,\omega) \in \mca V(X)$ for all $\beta \in \mr{NS}(X)_{\bb R}$. 
Hence $\mca V(X) \neq \emptyset$. 
Thus we define 
\[
\mca V(X)_{>2} := \{ (\beta,\omega ) \in \mca V(X) | \omega ^2 > 2 \}. 
\]
We can define a torsion pair $(\mca T_{(\beta,\omega)}, \mca F_{(\beta,\omega)})$ (See below) of $\mr{Coh}(X)$ by using a pair $(\beta,\omega) \in \mr{NS}(X)_{\bb R} \times \mr{Amp}(X)_{\bb R}$. 
As a consequence we have a new heart of the bounded t-structure which comes from the torsion pair $(\mca T_{(\beta,\omega)}, \mca F_{(\beta,\omega)})$. 

\begin{lem}\label{2.1}(\cite[Lemma\ 6.1]{Bri2}) 
%%以降、reference番号の前のアルファベットを省略する。
Let $\beta \in \mr{NS}(X)_{\bb R}$ and $\omega \in \mr{Amp}(X)_{\bb R}$. 
We define respectively $\mca T_{(\beta,\omega )}$, $\mca F_{(\beta,\omega )}$ and 
$\mca A_{(\beta,\omega)}$ by 
\begin{eqnarray*}
\mca T_{(\beta,\omega )} &:=& \{ E \in \mr{Coh}(X) | E\mbox{ is a torsion sheaf or }
\mu _{\omega}^-(E/\mr{torsion} ) > \beta \omega  \},  \\
\mca F_{(\beta,\omega)} &:=& \{ E \in \mr{Coh}(X) | E\mbox{ is torsion free and } 
\mu _{\omega}^+ (E) \leq \beta \omega  \},
\end{eqnarray*}
and
\[
\mca A_{(\beta,\omega )} :=   \{ E^{\bullet} \in D(X)| 
H^i(E^{\bullet})	\begin{cases}
					\in \mca T_{(\beta,\omega)} & (i=0) \\ 
					\in \mca F_{(\beta,\omega)} & (i=-1) \\ 
					= 0 & (i \neq 0,-1) 
					\end{cases}
\}.
\]

$(1)$ The pair $(\mca T_{(\beta,\omega)}, \mca F_{(\beta,\omega)} )$ is a torsion pair of $\mr{Coh}(X)$. 

$(2)$ $\mca A_{(\beta,\omega)}$ is the heart of the bounded t-structure determined by the torsion pair $(\mca T_{(\beta,\omega)}, \mca F_{(\beta,\omega)} )$. 
\end{lem}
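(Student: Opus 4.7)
The plan is to verify the two torsion-pair axioms directly for part (1), and then obtain part (2) as a formal consequence of the tilting construction of Happel-Reiten-Smal\o.

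For the Hom-vanishing axiom, I would suppose $T \in \mca T_{(\beta,\omega)}$, $F \in \mca F_{(\beta,\omega)}$ and $\varphi : T \to F$ is a nonzero morphism, and derive a contradiction. Since $F$ is torsion-free, $\varphi$ kills the torsion part of $T$, so it factors through $T' := T/\mathrm{tors}(T)$, which by hypothesis satisfies $\mu^-_\omega(T') > \beta\omega$. Let $I$ denote the image of $\varphi$. As a nonzero subsheaf of $F$, $I$ satisfies $\mu_\omega(I) \leq \mu^+_\omega(F) \leq \beta\omega$; as a nonzero torsion-free quotient of $T'$, it satisfies $\mu_\omega(I) \geq \mu^-_\omega(T') > \beta\omega$. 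This is the desired contradiction.

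For the decomposition axiom, given $E \in \mr{Coh}(X)$ let $E_t$ denote its torsion subsheaf and take the $\mu_\omega$-HN filtration of $E/E_t$ with semistable factors of strictly decreasing slopes $\mu_1 > \cdots > \mu_n$. Let $k$ be the largest index for which $\mu_k > \beta\omega$ (set $k = 0$ if no such index exists), let $T' \subseteq E/E_t$ be the corresponding step of the HN filtration, and let $T \subseteq E$ be its preimage. By construction $T \in \mca T_{(\beta,\omega)}$ and $F := E/T$ is torsion-free with $\mu^+_\omega(F) = \mu_{k+1} \leq \beta\omega$, so $F \in \mca F_{(\beta,\omega)}$, yielding the required short exact sequence.

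Part (2) is then the standard tilting theorem of Happel-Reiten-Smal\o: to any torsion pair $(\mca T, \mca F)$ in the heart $\mca A$ of a bounded t-structure on $\mca D$, there corresponds a new heart consisting of those $E^{\bullet} \in \mca D$ with $H^0(E^{\bullet}) \in \mca T$, $H^{-1}(E^{\bullet}) \in \mca F$ and $H^i(E^{\bullet}) = 0$ otherwise. Specializing to $\mca A = \mr{Coh}(X)$ with the torsion pair from (1) yields exactly $\mca A_{(\beta,\omega)}$, so no further work is needed. The only substantive step is the Hom-vanishing in (1), and I expect that to be the main obstacle; however the argument above is short once one uses the characterization of $\mu^-_\omega$ as a minimum over torsion-free quotients, which also implicitly relies on the classical existence of $\mu_\omega$-HN filtrations for torsion-free sheaves on the projective surface $X$.
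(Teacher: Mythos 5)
Your proposal is correct, and the paper itself gives no proof of this lemma --- it is quoted verbatim from Bridgeland (\cite[Lemma 6.1]{Bri2}), whose argument is exactly the one you give: Hom-vanishing via the image of a morphism being simultaneously a quotient of slope $> \beta\omega$ and a subsheaf of slope $\leq \beta\omega$, the decomposition via truncating the $\mu_{\omega}$-Harder--Narasimhan filtration of $E/\mathrm{torsion}$ at the slope $\beta\omega$, and part (2) by the Happel--Reiten--Smal{\o} tilting theorem. Nothing further is needed.
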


The condition that $(\beta,\omega) \in \mca V(X)$ is necessary 
when we construct a stability function $Z_{(\beta,\omega)}$ on $\mca A_{(\beta,\omega)}$. 

\begin{prop}\label{p2.2}(\cite{Bri2}) 
For $(\beta,\omega) \in \mca V(X)$, we define the group homomorphism $Z_{(\beta,\omega)}: K(X) \to \bb C$ by 
\[
Z_{(\beta, \omega )}(E):= \langle \exp (\beta + \sqrt{-1}\omega),v(E)  \rangle .
\]
Then $Z_{(\beta,\omega)}$ is a stability function on $\mca A_{(\beta,\omega)}$ with the HN-property. 
Hence the pair $(\mca A_{(\beta,\omega)} ,Z_{(\beta,\omega )} )$ defines a stability condition $\sigma_{(\beta,\omega)}$ on $D(X)$. 
In particular $\sigma_{(\beta,\omega)}$ is numerical and locally finite. 
\end{prop}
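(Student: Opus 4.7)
The plan is to invoke Proposition \ref{p1.5}. Since $\mca A_{(\beta,\omega)}$ is already the heart of a bounded t-structure by Lemma \ref{2.1}, it remains only to verify that $Z_{(\beta,\omega)}$ restricts to a stability function on $\mca A_{(\beta,\omega)}$ with the HN property; numericality is immediate from the fact that $Z_{(\beta,\omega)}$ factors through $v(-)$ and hence through $\mca N(X)$, and local finiteness will fall out of the HN property together with discreteness of the Mukai lattice.

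I would first expand the stability function explicitly in terms of $v(E) = r \+ c_1 \+ s$:
\begin{equation*}
Z_{(\beta,\omega)}(E) = \Bigl(\beta c_1 - s + \tfrac{r}{2}(\omega^2 - \beta^2)\Bigr) + \sqrt{-1}\,\omega (c_1 - r\beta).
\end{equation*}
Every nonzero $E \in \mca A_{(\beta,\omega)}$ sits in a triangle with $H^0(E) \in \mca T_{(\beta,\omega)}$ and $H^{-1}(E)[1]$ (with $H^{-1}(E) \in \mca F_{(\beta,\omega)}$), so by additivity it suffices to check $\mathrm{Im}\, Z_{(\beta,\omega)} \geq 0$ on $\mca T_{(\beta,\omega)}$ and on $\mca F_{(\beta,\omega)}[1]$. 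This is direct from the slope inequalities defining these categories together with the ampleness of $\omega$, which ensures that torsion sheaves satisfy $\omega \cdot c_1 \geq 0$.

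The main content is the boundary case $\mathrm{Im}\, Z_{(\beta,\omega)}(E) = 0$, where I must show $\mathrm{Re}\, Z_{(\beta,\omega)}(E) < 0$. A torsion $E \in \mca T$ with $\omega \cdot c_1(E) = 0$ has zero-dimensional support by ampleness of $\omega$, so $\mathrm{Re}\, Z(E) = -\mathrm{length}(E) < 0$. The crucial case is $E = F[1]$ with $F \in \mca F$; here $\mathrm{Im}\, Z(F) = 0$ combined with $\mu_\omega^+(F) \leq \beta\omega$ forces $F$ to be $\mu$-semistable of slope exactly $\beta\omega$, and I must show $\mathrm{Re}\, Z_{(\beta,\omega)}(F) > 0$. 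Passing to a Jordan--H\"{o}lder filtration in the $\mu$-sense reduces the problem to the analogous statement for a $\mu$-stable factor $G$ of slope $\beta\omega$. Rewriting
\begin{equation*}
\mathrm{Re}\, Z_{(\beta,\omega)}(G) = \frac{\<v(G)\>^2}{2\rank G} + \frac{\rank G}{2}\omega^2 - \frac{(c_1(G) - \rank G\cdot\beta)^2}{2\rank G},
\end{equation*}
and using that $(c_1(G) - \rank G\cdot\beta)\cdot\omega = 0$, the Hodge index theorem makes the last term non-negative. Mukai's inequality $\<v(G)\>^2 \geq -2$ for simple sheaves on a K3 (i.e.\ the Lemma in Section 3.1) then leaves two possibilities: if $\<v(G)\>^2 \geq 0$ then $\mathrm{Re}\, Z(G) \geq \tfrac{\rank G}{2}\omega^2 > 0$; if $\<v(G)\>^2 = -2$ then $v(G) \in \Delta^+(X)$ and $Z_{(\beta,\omega)}(G) \in \bb R$, so $\mathrm{Re}\, Z(G) > 0$ is forced by the defining condition of $\mca V(X)$. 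This spherical sub-case is the principal obstacle and is exactly what the hypothesis $(\beta,\omega) \in \mca V(X)$ is tailored to rule out.

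For the HN property I would invoke Bridgeland's general criterion (\cite[Section 7]{Bri}): the tilted heart $\mca A_{(\beta,\omega)}$ inherits noetherianity from $\mr{Coh}(X)$ by a standard argument, and the image of $Z_{(\beta,\omega)}$ on effective classes of $\mca A_{(\beta,\omega)}$ lies in a discrete subset of $\bb C$ since $v(-)$ takes values in the lattice $\mca N(X)$; together these yield HN filtrations. The same discreteness gives local finiteness, so the resulting stability condition $\sigma_{(\beta,\omega)}$ lies in $\Stab(X)$ and the proposition follows.
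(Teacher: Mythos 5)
The paper does not prove this proposition at all: it is quoted from \cite{Bri2} (Lemma 6.2 and Proposition 7.1 there), so there is no in-paper argument to compare against. Your reconstruction of the stability-function part is essentially Bridgeland's own proof and is correct: the expansion of $Z_{(\beta,\omega)}$, the reduction of positivity of the imaginary part to the defining slope inequalities of $\mca T_{(\beta,\omega)}$ and $\mca F_{(\beta,\omega)}$, and the treatment of the boundary case via the Hodge index theorem, Mukai's inequality $v(G)^2\geq -2$ for the $\mu$-stable factors, and the $\mca V(X)$-condition to exclude the spherical case are exactly the right chain of ideas, and you correctly identify the spherical sub-case as the reason $\mca V(X)$ is defined the way it is. (One small point worth making explicit: the filtration of a $\mu$-semistable torsion-free sheaf of slope $\beta\omega$ with genuinely torsion-free $\mu$-stable factors of the same slope needs a word — take a saturated subsheaf of minimal rank among those of slope $\beta\omega$ — since arbitrary quotients need not be torsion free.)

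The genuine gap is in the last paragraph. You claim that the image of $Z_{(\beta,\omega)}$ on effective classes is discrete because $v(-)$ takes values in the lattice $\mca N(X)$. This is false in general: $\mca V(X)$ allows $\beta$ and $\omega$ to be arbitrary real classes, and the image of a lattice under a real-linear map to $\bb C$ need not be discrete (already $\bb Z^2\ni(a,b)\mapsto a+b\sqrt 2$ has dense image in $\bb R$). Discreteness, and hence your route to both the HN property and local finiteness, only works when $\beta,\omega\in\mr{NS}(X)\otimes\bb Q$. Bridgeland's actual argument proves noetherianity of $\mca A_{(\beta,\omega)}$ and the HN property first for rational $(\beta,\omega)$, and then obtains the full set $V(X)$ (with real parameters) from the rational points by the deformation theory of stability conditions, i.e.\ the local homeomorphism $\pi:\Stab(X)\to\mca N(X)\otimes\bb C$, together with the $\tilde{GL}^+(2,\bb R)$-action. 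Some extra input of this kind (or a more careful chain-condition argument that does not rely on discreteness of $\Im Z$) is needed to close your proof for irrational $\beta,\omega$.
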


%\begin{proof}
%By \cite[Lemma 6.2, and Proposition 7.2]{Bri2}. 
%$Z_{(\beta,\omega)}$ is the stability function with HN property on $\mca A_{(\beta, \omega)}$.   
%Hence, by Proposition \ref{p1.5}, the pair $(\mca A_{(\beta,\omega )}, Z_{(\beta,\omega)}  )$ 
%defines the stability condition $\sigma_{(\beta,\omega )}$ on $D(X)$. 
%\end{proof}

Here we put 
\[
V(X) := \{ \sigma_{(\beta,\omega )} | (\beta,\omega ) \in \mca V(X) \} 
\mbox{ and }
V(X)_{> 2} := \{ \sigma_{(\beta,\omega )} | (\beta,\omega ) \in \mca V(X)_{>2} \}.
\]

The most important property of $\sigma \in V(X)$ is the $\sigma $-stability of the structure sheaves $\mca O_x$ 
of closed points $x$ of $X$.  

\begin{prop}\label{2.3}(\cite[Lemma\ 6.3]{Bri2}) 
Let $x\in X$. 
Then $\mca O_x$ is minimal in $\mca A_{(\beta,\omega)}$ for any $(\beta,\omega ) \in \mca V(X)$. 
Namely $\mca O_x$ does not have non-trivial subobjects in $\mca A_{(\beta, \omega)}$. 
In particular $\mca O_x$ is $\sigma$-stable with phase $1$ for any $\sigma \in V(X)$.  
\end{prop}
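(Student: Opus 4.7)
The plan is to show that any short exact sequence $0 \to A \to \mca O_x \to B \to 0$ in $\mca A_{(\beta,\omega)}$ forces $A = 0$ or $A = \mca O_x$, from which the second half of the proposition follows by a short computation of the phase. Taking the long exact sequence of cohomology (relative to the standard t-structure on $D(X)$) and using that $\mca O_x$ is concentrated in degree $0$, I obtain $H^{-1}(A) = 0$---so $A$ is a genuine sheaf belonging to $\mca T_{(\beta,\omega)}$---together with a four-term exact sequence
\[
0 \to H^{-1}(B) \to A \xrightarrow{f} \mca O_x \to H^0(B) \to 0
\]
in $\mr{Coh}(X)$, where $H^{-1}(B) \in \mca F_{(\beta,\omega)}$ and $H^0(B) \in \mca T_{(\beta,\omega)}$.

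Since $\mca O_x$ is a simple object of $\mr{Coh}(X)$, the map $f$ is either zero or surjective. If $f=0$, then $A \cong H^{-1}(B)$ lies in $\mca T_{(\beta,\omega)} \cap \mca F_{(\beta,\omega)}$; this intersection is trivial since any nonzero sheaf in it would simultaneously be torsion-free with $\mu_\omega^-(\cdot) > \beta\omega$ and $\mu_\omega^+(\cdot) \leq \beta\omega$, which is impossible. If $f$ is surjective, set $K := \ker f = H^{-1}(B)$, which is torsion-free with $\mu_\omega^+(K) \leq \beta\omega$; I must show $K = 0$. The composition $K \hookrightarrow A \twoheadrightarrow A/T(A) =: A'$ is injective because $K$ is torsion-free, so $K \hookrightarrow A'$. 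If $A$ is pure torsion then $A' = 0$ and already $K=0$. Otherwise $A' \neq 0$, and $A \in \mca T_{(\beta,\omega)}$ forces $\mu_\omega^-(A') > \beta\omega$. Applying the standard Harder--Narasimhan fact that any nonzero subsheaf of a torsion-free sheaf $A'$ satisfies $\mu_\omega^+(\cdot) \geq \mu_\omega^-(A')$ (obtained by projecting onto the lowest HN factor of $A'$), I conclude $\mu_\omega^+(K) > \beta\omega$, contradicting $K \in \mca F_{(\beta,\omega)}$. Hence $K = 0$ and $A = \mca O_x$.

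For the stability assertion, a direct computation using $v(\mca O_x) = 0 \+ 0 \+ 1$ and the Mukai pairing gives
\[
Z_{(\beta,\omega)}(\mca O_x) = \langle \exp(\beta + \sqrt{-1}\omega), v(\mca O_x)\rangle = -1,
\]
so the phase of $\mca O_x$ is $1$; combined with its minimality in $\mca A_{(\beta,\omega)} = \mca P((0,1])$, this immediately yields $\sigma$-stability. The main subtlety is the mixed-torsion case of the kernel argument: aligning membership in the torsion pair $(\mca T_{(\beta,\omega)},\mca F_{(\beta,\omega)})$ with the HN inequalities on slopes, which I expect to handle cleanly by funneling everything through the embedding $K \hookrightarrow A/T(A)$ and the HN-projection principle.
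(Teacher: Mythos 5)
The paper itself gives no proof of this proposition (it is quoted from \cite[Lemma 6.3]{Bri2}), and your overall strategy --- take a short exact sequence $0 \to A \to \mca O_x \to B \to 0$ in the heart, pass to the long exact cohomology sequence to get $H^{-1}(A)=0$ and the four-term sequence, use simplicity of $\mca O_x$ in $\mr{Coh}(X)$, and rule out a nonzero kernel $K = H^{-1}(B) \in \mca F_{(\beta,\omega)}$ --- is the standard one and structurally sound; the computation $Z_{(\beta,\omega)}(\mca O_x) = -1$ and the case $f=0$ are also correct. The genuine problem is the ``standard Harder--Narasimhan fact'' you invoke at the decisive step: it is \emph{false} that every nonzero subsheaf $K$ of a torsion-free sheaf $A'$ satisfies $\mu^+_\omega(K) \geq \mu^-_\omega(A')$. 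For example $\mca O_X$ is $\mu$-stable, so $\mu^-_\omega(\mca O_X)=0$, yet the subsheaf $\mca O_X(-D) \subset \mca O_X$ cut out by an effective divisor $D$ has $\mu^+_\omega = -D\cdot\omega < 0$. Subsheaves only satisfy the upper bound $\mu^+_\omega(K) \leq \mu^+_\omega(A')$; the lower bound by $\mu^-_\omega(A')$ holds for torsion-free \emph{quotients}, not subsheaves, and ``projecting onto the lowest HN factor'' yields $\mu^-_\omega(K) \leq \mu^-_\omega(A')$, which points in the wrong direction for your purposes.

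The step is salvageable because $K$ is not an arbitrary subsheaf: since $A/K \cong \mca O_x$, the cokernel of $K \hookrightarrow A' = A/T(A)$ is a quotient of $\mca O_x$ and hence supported in dimension zero, so $\rank K = \rank A'$ and $c_1(K) = c_1(A')$. Therefore $\mu^+_\omega(K) \geq \mu_\omega(K) = \mu_\omega(A') \geq \mu^-_\omega(A') > \beta\omega$, contradicting $\mu^+_\omega(K) \leq \beta\omega$. (Your disposal of the case where $A$ is pure torsion is fine; alternatively one can note there that a torsion-free $K$ inside a torsion sheaf is zero.) With that one justification replaced, the argument is complete.
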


\begin{rmk}\label{2.4}
Let $\sigma_{(\beta,\omega)} =(Z,\mca P) \in V(X)$. 

(1) 
By Proposition \ref{2.3} and \cite[Lemma 10.1]{Bri2}, any sheaf $F \in \mr{Coh}(X)$ is in $\mca P((-1,1])$. 
In addition to Proposition \ref{2.3}, if $E \in D(X)$ is $\sigma_{(\beta,\omega)}$-stable with phase $1$ 
then $E$ is $\mca O_x$ for some $x \in X$ or $\mca E[1]$ where $\mca E$ is a locally free sheaf. 
In particular, there is no torsion free $\sigma$-semistable sheaf of phase $1$. 

(2) 
As we stated, $\mr{Coh}(X)$ is a full subcategory of $\mca P((-1,1])$. 
Moreover by Proposition \ref{2.3}, we have 
\begin{equation}
\mca T_{(\beta,\omega)} = \mca P((0,1]) \cap \mr{Coh}(X) \mbox{, and }
\mca F_{(\beta,\omega)} = \mca P((-1,0]) \cap \mr{Coh}(X) \label{torsionpair} . 
\end{equation} 
This fact is proved in Step 2 of the proof of \cite[Proposition 10.3]{Bri2}. 
Now, assume that a torsion free sheaf $E$ is $\mu$-semistable for $\omega$. 
Then by (\ref{torsionpair}):
\[
E \in 	\begin{cases}
					\mca T_{(\beta,\omega)} & (\mbox{if }\mu_{\omega}(E) > \beta \omega) \\
					\mca F_{(\beta,\omega)} & (\mbox{if }\mu_{\omega}(E) \leq \beta \omega ) . 
		\end{cases}
\]
\end{rmk}

We define
\[
U(X) := V(X) \cdot \tilde{GL}^+(2,\bb R)
\mbox{ and }
U(X)_{>2} := V(X)_{>2} \cdot \tilde{GL}^+(2,\bb R).
\]
We remark that the action of $\tilde {GL}^+(2,\bb R)$ on $U(X)$ is transitive. 
%Moreover $U(X)$ is isomorphic to $ V(X) \times  \tilde{GL}^+(2,\bb R)$. 
%Namely $U(X)$ has a trivial $\tilde {GL}^+(2,\bb R)$ structure on $V(X)$. 
Since $V(X)$ is connected, $U(X)$ is also connected. 
This is the concrete definition of $U(X)$. 
Conversely we shall give an abstract definition of $U(X)$. 
To do this, we define the notion of good stability conditions. 

For $\mho \in \mca N(X)\otimes \bb C$, we have $\mho = \mho _R + \sqrt{-1} \mho _{I}$ where $\mho _{R}$ and $\mho _I $ are in $\mca N(X)\otimes \bb R$. 
Let $P(X)$ be the set of vectors $\mho \in \mca N(X) \otimes \bb C$
such that Mukai pairing is positive definite on the real $2$-plane spanned by $\mho_{R}$ and $\mho _{I}$. 
Let $\Delta(X)$ be the subset of $\mca N (X)$ defined by
\[
\Delta (X) := \{ \delta \in \mca N(X) | \< \delta \> ^2 =-2  \}. 
\]
We define $P_0(X)$ by 
\[
P_0(X) := P(X)-\bigcup_{\delta \in \Delta(X) }\delta ^{\perp},
\]
 where $\delta^{\perp}= \{ \mho \in \mca N(X) \otimes \bb C | \< \mho, \delta \> =0  \}$. 

\begin{dfn}\label{2.5}
A stability condition $\sigma \in \Stab (X)$ is said to be \textit{good}, if 
$\pi (\sigma) \in P_0(X)$. 
\end{dfn}

\begin{prop}\label{2.6}(\cite[Proposition\ 10.3]{Bri2}) 
We have 
\[
U(X) =\{ \sigma \in \Stab (X) | \sigma \mbox{ is good and }\forall \mca O_x\mbox{ is $\sigma$-stable in a common phase}.  \}.
\]
\end{prop}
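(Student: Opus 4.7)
The plan is to verify both inclusions, treating the concrete formula $U(X)=V(X)\cdot\tilde{GL}^+(2,\bb R)$ as the working definition and matching it against the abstract characterization on the right. A preliminary reduction is that both properties on the right---goodness and common phase of all $\mca O_x$---are preserved by the right $\tilde{GL}^+(2,\bb R)$ action, by Remark~\ref{r1.7} and the fact that this action transforms $\pi(\sigma)$ by an automorphism of the oriented real 2-plane it spans in $\mca N(X)\otimes\bb R$. Hence it suffices to verify the forward inclusion at the representatives $\sigma_{(\beta,\omega)}\in V(X)$.

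For such $\sigma_{(\beta,\omega)}$, the common-phase condition is Proposition~\ref{2.3}. For goodness, I would compute $\pi(\sigma_{(\beta,\omega)})$ directly from the defining formula $Z_{(\beta,\omega)}(E)=\langle\exp(\beta+\sqrt{-1}\omega),v(E)\rangle$, identifying it with $-\exp(\beta+\sqrt{-1}\omega)$ up to the sign inherited from $\chi_{\mca N}=-\langle-,-\rangle$. Its real and imaginary parts $\mho_R=(1,\beta,(\beta^2-\omega^2)/2)$ and $\mho_I=(0,\omega,\beta\omega)$ satisfy $\langle\mho_R,\mho_R\rangle=\langle\mho_I,\mho_I\rangle=\omega^2>0$ with vanishing mixed pairing, placing $\pi(\sigma_{(\beta,\omega)})$ in $P(X)$. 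To rule out $\pi(\sigma_{(\beta,\omega)})\in\delta^\perp$ for $\delta\in\Delta(X)$, the defining inequality of $\mca V(X)$ (together with linearity under $\delta\mapsto-\delta$) handles $\delta\in\Delta^+(X)\cup(-\Delta^+(X))$, while the rank-zero case $\delta=(0,\Delta,s)$ reduces via ampleness of $\omega$ and the Hodge index theorem to the same inequality applied to a suitable positive-rank representative.

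For the reverse inclusion, starting from a good $\sigma=(Z,\mca P)\in\Stab(X)$ with every $\mca O_x$ being $\sigma$-stable of common phase, I would first use an element of $\tilde{GL}^+(2,\bb R)$ to normalize the common phase to $1$ and to bring $\pi(\sigma)$ into the canonical form $-\exp(\beta+\sqrt{-1}\omega)$ for some $\beta\in\mr{NS}(X)_{\bb R}$ and $\omega\in\mr{NS}(X)_{\bb R}$. Existence of such a pair is a linear-algebra consequence of the fact that $\pi(\sigma)$ determines an oriented positive-definite $2$-plane in $\mca N(X)\otimes\bb R$; the $\sigma$-stability of the $\mca O_x$ further forces $\omega$ to lie in $\mr{Amp}(X)_{\bb R}$ rather than merely the positive cone. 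The hypothesis $\pi(\sigma)\in P_0(X)$ then translates into $(\beta,\omega)\in\mca V(X)$, so that $\sigma_{(\beta,\omega)}\in V(X)$ is a well-defined candidate with $\pi(\sigma_{(\beta,\omega)})=\pi(\sigma)$.

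The main obstacle is the final step: concluding $\sigma=\sigma_{(\beta,\omega)}$, not merely that they lie in the same $\pi$-fibre. The strategy is to prove equality of the hearts. Because every $\mca O_x$ has phase $1$ under $\sigma$, one argues as in Step~2 of the proof of \cite[Proposition~10.3]{Bri2} that $\mr{Coh}(X)\subset\mca P((-1,1])$---torsion sheaves are iterated extensions of skyscrapers and so land in $\mca P((0,1])$, while torsion-free sheaves are controlled by their Harder-Narasimhan filtrations together with the rigidity of the $\mca O_x$---and then identifies the induced torsion pair on $\mr{Coh}(X)$ with $(\mca T_{(\beta,\omega)},\mca F_{(\beta,\omega)})$ as displayed in (\ref{torsionpair}), using that the $Z$-phase of a $\mu$-semistable sheaf detects the same wall as the slope $\mu_\omega$ against the value $\beta\omega$. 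Equal hearts plus equal central charges then force $\sigma=\sigma_{(\beta,\omega)}$, completing the reverse inclusion.
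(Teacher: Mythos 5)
First, a point of comparison: the paper itself gives no proof of Proposition \ref{2.6} --- it is quoted verbatim from \cite[Proposition 10.3]{Bri2}, and the text immediately afterwards notes that in \cite{Bri2} the right-hand side is taken as the \emph{definition} of $U(X)$. So what you have written is, in effect, a reconstruction of Bridgeland's proof, and in broad outline it does track that proof: reduce to representatives in $V(X)$ by $\tilde{GL}^+(2,\bb R)$-equivariance of both sides; verify goodness and skyscraper stability there; and for the converse, normalize, read off $(\beta,\omega)$ from $\pi(\sigma)$, and identify the heart. The genuinely hard content --- that all of $\mr{Coh}(X)$ lies in $\mca P((-1,1])$ and that the induced torsion pair is $(\mca T_{(\beta,\omega)},\mca F_{(\beta,\omega)})$ --- you outsource to ``Step 2 of \cite[Proposition 10.3]{Bri2}'', which is the very proof you are reconstructing, so your argument is an outline of how to read Bridgeland rather than an independent proof.

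There is one step that is wrong as stated. You claim that ``the hypothesis $\pi(\sigma)\in P_0(X)$ then translates into $(\beta,\omega)\in\mca V(X)$.'' It does not: membership in $P_0(X)$ only excludes $\langle \exp(\beta+\sqrt{-1}\omega),\delta\rangle =0$, whereas $\mca V(X)$ excludes the entire ray $\bb R_{\leq 0}$. These are genuinely different conditions. For instance, with $\rank X$'s Picard number equal to $1$, $\beta=nL$ with $n\in\bb Z$, $\omega=yL$ with $0<2dy^2<2$, and $\delta=v(\mca O_X(nL))$, one computes $\langle\exp(\beta+\sqrt{-1}\omega),\delta\rangle=dy^2-1\in\bb R_{<0}$, so $(\beta,\omega)\notin\mca V(X)$ even though $\exp(\beta+\sqrt{-1}\omega)$ generically lies in $P_0(X)$; such points are exactly the walls at which $U(X)$ ends inside $\Stabd(X)$. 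Consequently your ``well-defined candidate $\sigma_{(\beta,\omega)}$'' is not yet well defined at that stage of the argument. The repair is to reverse the order, as Bridgeland does: first establish the heart identification $\mca P((0,1])=\mca A_{(\beta,\omega)}$, then observe that $Z$ is a stability function on this heart, which (via \cite[Lemma 6.2]{Bri2}) is what yields $(\beta,\omega)\in\mca V(X)$. A smaller inaccuracy: in the forward direction the rank-zero spherical classes $\delta=(0,\Delta,s)$ are excluded not by Hodge index or by passing to a positive-rank representative, but because $\Delta^2=-2$ forces $\pm\Delta$ effective (Riemann--Roch on a K3), whence $\omega\Delta\neq 0$ for $\omega$ ample and the imaginary part of $\langle\exp(\beta+\sqrt{-1}\omega),\delta\rangle$ is already nonzero.
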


In \cite{Bri2}, $U(X)$ is defined by the right hand side of Proposition \ref{2.6}. 
Define $\Stabd (X)$ by the unique connected component containing $U(X)$.

\subsection{Gieseker stability and Fourier-Mukai partners}

The last topic of Section 3 is a review of Gieseker stability. 
The details are in \cite{HL}. 
Let $E$ be a torsion free sheaf on a K3 surface $X$ 
and $p(E)$ the reduced Hilbert polynomial for an ample divisor $L$:
\[
p(E) = \frac{\chi (\mca O_X, E \otimes n L)}{\rank E} = \frac{\chi (-n L,E)}{\rank E} \in \bb Q[n]  .
\]
Using the Mukai vector $v(E)= r_E \+ \Delta _E \+ s_E$ of $E$, we write down $p(E)$:
\begin{eqnarray}
p(E)	&=&	-\frac{ \< v(-n L) ,v(E) \>  }{r_E} \notag \\
		&=&	\frac{L^2}{2}n^2 + \frac{\Delta. L}{r_E}n + \frac{s_E}{r_E} +1\label{Hilbert}.
\end{eqnarray}

A torsion free sheaf $E$ is called a \textit{Gieseker semistable} sheaf 
if, for any non-trivial subsheaf $A$, $p(A) \leq p(E)$ as polynomial. 
In particular, $E$ is called a \textit{Gieseker stable} sheaf when the strict inequality $p(A) < p (E)$ holds. 
For a torsion free sheaf $E$, we can easily check the following well known fact by the formula (\ref{Hilbert}):
\begin{center}
$\mu $-stable $\Rightarrow $ Gieseker stable $\Rightarrow $ Gieseker semistable $\Rightarrow $ $\mu$-semistable.
\end{center}

Let $\mca M_{L}(v)$ be the moduli space of Gieseker stable torsion free sheaves 
with Mukai vector $v = r \+ \Delta \+ s$. 
If $v$ is primitive in $\mca N(X)$, then $\mca M_{L}(v)$ is projective. 

By the result of \cite{HLOY} or \cite{Ste}, 
we have a beautiful description of Fourier-Mukai partners of $X$ 
when the Picard number of $X$ is $1$. 
Let us recall it.

\begin{thm}(\cite[Theorem\ 2.1]{HLOY}, \cite{Ste})\label{3.3}
Let $X$ be a projective K3 surface with $\mr{NS}(X) = \bb Z \cdot L$ where $L$ is an ample line bundle on $X$, and 
let $\mr{FM}(X)$ be the set of isomorphic classes of Fourier-Mukai partners of $X:$
\[
\mr{FM}(X) = \{ Y | Y\mbox{ is a projective K3 surface and }D(Y) \sim D(X) \} /\sim _{\mr{isom}}.
\]
Then $\mr{FM}(X)$ is given by 
\[
\mr{FM}(X) =\{ \mca M _L (r \+ L \+ s) | 2rs = L^2,\ \gcd (r,s)=1,\ r\leq s  \}. 
\]
\end{thm}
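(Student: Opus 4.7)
The plan is to combine Mukai's realization of Fourier--Mukai partners of K3 surfaces as moduli spaces of stable sheaves with a careful orbit analysis of primitive isotropic Mukai vectors under the autoequivalence group.

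For the containment $\supseteq$, fix a pair $(r,s)$ with $2rs = L^2$, $\gcd(r,s) = 1$, and $r \leq s$, and set $v = r \+ L \+ s$. Then $\< v \>^2 = L^2 - 2rs = 0$ and $v$ is primitive because its $\mr{NS}$-coordinate is $L$ itself. By Mukai's theorem, $\mca M_L(v)$ is a smooth projective K3 surface (any ample divisor is automatically $v$-generic because $\mr{NS}(X)$ has rank one, so there are no walls to worry about). The condition $\gcd(r,s) = 1$ gives $\gcd(r, L^2, s) = \gcd(r, 2rs, s) = 1$, which is precisely the Mukai divisibility condition guaranteeing that a universal sheaf exists on $\mca M_L(v) \times X$; this universal family then induces an equivalence $D(\mca M_L(v)) \sim D(X)$.

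For the containment $\subseteq$, let $Y$ be a FM partner of $X$ and fix an equivalence $\Phi: D(Y) \to D(X)$. The Mukai vector of $\Phi(\mca O_y)$ for $y \in Y$ is a primitive isotropic class $v = r \+ nL \+ s \in \mca N(X)$ (with $n \in \bb Z$ since $\mr{NS}(X) = \bb Z L$), satisfying $n^2 L^2 = 2rs$, and Mukai identifies $Y$ with $\mca M_L(v)$. I would then normalize $v$ using the action of $\Aut(D(X))$ on $\mca N(X)$, noting that composing $\Phi$ with an autoequivalence changes $v$ but not the isomorphism class of $Y$. Tensoring by $L^{\otimes k}$ sends $v$ to $v \cdot \exp(kL)$, whose middle coordinate is $(n + kr)L$, so $n$ can be reduced modulo $r$. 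The shift composed with the spherical twist $T_{\mca O_X}$ realizes $(r, nL, s) \leftrightarrow (s, -nL, r)$, allowing one to assume $r \leq s$. Iterating these moves, together with further spherical twists chosen from the rank-one lattice structure, collapses $v$ to the canonical form $r \+ L \+ s$.

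The main obstacle is executing this orbit analysis rigorously: one must show that every primitive isotropic class in $\mca N(X)$ lies in the $\Aut(D(X))$-orbit of some $r \+ L \+ s$ with $2rs = L^2$ and $\gcd(r,s) = 1$, and, conversely, that two distinct such pairs $(r, s)$ with $r \leq s$ give non-isomorphic K3 surfaces. The first point reduces to a number-theoretic statement about primitive integer solutions of $n^2 d = rs$ modulo the arithmetic group generated by the above moves; the second follows from comparing Mukai vectors of skyscrapers under any hypothetical isomorphism. This arithmetic core is precisely the content of the arguments of \cite{HLOY} and \cite{Ste}.
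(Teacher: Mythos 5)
The paper does not prove this statement at all: Theorem \ref{3.3} is quoted verbatim from \cite[Theorem~2.1]{HLOY} and \cite{Ste} and is used as a black box, so there is no internal proof to compare your attempt against. Judged on its own, your proposal is a reasonable roadmap of the standard argument, and the $\supseteq$ direction is essentially complete: isotropy and primitivity of $v=r\+L\+s$, the identity $\gcd(r,L^2,s)=\gcd(r,2rs,s)=\gcd(r,s)=1$ giving fineness, and Mukai's theorem that the universal sheaf induces an equivalence are all correct (genericity of the polarization is indeed vacuous at Picard rank one).

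The $\subseteq$ direction, however, has a genuine gap at its very first step. You assert that for an equivalence $\Phi:D(Y)\to D(X)$ the class $v=v(\Phi(\mca O_y))$ is a primitive isotropic vector and that ``Mukai identifies $Y$ with $\mca M_L(v)$.'' The first half is fine, but the second is not something you can invoke: $\Phi(\mca O_y)$ is in general a complex, not a Gieseker-stable sheaf, so $Y$ is not a priori a moduli space for that particular $v$ (indeed the present paper's Sections 5--6 are devoted precisely to the failure of images of point sheaves to be sheaves). Making this step rigorous requires either Orlov's representability together with the derived Torelli theorem (reducing to a Hodge isometry of transcendental lattices and then invoking surjectivity of the period map to realize $Y$ as a moduli space for a suitably chosen isotropic vector), or a direct argument that $\Phi$ can be corrected by autoequivalences until point sheaves go to stable sheaves; neither is supplied. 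The second gap is the arithmetic normalization itself: you must show every primitive isotropic $r\+nL\+s$ (including the degenerate cases $r=0$ or $n\le 0$) lies in the orbit of some $r'\+L\+s'$ with $r's'=L^2/2$ and $\gcd(r',s')=1$ under the moves you list. You correctly compute the two basic moves (tensoring by $L^{\otimes k}$ and $T_{\mca O_X}[1]$ acting as $(r,nL,s)\mapsto(s,-nL,r)$), but the reduction-theory argument that these suffice is exactly the content of \cite{HLOY} and \cite{Ste}, and deferring it means the containment $\subseteq$ is not actually established. (A minor additional remark: the theorem as stated only asserts a set equality, so pairwise non-isomorphy of the $\mca M_L(r\+L\+s)$ is not needed, though it does hold and gives the count $2^{p(d)-1}$.)
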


We remark that $\mca M _L (r \+ L \+ s)$ is the fine moduli space of $\mu$-stable sheaves, since 
$\mr{NS}(X) = \bb Z \cdot L$.

\section{$\sigma$-stability of $\mu$-stable semi-rigid sheaves}

From this section we mainly consider projective K3 surfaces with Picard number $1$. 
In this article, a pair $(X,L)$ is said to be a \textit{generic K3}, 
if $X$ is a projective K3 surface and $L$ is an ample line bundle which generates $\mr{NS}(X)$. 
We define $\deg X$ by $L^2$ and call it \textit{degree of $X$}. 
We also write the Mukai vector $v(E)$ of $E \in D(X)$ by $r_E \+ \Delta _E \+ s_E $. 
Then we have $r_E = \rank E$, $\Delta _E = c_1(E)$ and $s_E = \chi (\mca O_X,E)-\rank E$. 
Since $\mr{NS}(X) = \bb Z \cdot L$, we can write $\Delta _E  = n_E L$ for some integer $n_E \in \bb Z$. 
So we also write $v(E) = r_E \+ n_E L \+ s_E$. 
 
Our research and results are based on another expression of the function $Z_{(\beta ,\omega)}$, 
where $\sigma _{(\beta ,\omega )} = (Z_{(\beta, \omega )},\mca P_{(\beta,\omega)}) \in V(X)$. 
For $E \in D(X)$, assume that $r_E \neq 0$. 
Then we can rewrite the stability function $Z_{(\beta,\omega)}$ in the following way
\footnote{We wrote the symbols $\<, \>$ till last section. 
From here we will omit them. }:
\begin{equation}
Z_{(\beta ,\omega )}(E) = \frac{v(E)^2}{2r_E} + \frac{r_E}{2}
\Bigl( \omega + \sqrt{-1}\bigl( \frac{\Delta _E}{r_E} -\beta    \bigr) \Bigr)^2 
\label{4.0}.
\end{equation}
%The goal of this section is to show the $\sigma$-stability of $\mu_L$-stable sheaves $E$ on a generic K3 $(X,L)$. 
%Hence we have to compare the slope $\mu_L(E)$ of $E$ with the argument $\arg Z_{(\beta, \omega)}(E)$ of $Z_{(\beta, \omega)}(E)$. 
%The expression (\ref{4.0}) makes a hint to compare them since the second term of (\ref{4.0}) is essentially determined by the slope $\mu_L(E)$ under the assumption that the Picard number is $1$. 
%Hence (\ref{4.0}) plays a key role in our study. 

We introduce a function which will appear in the proofs of Lemmas \ref{4.6} and \ref{5.1}, and in Example \ref{5.3}. 
For a generic K3 $(X,L)$ with degree $2d$, assume that $\sigma_{(\beta, \omega)}= (Z_{(\beta, \omega)}, \mca P_{(\beta, \omega)}) \in V(X)$. We put $(\beta, \omega) =(xL, yL)$. 
Then, for $E \in D(X)$, the imaginary part of $Z_{(\beta,\omega)}(E)$ is 
$2\sqrt{-1} yd \lambda _E $ where $\lambda _E= n_E -r_E x $. 
For $E, A \in D(X)$, we define $N_{A,E}(x,y)$ by
\begin{equation}
N_{A,E}(x,y):=\lambda _E \cdot \mf{Re}Z_{(\beta,\omega)}(A)  - \lambda _A \cdot \mf{Re}Z_{(\beta,\omega)}(E), \label{NAE}
\end{equation}
where $\mf{Re}$ means taking the real part. 

Recall the notion $\arg Z(A) $ for a $\sigma $-semistable object $A$ and $\sigma \in \Stab (X)$ (cf. Remark \ref{r1.2} (2)). 
In general, we can not determine the argument of the complex number $Z(E)$ for an object $E \in D(X)$. 
However if $E \in \mca P((a,a+1 ])$ (for some $a \in \bb R$) then we can determine the argument of $Z(E)$. 
So we denote also it by $\arg Z(E)$, that is, 
$ \phi = \arg Z(E) \stackrel{\mr{def}}{\iff} Z(E)= m \exp (\sqrt{-1}\pi \phi)$ 
for some $m \in \bb R_{>0}$ .

We shall use Lemma \ref{4.1} and Proposition \ref{4.2} to analyze of the maximal (semi)stable factor of Gieseker stable sheaves $E$ when $E \in \mca P((0,1])$ for $\sigma =(Z,\mca P) \in V(X)$. 

\begin{lem}\label{4.1}
Let $(X,L)$ be a generic K3 and $\sigma_{(\beta,\omega)} = (Z, \mca P) \in V(X)$. 
Assume that $A \to E \to F \to A[1]$ is a non-trivial distinguished triangle in $\mca P((0,1])$, 
that is, $A$, $E$ and $F $ are in $\mca P((0,1])$. 

$(1)$ If $E$ is a torsion free sheaf then $A$ is also a torsion free sheaf. 

$(2)$ In addition to $(1)$, assume that $E$ is a Gieseker stable sheaf. 
If $\arg Z(E) \leq \arg Z(A) < 1 $, then $\mu_{\omega }(A) < \mu_{\omega}(E)$. 
\end{lem}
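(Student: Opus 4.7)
The plan for (1) is to exploit that $\mca P((0,1])$ coincides with the heart $\mca A_{(\beta,\omega)}$, so that the given distinguished triangle is a genuine short exact sequence in $\mca A_{(\beta,\omega)}$. Taking the associated long exact sequence of cohomology sheaves and using $H^{-1}(E)=0$ and $H^0(E)=E$, one obtains $H^{-1}(A)=0$ (so $A$ is a sheaf, hence lies in $\mca T_{(\beta,\omega)}$) together with an exact sequence of sheaves
\[
0 \to H^{-1}(F) \to A \to E \to H^0(F) \to 0.
\]
Since $H^{-1}(F)\in\mca F_{(\beta,\omega)}$ is torsion-free, any torsion subsheaf of $A$ meets $H^{-1}(F)$ trivially and so injects into the torsion-free sheaf $E$, forcing it to be zero.

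For (2), I would first sharpen (1) to the statement $H^{-1}(F)=0$, so that $A$ is an honest subsheaf of $E$. Since $A$ is a nonzero torsion-free sheaf in $\mca T_{(\beta,\omega)}$ one has $\mu_{\omega}^-(A)>\beta\omega$, while $H^{-1}(F)\in\mca F_{(\beta,\omega)}$ gives $\mu_{\omega}^+(H^{-1}(F))\le\beta\omega$. The standard Hom-vanishing between $\mu$-semistable sheaves of strictly decreasing slopes, applied inductively to the compositions of $H^{-1}(F)\hookrightarrow A$ with the successive quotients by the HN filtration $0=F_0\subset F_1\subset\cdots\subset F_m=A$ of $A$, forces $H^{-1}(F)$ into $F_0=0$.

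With $A$ now a proper subsheaf of $E$, Gieseker stability of $E$ yields $p(A)<p(E)$ as polynomials. Using the expansion (\ref{Hilbert}), the leading nontrivial coefficient of $p(E)-p(A)$ produces either $\mu_{\omega}(A)<\mu_{\omega}(E)$ (which is the conclusion) or the equality $\mu_{\omega}(A)=\mu_{\omega}(E)$ together with $s_A/r_A<s_E/r_E$. To rule out the equality case I would translate the phase hypothesis: because $A$ and $E$ are torsion-free sheaves in $\mca T_{(\beta,\omega)}$, $\lambda_A$ and $\lambda_E$ are strictly positive, so $Z(A)$ and $Z(E)$ lie in the open upper half plane, and the strict monotonicity of cotangent turns $\arg Z(E)\le\arg Z(A)$ into $N_{A,E}(x,y)\le0$. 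Exploiting (\ref{4.0}) together with $\mr{NS}(X)=\bb Z\cdot L$, the quantities $\mf{Re}Z(A)$ and $\mf{Re}Z(E)$ become scalar expressions in $v^2,r,\lambda,y$; in the hypothetical equality case $\lambda_A/r_A=\lambda_E/r_E=:\alpha>0$ the $rdy^2$- and $d\lambda^2/r$-terms cancel in the cross-multiplication defining $N_{A,E}$, leaving
\[
N_{A,E}(x,y)=\alpha\, r_A r_E\bigl(s_E/r_E - s_A/r_A\bigr).
\]
This is strictly positive by Gieseker stability, contradicting $N_{A,E}\le 0$.

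The main obstacle is verifying this precise cancellation and tracking the signs in the translation of the phase inequality into $N_{A,E}\le 0$; once that collapse is in hand, the rest is a matter of t-structure cohomology, the standard $\mu$-HN Hom-vanishing, and the reduction of (\ref{4.0}) to a $\bb C$-valued scalar granted by the assumption $\mr{NS}(X)=\bb Z\cdot L$.
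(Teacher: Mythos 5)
Your part (1) is correct and is essentially the paper's argument. Your treatment of the case $H^{-1}(F)=0$ in part (2) is also sound: the reduction of $p(A)<p(E)$ to either a strict slope inequality or the equality case with $s_A/r_A<s_E/r_E$, and the computation showing that in the equality case $N_{A,E}=\alpha\,r_Ar_E(s_E/r_E-s_A/r_A)>0$ contradicts $\arg Z(E)\leq\arg Z(A)$, matches what the paper does (the signs and the cancellation you were worried about do work out as you describe).

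However, there is a genuine gap in your claimed reduction to $H^{-1}(F)=0$. The Hom-vanishing for $\mu$-semistable sheaves says $\Hom(G,H)=0$ when $\mu_{\omega}(G)>\mu_{\omega}(H)$, i.e.\ it kills maps from \emph{higher} slope to \emph{lower} slope. Here $H^{-1}(F)$ has $\mu_{\omega}^{+}\leq\beta\omega$ and the HN factors of $A$ all have slope $>\beta\omega$, so the slopes go the wrong way and nothing vanishes: a map from a low-slope sheaf into a high-slope sheaf is perfectly possible (e.g.\ $\Hom(\mca O_X(-L),\mca O_X)\neq 0$, and indeed the torsion pair only guarantees $\Hom(\mca T_{(\beta,\omega)},\mca F_{(\beta,\omega)})=0$, not the reverse). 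So the kernel $H^{-1}(F)$ of $f:A\to E$ need not vanish, and a subobject of $E$ in the heart $\mca P((0,1])$ is in general only a sheaf map $A\to E$ with kernel in $\mca F_{(\beta,\omega)}$ and cokernel in $\mca T_{(\beta,\omega)}$, not an injection. The paper handles the case $H^{-1}(F)\neq 0$ by a separate (and in fact easier) argument that does not use the phase hypothesis at all: from $\mu_{\omega}(H^{-1}(F))\leq\mu_{\omega}^{+}(H^{-1}(F))\leq\beta\omega<\mu_{\omega}^{-}(A)\leq\mu_{\omega}(A)$ and the exact sequence $0\to H^{-1}(F)\to A\to\Im(f)\to 0$, the seesaw principle gives $\mu_{\omega}(A)<\mu_{\omega}(\Im(f))$, and since $\Im(f)$ is a genuine subsheaf of the Gieseker stable (hence $\mu$-semistable) sheaf $E$ one gets $\mu_{\omega}(\Im(f))\leq\mu_{\omega}(E)$, whence $\mu_{\omega}(A)<\mu_{\omega}(E)$. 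You need to add this case rather than argue it away.
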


\begin{proof}
We first prove the assertion $(1)$. 
If $G \in \mca P((0,1]) = \mca A_{(\beta,\omega)}$, then the $i$-th cohomology $H^i(G)$ is concentrated at $i=0$ and $-1$. 
Then we see that $A$ is a sheaf by the exact sequence
\[
\begin{CD}
0= H^{-2}(F) @>>> H^{-1}(A) @>>> H^{-1}(E) =0
\end{CD}
\]
where we use the fact that $E$ is a sheaf for the last equality. 
Since $E$ and $A$ are sheaves, we have the following exact sequence of sheaves:
\[
\begin{CD}
0 @>>> H^{-1}(F) @>>> A @>f>> E @>>> H^0(F) @>>> 0 .
\end{CD}
\]
The sheaf $H^{-1}(F)$ is torsion free since it is in $\mca F_{(\beta,\omega)}$. 
Thus $A$ is an extension of torsion free sheaves. 
Hence $A$ is torsion free. 
\vspace{5pt} 

Let us prove the assertion $(2)$. \\
\textit{Case I. When $H^{-1}(F) = 0$. }

Then $A$ is a subsheaf of $E$. 
So we have 
\begin{equation}
p(A) < p(E). \label{Hilb}
\end{equation}
Thus $\mu_{\omega}(A) \leq \mu _{\omega}(E)$. Assume that $\mu_{\omega}(A) = \mu_{\omega}(E)$. 
By the formula (\ref{Hilbert}) and the inequality (\ref{Hilb}) we have 
\[
\frac{s_A}{r_A} < \frac{s_E}{r_E}, 
\]
where $v(A) = r_A \+ \Delta _A \+ s_A$ and $v(E) = r_E \+ \Delta _E \+ s_E$. 
Hence we have $v(A)^2/r_A^2 > v(E)^2/r_E^2$. 
Here we also used the fact that the Picard number is $1$. 
Combining this with $\mu_{\omega}(A) = \mu_{\omega }(E)$, 
we have $\arg Z(A)/r_A < \arg Z(E)/r_E$ by the formula (\ref{4.0}). 
This contradicts the fact that $\arg Z(E) \leq  \arg Z(A)$. 
\vspace{5pt}
%%%%%%%%%%%%%%%%%%%%%%%%%

\noindent
\textit{Case II. When $H^{-1}(F) \neq 0$. }

Recall that $H^{-1}(F)$ is torsion free. 
We have the following inequalities:
\[
\mu_{\omega}(H^{-1}(F)) \leq \mu_{\omega}^+(H^{-1}(F)) \leq \beta \omega < 
\mu_{\omega}^-(A) \leq \mu_{\omega}(A). 
\]
Hence we have $\mu_{\omega }(H^{-1}(F)) < \mu_{\omega}(A) < \mu_{\omega}(\Im (f))$, 
where $\Im (f)$ is the image of $f:A \to E$. 
Since $\Im (f)$ is a subsheaf of $E$, 
$\mu_{\omega }(\Im (f)) \leq \mu_{\omega }(E)$. 
Hence we have $\mu_{\omega }(A) < \mu_{\omega}(E)$. 
\end{proof}

As a consequence of Lemma \ref{4.1}, we prove the following proposition. 

\begin{prop}\label{4.2}
Let $(X,L)$ be a generic K3, let $\sigma = \sigma_{(\beta,\omega)} =(Z,\mca P)$ be in $V(X)$, 
and let $E$ be a Gieseker stable torsion free sheaf with $v(E)^2 \leq 0$ and $E \in \mca P((0,1])$.

$(1)$ Assume that $E$ is not $\sigma $-semistable. Then there is a torsion free $\sigma$-stable sheaf $S$ such that 
$\beta \omega < \mu_{\omega}(S) < \mu_{\omega}(E)$, $v(S)^2=-2$ and $\arg Z(S) =\phi_{\sigma}^+(E)$. 
In particular $\arg Z(E) < \arg Z(S)$. 
% のーとではdist. triangle の存在も書いていたが、不要なので消した。

$(2)$ Assume that $E$ is not $\sigma $-stable but $\sigma$-semistable. 
Then there is a torsion free $\sigma$-stable sheaf $S$ 
such that $\beta \omega < \mu_{\omega}(S) < \mu_{\omega}(E)$, $v(S)^2=-2$ and $\arg Z(S)= \arg Z(E)$. 
\end{prop}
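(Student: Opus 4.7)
The plan is to produce $S$ as a $\sigma$-stable subobject of $E$ inside the heart $\mca A_{(\beta,\omega)}$, drawn in case (1) from a Harder-Narasimhan factor of $E$ and in case (2) from a Jordan-H\"older factor of $E$ itself, and then to verify the three listed properties in turn. In case (1) I would take the maximal Harder-Narasimhan subobject $A \hookrightarrow E$, so $A \in \mca P(\phi^+)$ with $\phi^+ = \phi^+_{\sigma}(E) > \arg Z(E)$, and then, using local finiteness of $\sigma$, let $S$ be the first factor of a Jordan-H\"older filtration of $A$; then $S \hookrightarrow A$ is $\sigma$-stable of phase $\phi^+$, and composing with $A \hookrightarrow E$ realises $S$ as a $\sigma$-stable subobject of $E$ with $\arg Z(S) = \phi^+ > \arg Z(E)$. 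In case (2), the failure of $\sigma$-stability produces, via a Jordan-H\"older filtration of $E$, a proper $\sigma$-stable subobject $S \hookrightarrow E$ in $\mca A_{(\beta,\omega)}$ with $\arg Z(S) = \arg Z(E)$.

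I would next check that $S$ is torsion-free and that $\beta\omega < \mu_{\omega}(S) < \mu_{\omega}(E)$. Because the triangle $S \to E \to E/S$ is non-trivial and lies in $\mca P((0,1])$, and $E$ is torsion-free, Lemma \ref{4.1}(1) gives that $S$ is a torsion-free sheaf. To apply Lemma \ref{4.1}(2) one needs $\arg Z(S) < 1$: in case (1) this is because $\phi^+ = 1$ would force the torsion-free sheaf $S$ into $\mca P(1)$, contradicting the classification of objects of phase $1$ recalled in Remark \ref{2.4}(1); in case (2) the same remark shows that the torsion-free sheaf $E$ cannot be $\sigma$-semistable of phase $1$, whence $\arg Z(S) = \arg Z(E) < 1$. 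Lemma \ref{4.1}(2) then delivers $\mu_{\omega}(S) < \mu_{\omega}(E)$. Since $S$ is torsion-free and lies in $\mca P((0,1])$, Remark \ref{2.4}(2) places $S$ in $\mca T_{(\beta,\omega)}$, whence $\mu_{\omega}(S) \geq \mu_{\omega}^{-}(S) > \beta\omega$.

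The remaining and main step is to derive $v(S)^2 = -2$. As $S$ is $\sigma$-stable it is simple, so $v(S)^2 \geq -2$ by the lemma in \S3.1. Writing $\beta = xL$, $\omega = yL$ with $y>0$, and $\mu_T := n_T/r_T - x$, the formula (\ref{4.0}) rewrites as
\[
\frac{Z_{(\beta,\omega)}(T)}{r_T} \;=\; \frac{v(T)^2}{2r_T^2} + d(y^2 - \mu_T^2) + 2\sqrt{-1}\, dy\mu_T,
\]
whose imaginary part $2dy\mu_T$ is positive for both $T=S$ and $T=E$ since $\mu_{\omega}(T) > \beta\omega$. The inequality $\arg Z(S) \geq \arg Z(E)$ (with equality in case (2)) then translates, after cross-multiplying by $r_S r_E$ and cancelling the common positive factor $2dy$, into
\[
\left(\tfrac{v(S)^2}{2r_S^2} + d(y^2 - \mu_S^2)\right)\mu_E \;\leq\; \left(\tfrac{v(E)^2}{2r_E^2} + d(y^2 - \mu_E^2)\right)\mu_S,
\]
and this rearranges to
\[
\frac{v(S)^2}{2r_S^2}\mu_E - \frac{v(E)^2}{2r_E^2}\mu_S \;\leq\; d(\mu_S - \mu_E)(y^2 + \mu_S\mu_E).
\]
The right-hand side is strictly negative, since $\mu_S < \mu_E$ and both are positive, while $-v(E)^2 \mu_S /(2r_E^2) \geq 0$ because $v(E)^2 \leq 0$; hence $v(S)^2 < 0$. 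Combined with $v(S)^2 \geq -2$ and the evenness of the Mukai pairing, this forces $v(S)^2 = -2$.

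The principal obstacle is precisely this last step: converting the angular inequality $\arg Z(S) \geq \arg Z(E)$ into a usable scalar inequality on $v(S)^2$ and $v(E)^2$. The Picard number one hypothesis is essential here, because it collapses the expression $(\omega + \sqrt{-1}(\Delta_T/r_T - \beta))^2$ to the scalar $2d(y + \sqrt{-1}\mu_T)^2$ and reduces the analysis to a one-variable computation; without that collapse the rearrangement above would break down and the genuinely two-dimensional geometry of the Mukai lattice would have to be engaged.
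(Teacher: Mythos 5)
Your proposal is correct and follows essentially the same route as the paper: $S$ is extracted as a stable factor of the top Harder--Narasimhan piece (resp.\ a Jordan--H\"older subobject) of $E$, torsion-freeness and the slope inequalities come from Lemma \ref{4.1} together with Remark \ref{2.4}, and $v(S)^2=-2$ is forced by $v(E)^2\leq 0$ via formula (\ref{4.0}). The only difference is that you write out explicitly the cross-multiplication argument deducing $v(S)^2<0$ from $\arg Z(S)\geq \arg Z(E)$, a computation the paper leaves implicit in its citation of (\ref{4.0}); your version of that computation is correct.
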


\begin{proof}
We prove $(1)$. 
Since $E$ is not $\sigma $-semistable, there is the non-trivial HN-filtration of $E$:
\[
\resizebox{0.99\hsize}{!}{
\xymatrix{
0\ar[rr]	&	&	E_1\ar[ld]\ar[rr]	& 	& E_2\ar[r]\ar[ld] & \cdots \ar[r] & E_{n-1}\ar[rr]	&	  &E_n=E \ar[ld]\\ 
	&A_1\ar@{-->}[ul]^{[1]}& 		&A_2\ar@{-->}[ul]^{[1]}& 	  &		  &			&A_n\ar@{-->}[ul]^{[1]}& 
}}.
\]
Let $S$ be a stable subobject of $A_1$. 
We show that $S$ satisfies our requirement. 
By the composition of natural two morphisms, 
we have the following distinguished triangle in $\mca P((0,1])$:
\begin{equation}
\begin{CD}
S @>>> E @>>> F @>>> S[1] \label{sankaku}
\end{CD}.
\end{equation}
Then $S$ is a torsion free sheaf by Lemma \ref{4.1} (1). 
By Remark \ref{2.4}, we have $\arg Z(S)= \arg Z(A_1) < 1$. 
Thus $\beta \omega < \mu_{\omega}(S)$. 
By Lemma \ref{4.1}, $\mu _{\omega}(S) < \mu _{\omega }(E)$. 
Hence $v(S)^2$ should be negative by 
the assumption $v(E)^2 \leq 0$ and the formula (\ref{4.0}). 
Since $S$ is stable, we have $v(S)^2=-2$. 
\vspace{5pt}

%%%%%%%%%%%
Next we prove $(2)$. 
If $E$ satisfies the assumption, $E$ has a $\sigma $-stable subobject $S$ with $\arg Z(S)= \arg Z(E)$. 
Thus we have the same triangle as (\ref{sankaku}). Hence we have proved the assertion. 
\end{proof}

%\begin{claim}\label{4.3}
%Let $X$ be a projective K3 surface (we do not assume the Picard number is $1$) and $\sigma \in V(X)$. 
%Then there is no torsion free sheaf $A$ which is $\sigma$-semistable with phase $1$. 
%\end{claim}
%
%\begin{proof}
%Assume that such an $A$ exists. Then $A$ has a JH filtration:
%\[
%\resizebox{0.99\hsize}{!}{
%\xymatrix{
%0\ar[rr]	&	&	A_1\ar[ld]\ar[rr]	& 	& A_2\ar[r]\ar[ld] & \cdots \ar[r] & A_{m-1}\ar[rr]	&	  &A_m= A \ar[ld]\\ 
%	&S_1\ar@{-->}[ul]^{[1]}& 		&S_2\ar@{-->}[ul]^{[1]}& 	  &		  &			&S_m\ar@{-->}[ul]^{[1]}& 
%}}.
%\]
%By Remark \ref{2.4} (1), $S_1$ should be $\mca O_x$ or $\mca E[1]$, where $x \in X$ and $\mca E$ is a locally free sheaf. 
%Since $A$ is torsion free, we have $\Hom _X^0(\mca O_x, A)=0$. 
%So $S_1$ is $\mca E [1]$. 
%However this contradicts the fact that $\Hom^0_X( \mca E[1], A) = \Ext ^{-1}_{\mca O_X}(\mca E, A)=0$. 
%Thus $A$ is $\sigma$-stable. This contradicts Remark \ref{2.4} (1). 
%\end{proof}

Next we prepare, in some sense, dual assertions of Lemma \ref{4.1} and Proposition \ref{4.2} for the case $E \in \mca P((-1,0])$. 

\begin{lem}\label{4.4}
Let $(X,L)$ be a generic K3 and $\sigma _{(\beta ,\omega)} = (Z,\mca P) \in V(X)$. 
Assume that $F \to E \to A \to F[1]$ is a non-trivial distinguished triangle in $\mca P((-1,0])$. 

$(1)$ If $E$ is a torsion free sheaf then $A$ is also a torsion free sheaf.

$(2)$ If $E$ is a $\mu$-stable locally free sheaf, then $A$ is a torsion free sheaf and 
the strict inequality $\mu_{\omega}(E) < \mu_{\omega}(A)$ holds. 
\end{lem}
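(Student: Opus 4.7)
The plan is to mirror the proof of Lemma \ref{4.1}, using that $\mca P((-1,0]) = \mca A_{(\beta,\omega)}[-1]$. For (1), shift the triangle by $[1]$ to obtain a short exact sequence $0 \to F[1] \to E[1] \to A[1] \to 0$ in the abelian category $\mca A_{(\beta,\omega)}$. Since $E$ is a torsion-free sheaf in $\mca P((-1,0])$, it lies in $\mca F_{(\beta,\omega)}$, so $H^{-1}(E[1])=E$ and $H^0(E[1])=0$. Writing $F^i := H^i(F)$ and $A^i := H^i(A)$, the long exact sequence of cohomology sheaves yields
\[
0 \to F^0 \to E \to A^0 \to F^1 \to 0, \qquad A^1 = 0,
\]
with $F^0 \in \mca F_{(\beta,\omega)}$ and $F^1 \in \mca T_{(\beta,\omega)}$. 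Since $A \in \mca P((-1,0])$ has cohomology only in degrees $0$ and $1$, the vanishing $A^1 = 0$ forces $A = A^0 \in \mca F_{(\beta,\omega)}$, a torsion-free sheaf. This proves (1).

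For (2), set $G := E/F^0 = \Im(E \to A)$, so that $0 \to G \to A \to F^1 \to 0$ is exact. If $F^0 = E$, then $A \cong F^1$ is simultaneously torsion-free (by (1)) and in $\mca T_{(\beta,\omega)}$, forcing $\mu_\omega(A) \geq \mu_\omega^-(A) > \beta\omega \geq \mu_\omega(E)$. Otherwise $F^0 \subsetneq E$; the case $\rank F^0 = \rank E$ is impossible, since $G$ would be a nonzero torsion sheaf inside the torsion-free $A$. So either $F^0 = 0$ (and $G = E$) or $\rank F^0 < \rank E$, in which case $\mu$-stability of $E$ gives $\mu_\omega(F^0) < \mu_\omega(E)$ and hence $\mu_\omega(G) > \mu_\omega(E)$. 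On the $F^1$ side: if $\rank F^1 > 0$ then $F^1 \in \mca T_{(\beta,\omega)}$ gives $\mu_\omega(F^1) \geq \mu_\omega(F^1/\mr{torsion}) \geq \mu_\omega^-(F^1/\mr{torsion}) > \beta\omega \geq \mu_\omega(E)$; if $F^1$ is pure torsion supported in positive dimension, then $c_1(F^1)\cdot\omega > 0$. In each of these scenarios, combining with $0 \to G \to A \to F^1 \to 0$ yields $\mu_\omega(A) > \mu_\omega(E)$.

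The main obstacle is the residual case $F^0 = 0$ with $F^1 \neq 0$ a pure $0$-dimensional torsion sheaf: here $\rank F^1 = 0$ and $c_1(F^1)\cdot\omega = 0$, so naively $\mu_\omega(A) = \mu_\omega(E)$. To rule this out one must invoke the local freeness of $E$. Since $F^1$ is supported in codimension $2$, $\mca{E}xt^i_{\mca O_X}(F^1,\mca O_X) = 0$ for $i \leq 1$; because $E$ is locally free, $\mca{E}xt^i(F^1,E) \cong \mca{E}xt^i(F^1,\mca O_X) \otimes E = 0$ for $i \leq 1$. The local-to-global spectral sequence then gives $\Ext^1(F^1,E) = 0$, so the extension $0 \to E \to A \to F^1 \to 0$ must split; but then $A \cong E \oplus F^1$ has the nonzero torsion summand $F^1$, contradicting $A$ torsion-free. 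This rules out the remaining case and completes the proof of (2).
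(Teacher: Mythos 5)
Your proof is correct and follows essentially the same route as the paper's: both reduce to the four-term exact sequence $0 \to H^0(F) \to E \to A \to H^1(F) \to 0$, run the same slope case analysis on the kernel and cokernel ends, and both invoke the local freeness of $E$ exactly to exclude the delicate case of a $0$-dimensional cokernel. The only difference is the mechanism in that last sub-case: the paper applies $\Hom(-,\mca O_x)$ and Serre duality to exhibit a nonzero map $\mca O_x \to A$ directly, whereas you show $\Ext^1(H^1(F),E)=0$ via the local-to-global spectral sequence so that the extension splits off the torsion summand --- two equivalent ways of showing that the torsion of $H^1(F)$ would survive into $A$.
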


\begin{proof}We first prove $(1)$. 
Since $\mca P((-1,0]) = \mca P((0,1])[-1] = \mca A_{(\beta ,\omega)}[-1]$, 
the $i$-th cohomology $H^i(G)$ of $G \in \mca P((-1,0])$ is concentrated at $i=0$ and $1$. 
Note that $H^1(A)=$ is $0$ by the fact $H^2(F) = H^1(E)=0$. 
Since $E$ and $A$ are sheaves, we have the following exact sequence of sheaves:
\[
\begin{CD}
0 @>>> H^0(F) @>>> E @>f>> A @>>>H^1(F) @>>> 0 .
\end{CD}
\]
Since $A \in \mca F_{(\beta,\omega)}$, $A$ is torsion free. 
We remark that $H^0(F)$ is also torsion free. 
\vspace{5pt}

Next we prove the inequality in $(2)$. \\
\textit{Case I. When $H^0(F) \neq 0$. }

Then $\rank (\Im (f)) < \rank E$ where $\Im (f)$ is the image of $f$. 
Since $E$ is $\mu$-stable, %and $\Im (f)$ is torsion free, 
we have $\mu_{\omega}(E) < \mu_{\omega}(\Im (f))$. \\
(I-i) Assume that $H^1(F)=0$. Then $\Im(f)=A$. So we have $\mu_{\omega}(E) < \mu_{\omega }(A)$. \\
(I-ii) Assume that $H^1(F)$ is torsion. Then $\omega \Delta_{H^1(F)} \geq 0$. 
Since $\rank \Im(f) = \rank A$ and $\Delta _A = \Delta _{\Im (f)} + \Delta _{H^1(F)}$, we have 
$\mu_{\omega }(\Im (f)) \leq \mu_{\omega}(A)$. Hence we get the inequality. \\
(I-iii) Assume that $ H^1(F) \supsetneqq T$, where $T$ is the maximal torsion subsheaf of $H^1(F)$. 
Then we have the following diagram of exact sequences:
\[
\begin{CD}
@.		@.		A	\\
@.		@.		@VVV\\
0 @>>> T @>>> H^1(F) @>>> H^1(F)/T @>>>0 \\
@.		@.		@VVV\\
@.		@.		0	
\end{CD}.
\]
Recall the following inequalities:
\[
\mu_{\omega}(A) \leq \mu_{\omega}^+ (A) \leq \beta  \omega < \mu_{\omega}^-(H^1(F)/T) \leq \mu_{\omega}(H^1(F)/T) .
\]
By the argument of (I-ii), we have $\mu_{\omega}(H^1(F)/T) \leq \mu_{\omega}(H^1(F))$. 
So $\mu_{\omega}(A) < \mu_{\omega} (H^1(F))$.  
Since the following sequence is exact, we have $\mu_{\omega}(\Im (f)) < \mu_{\omega}(A)$:
\[
\begin{CD}
0@>>> \Im (f) @>>> A @>>> H^1(F) @>>> 0.
\end{CD}
\]
Thus we have proved the inequality $\mu_{\omega}(E) < \mu_{\omega}(A)$. \vspace{5pt}\\
%%%%%%%%%%%%%%%%%%%%%%%
\textit{Case II. When $H^0(F)=0$. }

The sequence 
\begin{equation}
\begin{CD}
0 @>>> E @>>> A @>>> H^1(F) @>>> 0
\end{CD} \label{exact}
\end{equation}
is an exact sequences of sheaves. Hence we use $F$ instead of $H^1(F)$. 
Notice that both $A$ and $E$ are in $\mca F_{(\beta,\omega)}$ and that $F$ is in $ \mca T_{(\beta,\omega)}$. \\
(II-i) Assume that $F \supsetneqq \mi{tor} $ where $\mi{tor}$ is the maximal torsion subsheaf of $F$.  
By the argument of (1-iii), we have the inequality. \\
(II-ii) Assume that $F$ is torsion with $\dim \mr{Supp}( F) =1$. 
Then $\rank A = \rank E$ and $\Delta _F \omega >0$. So we have the inequality. \\
(II-iii) Assume that $F$ is torsion with $\dim \mr{Supp}(F) =0$. 
Let $x$ be a closed point  in $\mr{Supp}(F)$. 
By (\ref{exact}), 
we have the exact sequence of $\bb C$ vector spaces:
\begin{center}
\resizebox{0.99\hsize}{!}{
$
\begin{CD}
\Ext ^1_{\mca O_X}(E,\mca O_x) @>>> \Ext_{\mca O_X}^2(F, \mca O_x) @>>> \Ext_{\mca O_X}^2(A, \mca O_x) @>>> \Ext_{\mca O_X}^2(E,\mca O_x)
\end{CD}
$.
}
\end{center}
\vspace{5pt}
Since $E$ is locally free and $\dim X=2$, $\Ext ^1_{\mca O_X}(E,\mca O_x)=\Ext ^2_{\mca O_X}(E,\mca O_x)=0$. 
By the Serre duality we have 
\[
\Ext ^2_{\mca O_X}(F,\mca O_x) = \Hom _X^0(\mca O_x, F)^* 
\mbox{ and }
\Ext ^2_{\mca O_X}(A,\mca O_x) = \Hom _X^0 (\mca O_x, A)^*.
\]
Since $x \in \mr{Supp}(F)$, $\Hom_{X}^0(\mca O_x,F) \neq 0$. 
So $\Hom_{X}^0(\mca O_x,A)$ also is not $0$. This contradicts the torsion-freeness of $A$. 
Thus we complete the proof. 
\end{proof}

\begin{prop}\label{4.5}
Let $(X,L)$ be a generic K3, let $\sigma=(Z, \mca P)$ be in $V(X)$, 
and let $E$ be a $\mu$-stable locally free sheaf with $v(E)^2 \leq 0$ and $E \in \mca P((-1,0])$. 

$(1)$ Assume that $E$ is not $\sigma$-semistable. 
Then there is a $\sigma$-stable torsion free sheaf $S$ such that 
$\mu_{\omega}(E) < \mu_{\omega}(S)$, $v(S)^2= -2$ and $\arg Z(S) = \phi_{\sigma}^{-}(E)$. 
In particular $\arg Z(S) < \arg Z(E)$ and $\mu_{\omega}(S) <\beta \omega$. 

$(2)$ Assume that $E$ is not $\sigma$-stable but $\sigma $-semistable. 
Then there is a $\sigma$-stable torsion free sheaf 
$S$ such that $\mu_{\omega}(E) < \mu_{\omega}(S)$, $v(S)^2=-2$ and $\arg Z(E) = \arg Z(S)$. 
Moreover we have $\mu_{\omega }(S) < \beta \omega$. 
\end{prop}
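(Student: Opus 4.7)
The plan is to mirror the proof of Proposition \ref{4.2}, with Lemma \ref{4.4} replacing Lemma \ref{4.1}; the argument is dual in that the sheaf $S$ now appears as a quotient of $E$ rather than a subobject, and sits together with $E$ in the shifted heart $\mca P((-1, 0])$. For (1), since $E$ is not $\sigma$-semistable, its Harder--Narashimhan filtration in $\mca P((-1, 0])$ has at least two factors $A_1, \ldots, A_n$ with strictly decreasing phases $\phi_1 > \cdots > \phi_n = \phi^-_{\sigma}(E)$, all in $(-1, 0]$. Local finiteness supplies a $\sigma$-stable factor $S$ of $A_n$ with $\arg Z(S) = \phi^-_{\sigma}(E) < \phi^+_{\sigma}(E) \leq 0$. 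Composing the HN map $E \to A_n$ with the JH quotient $A_n \to S$ and applying the octahedral axiom yields a distinguished triangle $F \to E \to S \to F[1]$ sitting inside $\mca P((-1, 0])$ (the heart is extension closed). Lemma \ref{4.4} now applies and gives that $S$ is torsion free with $\mu_{\omega}(E) < \mu_{\omega}(S)$, while $\arg Z(S) < 0$ forces $\mu_{\omega}(S) < \beta \omega$.

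For (2), a Jordan--H\"older filtration of $E$ in the abelian category $\mca P(\arg Z(E))$ yields a $\sigma$-stable quotient $S$ of $E$ with $\arg Z(S) = \arg Z(E)$, and the resulting triangle $F \to E \to S \to F[1]$ again lies in $\mca P((-1, 0])$. Lemma \ref{4.4} provides both torsion-freeness of $S$ and $\mu_{\omega}(E) < \mu_{\omega}(S)$; strictness $\mu_{\omega}(S) < \beta \omega$ follows because $\mu_{\omega}(S) = \beta \omega$ would force $\arg Z(S) = 0 = \arg Z(E)$ and hence $\mu_{\omega}(E) = \beta \omega = \mu_{\omega}(S)$, contradicting Lemma \ref{4.4}(2).

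The delicate point in both cases is $v(S)^2 = -2$. Since $S$ is $\sigma$-stable we have $v(S)^2 \geq -2$, so I argue by contradiction assuming $v(S)^2 \geq 0$. Writing $\omega = yL$, $\beta = xL$, $t_E = n_E/r_E$, $t_S = n_S/r_S$ and expanding (\ref{4.0}) with $s_E = d n_E^2/r_E - v(E)^2/(2 r_E)$ (and the analogue for $S$), a direct calculation produces
\[
N_{S, E}(x, y) = d\, r_E r_S (t_E - t_S)\bigl[y^2 + (x - t_E)(x - t_S)\bigr] + \frac{(n_E - r_E x)\, v(S)^2}{2 r_S} - \frac{(n_S - r_S x)\, v(E)^2}{2 r_E}.
\]
Under the sign data $t_E < t_S < x$ (using $\mu_{\omega}(E) < \mu_{\omega}(S) < \beta \omega$), $v(E)^2 \leq 0$, and the assumed $v(S)^2 \geq 0$, each summand is non-positive and the first is strictly negative, so $N_{S, E}(x, y) < 0$. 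On the other hand,
\[
N_{S, E}(x, y) = \frac{|Z(E)|\, |Z(S)|}{2 y d}\, \sin\bigl(\pi(\arg Z(E) - \arg Z(S))\bigr),
\]
which is strictly positive in case (1) (where $\arg Z(E) > \phi^-_{\sigma}(E) = \arg Z(S)$ strictly because the HN factors have distinct phases) and zero in case (2). Both contradict the sign analysis, forcing $v(S)^2 < 0$ and hence $v(S)^2 = -2$.

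The main obstacle is the derivation and sign analysis of this $N_{S, E}$ identity; the strict positivity of the bracket $y^2 + (x - t_E)(x - t_S)$ is the key geometric input and depends on both $E, S \in \mca F_{(\beta, \omega)}$ and on $\mu_{\omega}(S) < \beta \omega$, so the earlier strictness arguments for $\mu_{\omega}(S) < \beta \omega$ in (1) and (2) feed directly into this final step.
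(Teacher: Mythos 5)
Your proposal is correct and follows essentially the same route as the paper: take $S$ to be a $\sigma$-stable quotient of the lowest HN factor (resp.\ a JH quotient), fit it into a triangle $F \to E \to S \to F[1]$ inside $\mca P((-1,0])$, apply Lemma \ref{4.4} for torsion-freeness and the slope inequality, and derive $\mu_{\omega}(S) < \beta\omega$ from $\arg Z(S) < 0$. The only difference is that where the paper dismisses $v(S)^2 \geq 0$ with a one-line appeal to formula (\ref{4.0}), you carry out the $N_{S,E}$ sign computation explicitly; your expansion is a correct and more detailed justification of the same step, not a different argument.
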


\begin{proof}
Let us prove $(1)$. 
Since $E$ is not $\sigma$-semistable, $E$ has the HN-filtration:
\[
\resizebox{0.99\hsize}{!}{
\xymatrix{
0\ar[rr]	&	&	E_1\ar[ld]\ar[rr]	& 	& E_2\ar[r]\ar[ld] & \cdots \ar[r] & E_{n-1}\ar[rr]	&	  &E_n=E \ar[ld]\\ 
	&A_1\ar@{-->}[ul]^{[1]}& 		&A_2\ar@{-->}[ul]^{[1]}& 	  &		  &			&A_n\ar@{-->}[ul]^{[1]}& 
}
}.
\]
Let $S$ be a stable quotient of $A_n$ in $\mca P((-1,0])$. 
Then we show that $S$ is what we need. 
By the composition of natural morphisms, we have the following distinguished triangle in $\mca P((-1,0])$:
\begin{equation}
\begin{CD}
F @>>> E @>>> S @>>> F[1] \label{sankaku2}
\end{CD}.
\end{equation}

By Lemma \ref{4.4}, $S$ is a torsion free sheaf and we have $\mu_{\omega}(E) < \mu_{\omega }(S)$. 
Since $v(E)^2 \leq 0$, $v(S)^2$ should be negative. 
Since $S$ is $\sigma$-stable, we have $v(S)^2=-2$. 
Finally we prove the inequality $\mu_{\omega}(S) < \beta \omega$. 
Since $S \in \mca P((-1,0])$ we have $\mu_{\omega}(S)\leq \mu_{\omega}(S)^+ \leq \beta \omega$. 
So, If the equality $\mu_{\omega}(S) =\beta \omega$ holds then we have $\arg Z(S)=0$. 
This contradicts the fact that $\arg Z(S) < \arg Z(E) \leq 0$. 
\vspace{7pt} 
%%%%%%%%%%%%%%%%%

(2) By the assumption, $E$ has a stable quotient $E \to S$. Then we have the same triangle as (\ref{sankaku2}). 
Similarly to $(1)$ we see that $S$ is a $\sigma$-stable torsion free sheaf with $v(S)^2=-2$ and $\mu_{\omega}(E) < \mu_{\omega}(S)$. 
Finally we consider the inequality $\mu_{\omega}(S) < \beta \omega$. 
Similarly to $(1)$,  we have $\mu_{\omega}(S) \leq \beta \omega $. 
If $\mu_{\omega}(S) =\beta \omega$ then $\arg Z(S)=0$. 
On the other hand, we have $\mu_{\omega}(E) < \mu_{\omega}(S) = \beta \omega$. Thus $\arg Z(E)$ should be negative. 
This contradicts the fact that $\arg Z(E) =\arg Z(S)$. 
Thus we have got the assertion. 
\end{proof}

The following lemma is very important 
since it implies the non-existence of $\sigma$-stable factors in the proof of Theorem \ref{4.7}. 
%However, the proof might be long for readers. 
%So it might be a good idea to read the proof after reading the proof of Theorem \ref{4.7}. 

\begin{lem}\label{4.6}
Let $(X,L)$ be a generic K3 with $\deg X = 2d$. 
Assume that $E$ is a sheaf with $0< \rank E \leq \sqrt{d}$ and $v(E)^2=0$, 
and $A$ is a sheaf with $v( A) ^{2}=-2$. 
For $\sigma _{(\beta,\omega)} =(Z,\mca P) \in V(X)_{>2}$, the following holds. 

$(1)$ If $\beta \omega < \mu _{\omega}( A) < \mu_{\omega}(E)$, then $0< \arg Z(A) < \arg Z(E) < 1$. 

$(2)$ If $\mu_{\omega }(E) < \mu_{\omega}(A) < \beta \omega$, then $-1 < \arg Z(E) < \arg Z(A) < 0$. 

\end{lem}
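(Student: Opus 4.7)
The plan is to translate each phase comparison into the sign of the real quantity $N_{A,E}(x,y)$ introduced in (\ref{NAE}), and then establish the resulting polynomial inequality by a short case split on $r_A$.

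Write $\beta = xL$ and $\omega = yL$ with $y > 0$, and for a sheaf $M$ with $v(M) = r_M \+ n_M L \+ s_M$ put $\lambda_M := n_M - r_M x$; note $r_A > 0$ since $v(A)^2 = -2$ forbids $r_A = 0$ when $\mr{NS}(X) = \bb Z \cdot L$. Expanding formula (\ref{4.0}) separates into
\[
\mf{Im}\,Z(M) = 2yd\,\lambda_M, \qquad \mf{Re}\,Z(M) = \frac{v(M)^2}{2 r_M} + r_M d y^2 - \frac{d\lambda_M^2}{r_M}.
\]
In case $(1)$ the chain $\beta\omega < \mu_\omega(A) < \mu_\omega(E)$ is equivalent (after dividing by $2dy > 0$) to $\lambda_A, \lambda_E > 0$, so $Z(A)$ and $Z(E)$ both lie in the open upper half-plane; this already yields $0 < \arg Z(A)$ and $\arg Z(E) < 1$, reducing the lemma to the middle inequality $\arg Z(A) < \arg Z(E)$, which is equivalent to $N_{A,E}(x,y) > 0$.

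Substituting $v(E)^2 = 0$ and $v(A)^2 = -2$ into the real-part formula and collecting terms gives the closed form
\[
N_{A,E}(x,y) = -\frac{\lambda_E}{r_A} + dD\Bigl(y^2 + \frac{\lambda_E\lambda_A}{r_A r_E}\Bigr), \qquad D := r_A n_E - r_E n_A,
\]
where $D \geq 1$ by the slope inequality and the integrality of ranks and degrees. Multiplying by $r_A r_E > 0$ and eliminating $\lambda_E$ via the identity $\lambda_E r_A = D + \lambda_A r_E$, the target $N_{A,E} > 0$ reduces to the polynomial inequality
\begin{equation*}
dy^2 D r_A^2 r_E + dD^2\lambda_A + dD\lambda_A^2 r_E \;>\; Dr_E + \lambda_A r_E^2. \tag{$\ast$}
\end{equation*}

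The two hypotheses now enter as $dy^2 > 1$ (from $\omega^2 > 2$) and $r_E^2 \leq d$ (from $\rank E \leq \sqrt d$), and a short case split closes ($\ast$): if $r_E \leq dD\lambda_A$ then $dD^2\lambda_A \geq Dr_E$ and $dD\lambda_A^2 r_E \geq \lambda_A r_E^2$, and the strictly positive term $dy^2 D r_A^2 r_E$ provides the slack; otherwise $\lambda_A r_E < r_E^2/(dD) \leq 1$, and for $r_A \geq 2$ the factor $r_A^2 - 1 \geq 3$ lets $dy^2 D r_A^2 r_E$ alone dominate, while for $r_A = 1$ the inequality reduces, after absorbing $dy^2 > 1$, to $dD^2 + dD\lambda_A r_E > r_E^2$, which is forced by $dD^2 \geq d \geq r_E^2$ together with the strictly positive surplus $dD\lambda_A r_E > 0$. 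Part $(2)$ is dual: the new $\mu$-chain gives $\lambda_A, \lambda_E < 0$ and $D < 0$, placing the phases in $(-1,0)$, and $\arg Z(E) < \arg Z(A)$ becomes $N_{A,E}(x,y) < 0$, which after passing to absolute values is again ($\ast$), so the same argument closes. The main obstacle is ($\ast$) in the saturated configuration $r_A = 1$, $D = 1$, $r_E^2 = d$, where both hypotheses are consumed at near-equality; this is also why the bound $\rank E \leq \sqrt d$ is sharp, as witnessed in Example \ref{5.3}.
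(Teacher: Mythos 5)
Your argument is correct, and it departs from the paper's proof precisely after the point where the two coincide, namely the reduction of each phase comparison to the sign of $N_{A,E}(x,y)$ from (\ref{NAE}). I verified your closed form $N_{A,E}(x,y)=-\tfrac{\lambda_E}{r_A}+dD\bigl(y^2+\tfrac{\lambda_A\lambda_E}{r_Ar_E}\bigr)$ with $D=r_An_E-r_En_A$, the identity $\lambda_Er_A=D+\lambda_Ar_E$, and the equivalence of $N_{A,E}>0$ with your inequality $(\ast)$ (one harmless slip: clearing denominators requires multiplying by $r_A^2r_E$, not $r_Ar_E$); the three-way case split then closes $(\ast)$ using only $dy^2>1$, $r_E^2\le d$, $D\ge 1$, $\lambda_A>0$, and the sign flip $D\mapsto -D$, $\lambda_A\mapsto-\lambda_A$ does reduce part $(2)$ to the same inequality. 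The paper instead treats $N_{A,E}$ as a quadratic in $(x,y)$: it completes the square, locates the vertex $\mf{a}$, proves $\tfrac{n_A}{r_A}\le\mf{a}$ using $r_E\le\sqrt d$, and uses monotonicity in $x$ and $y$ to push the estimate to the corner $(x,y)=(\tfrac{n_A}{r_A},\tfrac{1}{\sqrt d})$, where $N_{A,E}=\lambda_E\cdot\mf{Re}\,Z(A)>0$ because $\omega^2>2$. What the paper's route buys is the geometric picture of the locus $N_{A,E}<0$ as (the interior of) a circle, which is reused in Examples \ref{5.3} and \ref{6.2}; what your route buys is the elimination of the monotonicity bookkeeping in favour of an integrality argument that makes explicit where each hypothesis is consumed and why the configuration $r_A=D=1$, $r_E^2=d$ is the extremal one, matching the sharpness claim of Example \ref{5.3}.
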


\begin{proof}
Since $\mr{NS}(X) = \bb Z\cdot L$, we put 
\begin{center}
$\beta =x L$, $\omega =y L$, 
$v(E)= r_E \+ n_E L \+ s_E$ and $v(A ) = r_A \+ n_A L \+ s_A$. 
\end{center}
Since $v(A)^2 =-2$, $r_A$ is positive. 
By the formula (\ref{4.0}) and by the fact $v(E)^2=0$, we have 
\begin{eqnarray}
Z(E)	&=&	\frac{r_E}{2}\Bigl( \omega + \sqrt{-1}\bigl( \frac{n_E L}{r_E} -\beta \bigr) \Bigr)^2	\notag \\
		%&=& dr_E \Bigl( y^2 - \frac{(n_E -r_E x)^2}{r_E^2} \Bigl) + 2\sqrt{-1}dy(n_E - r_E x)  \notag \\
		&=& dr_E \Bigl( y^2 - \frac{\lambda _E ^2}{r_E^2} \Bigl) + 2\sqrt{-1}dy\lambda _E  \notag ,
\end{eqnarray}
where $\lambda _E=n_E - r_E x$, and 
\begin{eqnarray}		
Z(A)	&=& \frac{v(A)^2}{2 r_A}+ \frac{r_A}{2}\Bigl( \omega + \sqrt{-1}\bigl( \frac{n_A L}{r_A} -\beta \bigr) \Bigr)^2 \notag \\
%		&=& -\frac{1}{r_A}+dr_A \Bigl( y^2 - \frac{(n_A -r_A x)^2}{r_E^2} \Bigl) + 2\sqrt{-1}dy(n_A - r_A x) \label{Z(A)} \\
		&=& -\frac{1}{r_A}+ dr_A \Bigl( y^2 - \frac{\lambda _A ^2}{r_A^2} \Bigl) + 2\sqrt{-1}dy\lambda _A \notag, 
\end{eqnarray}
where $\lambda _A= n_A -r_A x$. \vspace{5pt} \\
%%%%%%%%%%%%%%%%%%%%%%%
\textit{The proof of (1).}

By the assumption, we have $x < \frac{n_A }{r_A} < \frac{n_E}{ r_E}$. 
So both $\lambda _A$ and $\lambda _E$ are positive, and the strict inequality $r_A n_E - r_E n_A > 0$ holds. 
Hence 
\begin{eqnarray*}
\arg Z(A) < \arg Z(E)	& \iff & \frac{\mf{Re} Z(E) }{\lambda _E}  < \frac{\mf{Re}Z(A)}{\lambda _A}  \\
						& \iff & 0< N_{A,E}(x,y) .
\end{eqnarray*}
Then 
\begin{eqnarray}
N_{A,E}(x,y)	&=& \lambda _E \Bigl( -\frac{1}{r_A}+ d r_A y^2 - \frac{d \lambda _A^2}{r_A}  \Bigr) 
					-\lambda _A \Bigl (  d r_E y^2 - \frac{d \lambda _E^2}{r_E}  \Bigr) \notag \\
				&=& d y^2 (r_A \lambda _E -r _E \lambda _A) + d \lambda _A \lambda _E \Bigl( \frac{\lambda _E}{r_E} - \frac{\lambda _A }{r_A} \Bigr)
					-\frac{\lambda _E }{r_A} \notag \\
				&=& d y^2 (r_A n_E - r_E n_A) + d(n_A - r_A x)(n_E - r_E x)
					\Bigl( \frac{n _E}{r_E} - \frac{n _A }{r_A} \Bigr) \notag \\
				& &	-\frac{n_E-r_E x}{r_A} \notag \\
				&=& d(r_A n_E - r_E n_A)y^2 + d(r_A n_E - r_E n_A)(x-\mf{a}) ^2 \notag \\ 
				& &- d(r_A n_E - r_E n_A) \mf{a}^2+ d\frac{n_A n_E}{r_A r_E}(r_A n_E - r_E n_A)-\frac{n_E}{r_A}, \label{N} 
\end{eqnarray}
where 
\[
\mf{a}:=\frac{1}{2}\Bigl( \frac{n_A}{r_A}+ \frac{n_E}{r_E}-
		\frac{r_E}{d r_A (r_A n_E - r_E n_A) }\Bigr). 
\]

We shall prove $N_{A,E}(x,y) > N_{A,E}(\frac{n_A}{r_A}, \frac{1}{\sqrt d})$ (notice that $y^2 =\frac{1}{d} \iff \omega ^2 =2$) for any $(\beta, \omega)$ satisfying the assumption. 
We first prove $\frac{n_A}{r_A} \leq \mf{a}$. 
In fact, 
\begin{eqnarray}
\frac{n_A}{r_A} \leq \mf{a}	&\iff & \frac{n_A}{r_A}- \frac{n_E}{r_E} \leq \frac{r_E}{d r_A (r_E n_A -r_A n_E)} \notag \\
							&\iff & \frac{r_E n_A -r_A n_E}{r_E} \leq \frac{r_E}{d(r_E n_A -r_A n_E)} \label{last} 		
\end{eqnarray}
Since the integer $r_E n_A - r_A n_E$ is smaller than $0$, the inequality (\ref{last}) is equivalent to the following:
\begin{equation}
\frac{(r_E n_A - r_A n_E )^2}{r_E^2} \geq \frac{1}{d}. 
\label{iikae}
\end{equation}
Since $(r_E n_A- r_A n_E)^2 > 0$ and $\sqrt d \geq r_E$, the inequality (\ref{iikae}) holds. 
Hence we have $\frac{n_A}{r_A} \leq  \mf {a}$. 

Since $(r_A n_E- r_E n_A)>0$, $N_{A,E}(x,y)$ is strict increasing with respect to $y> 1/\sqrt{d}$. 
Since $(r_A n_E- r_E n_A)>0$ and $x < \frac{n_A}{r_A} \leq \mf{a}$, 
$N_{A,E}(x,y)$ is strict decreasing with respect to $x< \frac{n_A}{r_A}$. 
Hence we have $N_{A,E}(x,y) > N_{A,E}(\frac{n_A}{r_A}, \frac{1}{\sqrt d})$. 

If we prove $N_{A,E}(\frac{n_A}{r_A},y) > 0$, the proof will be complete. 
If $x= \frac{n_A}{r_A}$, we have $N_{A,E}(x,y)= \lambda _E \cdot \mf{Re} Z(A)$. 
Recall that the pair $(\beta, \omega)$ is in $\mca V(X)$ by $\omega ^2 >2$. 
Thus we have $\mf{Re} Z(A)>0$.  
We have proved the assertion. \vspace{5pt} \\
%%%%%%%%%%%%%%%%%%%%%%
\textit{The Proof of (2). }

By the assumption, we have $\frac{n_E}{r_E} < \frac{n_A}{r_A}< x$ and $r_A n_E - r_E n_A < 0$. 
In addition, both $\lambda _E$ and $\lambda _A $ are negative. 
Similarly to the case $(1)$, we have 
\begin{eqnarray*}
\arg Z(E) < \arg Z(A)	&\iff & \frac{\mf{Re} Z(E)}{\lambda _E} < \frac{\mf{Re}Z (A)}{\lambda _A} \\
						&\iff & 0 > N_{A,E}(x,y).
\end{eqnarray*}
We have the same formula as (\ref{N}) for $N_{A,E}(x,y)$ with two differences. 
One is $(r_A n_E - r_E n_A) < 0$ (this is obvious). The other is $\mf{a} \leq \frac{n_A}{r_A}$.  
So we shall prove the second inequality $\mf{a} \leq \frac{n_A}{r_A}$. 
%Then we know $\frac{n_A}{r_A} \geq \mf{a}=\frac{1}{2}\Bigl( \frac{n_A}{r_A}+ \frac{n_E}{r_E}-
%					\frac{r_E}{d r_A (r_A n_E - r_E n_A) }\Bigr)$. 
In fact
\begin{eqnarray}
\frac{n_A}{r_A} \geq \mf{a}	&\iff & \frac{n_A}{r_A}- \frac{n_E}{r_E} \geq \frac{r_E}{d r_A (r_E n_A -r_A n_E)} \notag \\
							&\iff & \frac{(r_E n_A -r_A n_E)^2}{r_E^2} \geq \frac{1}{d}. \label{Last} 		
\end{eqnarray}
The inequality (\ref{Last}) holds by $\sqrt{d} \geq r_E$. 

Since $r_A n_E -r_E n_A$ is negative, $N_{A,E}(x,y)$ is strict decreasing to $y> 1/\sqrt{d}$. 
Similarly to $(1)$, since the inequality $\mf{a} \leq \frac{n_A}{r_A}$ holds, $N_{A,E}(x,y)$ is strict decreasing with respect to $x > \frac{n_A}{r_A}$. 
Thus we have $N_{A,E}(x,y) < N_{A,E}(\frac{n_A}{r_A}, \frac{1}{\sqrt{d}})$. 
Hence it is enough to show $N_{A,E}(\frac{n_A}{r_A},y) < 0$. 
This follows from $\omega ^2 >2$. 
So we have proved the assertion $(2)$. 
\end{proof}

Now we are ready to prove the main theorem of this section. 

\begin{thm}\label{4.7}
Let $(X,L)$ be a generic K3 with $\deg X =2d$, 
$\sigma _{(\beta,\omega)}$ in $V(X)_{>2}$ and 
$E$ a torsion free sheaf with $v(E)^2=0$ and $\rank E \leq \sqrt d$. 

$(1)$ Assume that $E$ is Gieseker stable and $\beta \omega < \mu_{\omega}(E)$. 
Then $E$ is $\sigma_{(\beta,\omega)}$-stable. 

$(2)$ Assume that $E$ is $\mu$-stable locally free and $\mu _{\omega }(E) \leq \beta  \omega$. 
Then $E$ is $\sigma _{(\beta,\omega)}$-stable.  
\end{thm}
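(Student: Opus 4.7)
The plan is to prove both parts by contradiction, combining the structural Propositions \ref{4.2} and \ref{4.5} (which produce a would-be destabilizing spherical-like sheaf $S$) with the numerical Lemma \ref{4.6} (which forces the phase inequality in the opposite direction), thereby obtaining a contradiction.

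First I would verify that the hypotheses place $E$ in the correct slice of $\mca{P}$. For (1), since Gieseker stability implies $\mu$-semistability, we have $\mu_{\omega}^{-}(E)=\mu_{\omega}(E)>\beta\omega$, so by Remark \ref{2.4} (2), $E\in\mca{T}_{(\beta,\omega)}\subset\mca{P}((0,1])$. For (2), $\mu$-stability gives $\mu_{\omega}^{+}(E)=\mu_{\omega}(E)\leq\beta\omega$, so $E\in\mca{F}_{(\beta,\omega)}\subset\mca{P}((-1,0])$. This ensures that the propositions above apply.

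Next, for (1), suppose $E$ is not $\sigma_{(\beta,\omega)}$-stable. Either $E$ fails to be $\sigma$-semistable, in which case Proposition \ref{4.2} (1) produces a torsion free $\sigma$-stable sheaf $S$ with $\beta\omega<\mu_{\omega}(S)<\mu_{\omega}(E)$, $v(S)^{2}=-2$ and $\arg Z(E)<\arg Z(S)$; or $E$ is $\sigma$-semistable but not $\sigma$-stable, in which case Proposition \ref{4.2} (2) yields such an $S$ with $\arg Z(E)=\arg Z(S)$. In either alternative, $S$ and $E$ satisfy the hypotheses of Lemma \ref{4.6} (1): the ranks and Mukai vector squares match, and $\beta\omega<\mu_{\omega}(S)<\mu_{\omega}(E)$. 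Therefore $\arg Z(S)<\arg Z(E)$, contradicting the inequality $\arg Z(E)\leq\arg Z(S)$ produced by Proposition \ref{4.2}. Hence $E$ must be $\sigma_{(\beta,\omega)}$-stable.

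For (2), the argument is entirely symmetric: assuming $E$ is not $\sigma_{(\beta,\omega)}$-stable, Proposition \ref{4.5} produces a torsion free $\sigma$-stable sheaf $S$ with $v(S)^{2}=-2$, $\mu_{\omega}(E)<\mu_{\omega}(S)<\beta\omega$, and $\arg Z(S)\leq\arg Z(E)$. But Lemma \ref{4.6} (2) then forces $\arg Z(E)<\arg Z(S)$, a contradiction. The only step requiring any real care is checking that the hypotheses of Lemma \ref{4.6} are met exactly as stated (in particular the rank bound $0<\rank E\leq\sqrt{d}$ and the strict slope sandwiching), and that the alternative involving $\arg Z(E)=\arg Z(S)$ already contradicts the strict inequality delivered by the lemma. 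Everything else is simply assembling the prepared ingredients; I do not foresee any genuine obstacle beyond this bookkeeping.
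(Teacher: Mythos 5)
Your proposal is correct and follows essentially the same route as the paper: place $E$ in $\mca P((0,1])$ or $\mca P((-1,0])$ via Remark \ref{2.4}, then derive a contradiction between the destabilizing sheaf $S$ produced by Propositions \ref{4.2}/\ref{4.5} and the phase inequalities of Lemma \ref{4.6}. The only difference is that you spell out the slice-membership check and the semistable/non-semistable case split, which the paper leaves implicit.
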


\begin{proof}
We put $\sigma _{(\beta,\omega)} = (Z,\mca P)$. 
The assumption of $(1)$ implies $E \in \mca P((0,1])$ and 
that of $(2)$ implies $E \in \mca P((-1,0])$. 
\vspace{5pt}

\textit{Proof of (1)}. 
Suppose to the contrary that $E$ is not $\sigma _{(\beta,\omega)}$-stable. By Proposition \ref{4.2}, 
there is a $\sigma _{(\beta,\omega)}$-stable sheaf $S$ with $v(S)^2 =-2$, $\mu_{\omega}(S) < \mu _{\omega}(E)$ 
and  $\arg Z(S) \geq \arg Z(E)$. 
This contradicts Lemma \ref{4.6} (1). Hence $E$ is $\sigma _{(\beta,\omega)}$-stable. 
\vspace{5pt} 
%%%%%%%%%%%%%%%%%%%

\textit{Proof of (2)}. 
Suppose to the contrary that $E$ is not $\sigma _{(\beta,\omega)}$-stable. 
Then by Lemma \ref{4.5}, 
there is a $\sigma _{(\beta,\omega)}$-stable sheaf $S$ with $\mu_{\omega}(E) < \mu_{\omega}(S)$, $v(S)^2 =-2$ 
and $\arg Z(S) \leq \arg Z(E)$. 
This contradicts Lemma \ref{4.6} (2). 
Hence $E$ is $\sigma _{(\beta,\omega)}$-stable. 
\end{proof}

\begin{cor}\label{4.8}
Let $(X,L)$ be a generic K3 with $\deg X =2d$ and 
let $E$ be a $\mu$-stable locally free sheaf with $\rank E \leq \sqrt d$. 
Then for all $\sigma  \in U(X)_{>2}$, $E$ is $\sigma $-stable. 
\end{cor}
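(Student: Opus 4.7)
The plan is to reduce Corollary \ref{4.8} to Theorem \ref{4.7} via the $\tilde{GL}^+(2,\mathbb{R})$-action, then do a dichotomy on the sign of $\mu_\omega(E)-\beta\omega$. Implicitly the corollary inherits the hypothesis $v(E)^2=0$ from the ambient framework (otherwise Theorem \ref{4.7} is not directly applicable), so I will take $E$ to be a $\mu$-stable locally free sheaf with $v(E)^2=0$ and $\rank E\leq\sqrt{d}$.

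First, I would unpack the definition $U(X)_{>2}=V(X)_{>2}\cdot\tilde{GL}^+(2,\mathbb{R})$ given at the end of Section~3.2. Given any $\sigma\in U(X)_{>2}$, there exists $(\beta,\omega)\in\mathcal{V}(X)_{>2}$ and $\tilde{g}\in\tilde{GL}^+(2,\mathbb{R})$ with $\sigma=\sigma_{(\beta,\omega)}\cdot\tilde{g}$. By Remark \ref{r1.7}, $\sigma$-stability and $\sigma_{(\beta,\omega)}$-stability of $E$ are equivalent, so it suffices to show that $E$ is $\sigma_{(\beta,\omega)}$-stable for every $\sigma_{(\beta,\omega)}\in V(X)_{>2}$.

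Second, I would split into two cases according to the real number $\mu_\omega(E)-\beta\omega$. If $\mu_\omega(E)\leq\beta\omega$, then by Remark \ref{2.4}(2) the sheaf $E$ lies in $\mathcal{F}_{(\beta,\omega)}\subset\mathcal{P}((-1,0])$, and Theorem \ref{4.7}(2) applies directly to give $\sigma_{(\beta,\omega)}$-stability of $E$. If instead $\beta\omega<\mu_\omega(E)$, then $E\in\mathcal{T}_{(\beta,\omega)}\subset\mathcal{P}((0,1])$ by the same remark; since $\mu$-stability implies Gieseker stability for torsion free sheaves (recalled after equation (\ref{Hilbert})), we may apply Theorem \ref{4.7}(1) to conclude that $E$ is $\sigma_{(\beta,\omega)}$-stable in this case too. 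Combining both cases finishes the proof.

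There is no genuine obstacle here: the entire content is packaged in Theorem \ref{4.7}, and Corollary \ref{4.8} is just the observation that its two clauses together cover the whole of $V(X)_{>2}$ once one uses $\mu$-stable $\Rightarrow$ Gieseker stable, and that the $\tilde{GL}^+(2,\mathbb{R})$-orbit of $V(X)_{>2}$ inherits this stability by Remark \ref{r1.7}. The only point one should be a little careful about is that the two clauses of Theorem \ref{4.7} genuinely partition $V(X)_{>2}$ (the boundary case $\mu_\omega(E)=\beta\omega$ is handled by clause (2), which allows equality).
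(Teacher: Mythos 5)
Your proposal is correct and follows essentially the same route as the paper: reduce to $V(X)_{>2}$ via the $\tilde{GL}^+(2,\mathbb{R})$-action using Remark \ref{r1.7}, then invoke Theorem \ref{4.7}. The paper's own proof is just a terser version of this (it does not even spell out the dichotomy on $\mu_\omega(E)$ versus $\beta\omega$), and your observation that the semi-rigidity hypothesis $v(E)^2=0$ is implicitly carried over from the section's standing assumptions is the right reading of the statement.
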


\begin{proof}
Let $\sigma \in U(X)$ and $\tilde g \in \tilde{GL}^+( 2,\bb R)$. 
$E$ is $\sigma$-stable if and only if $E$ is $\sigma \cdot \tilde g$-stable. 
Thus we have finished the proof by Theorem \ref{4.7}. 
\end{proof}

The assumption $\rank E \leq \sqrt d$ may seem to be artificial but it is just the same as the condition $r \leq s$ in Theorem \ref{3.3}. 
In Example \ref{5.3} we shall show that the assumption is optimal.

\section{$\sigma$-stability of spherical sheaves}

%This is version 2. 

Let the notations be as in Section $4$. 
In this section, for a generic K3 $(X,L)$, 
we prove that some spherical sheaves are $\sigma$-stable for all $\sigma \in U(X)_{>2}$. 
We start in this section with a brief review of spherical objects. 
An object $S \in D(X)$ is called a \textit{spherical object}
\footnote{This definition is ``K3'' version. 
More generalized definition of spherical object appears in \cite[Chapter 8]{Huy} or \cite{ST}. } 
if the morphism space $\Hom_X^i(S,S)$ is
\[
\Hom ^i_X(S,S) = \begin{cases}\bb C & (i=0,2) \\ 0 & (\mbox{otherwise}).  \end{cases} 
\] 
By virtue of \cite{ST}, we can define an autoequivalence $T_S$ called a \textit{spherical twist}. 
For $E \in D(X)$ the complex $T_S(E)$ is isomorphic to
\begin{equation}
T_S(E) \simeq \mbox{the mapping cone of }\Big( \Hom _X(S, E[*]) \otimes S \stackrel{\mathit{ev}}{\to} E  \Big) , \label{5}
\end{equation}
where $\mathit{ev}$ is the evaluation map. 

In general it is difficult to compute $T_S(E)$, but much easier to compute the Mukai vector $v(T_S(E))$. 
In fact, we have 
\begin{equation}
v(T_S(E)) = v(E) + \< v(E) , v(S)  \> v(S). \label{twist}
\end{equation}
Recall that any equivalence $\Phi :D(Y) \to D(X)$ induces an isometry $\Phi ^H : \mca N(Y) \to \mca N(X)$. 
Since $v(\Phi (E)) = \Phi ^H (v(E) )$, we have $T_S^H \circ T_S^H = id _{\mca N(X)}$ by (\ref{twist}). 

%%%%%%%%%%%%%%%%%%%%%%%%%%%%%%%%%%%%%

\begin{ex}\label{5.5}
Let $X$ be a projective K3 surface. Then any line bundle $M$ is spherical. 
The spherical twist $T_M(\mca O_x)$ of $ \mca O_x$ by $M$ is $\mca I_x \otimes M [1]$ where $\mca I_x$ is the ideal sheaf of the closed point $x \in X$. 
This follows from the formula (\ref{5})
\end{ex}

\begin{prop}\label{Mukai}
Let $(X,L)$ be a generic K3 and $S$ a spherical sheaf.  
Then $S$ is a $\mu$-stable locally free sheaf. 
\end{prop}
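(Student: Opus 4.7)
The three ingredients --- positive rank, $\mu$-stability, and local freeness --- are handled in turn.

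\emph{Mukai vector.} Since $S$ is spherical, $\hom^0_X(S,S) = 1$ and $\hom^1_X(S,S) = 0$, so the lemma at the end of Section 3.1 gives $v(S)^2 = -2$. Writing $v(S) = r_S \+ n_S L \+ s_S$ and using $L^2 = 2d$, we get $2d n_S^2 - 2 r_S s_S = -2$, equivalently $r_S s_S = d n_S^2 + 1$. This forces $r_S \geq 1$ (so $S$ has positive rank) and $\gcd(r_S, n_S) = 1$; the coprimality is useful because, on a Picard rank $1$ K3, it guarantees that $\mu$-semistability of a sheaf of Mukai vector $v(S)$ automatically upgrades to $\mu$-stability (a destabilizing subsheaf of equal slope would require a common factor of $r_S$ and $n_S$).

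\emph{$\mu$-stability.} It therefore suffices to prove $\mu$-semistability, in two substeps. First I would rule out torsion: a nonzero torsion subsheaf $T \subset S$ with torsion-free quotient $F = S/T$ satisfies $\Hom(F, T) = 0$, and chasing the long exact sequences of $\Hom$ applied to $0 \to T \to S \to F \to 0$, while using $\hom^0_X(S,S) = 1$ and $\Ext^1_X(S,S) = 0$, leads to a contradiction with simplicity. Next, let $A \subset S$ be the maximal destabilizing subsheaf with $B = S/A$, so that $\mu_{\omega}(A) > \mu_{\omega}(B)$ and both $A$, $B$ are $\mu$-semistable torsion-free of positive rank. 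Slope comparison forces $\Hom_X(A,B) = 0 = \Hom_X(B,A)$; Serre duality then gives $\Ext^2_X(A,B) = 0 = \Ext^2_X(B,A)$; and Riemann--Roch in the form $\chi(E,F) = -\<v(E), v(F)\>$ yields $\<v(A), v(B)\> \geq 0$. Expanding $v(S)^2 = v(A)^2 + v(B)^2 + 2\<v(A), v(B)\> = -2$ gives $v(A)^2 + v(B)^2 \leq -2$. Combining this with Bogomolov-type lower bounds, obtained by refining $A$ and $B$ to their $\mu$-stable JH factors (each simple, hence satisfying $v^2 \geq -2$), and with the arithmetic constraint $r_S s_S = d n_S^2 + 1$, produces the sought contradiction.

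\emph{Local freeness.} Let $S^{**}$ be the double dual. On the smooth surface $X$ it is reflexive, hence locally free, and inherits $\mu$-stability (in particular simplicity) from $S$. The cokernel $Q = S^{**}/S$ is $0$-dimensional of length $\ell \geq 0$, with $v(S^{**}) = v(S) + (0 \+ 0 \+ \ell)$. If $\ell > 0$, the Mukai pairing gives
\[
v(S^{**})^2 = v(S)^2 + 2\<v(S),\, 0 \+ 0 \+ \ell\> = -2 - 2 r_S \ell \leq -4,
\]
contradicting $v(S^{**})^2 \geq -2$, which holds for the simple sheaf $S^{**}$ by the same lemma. Hence $\ell = 0$ and $S = S^{**}$ is locally free. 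The main obstacle is the $\mu$-semistability step --- essentially Mukai's classical theorem that simple rigid sheaves on a K3 are $\mu$-stable --- where the technical crux is controlling $v^2$ of the possibly non-stable $\mu$-semistable factors by descending to their $\mu$-stable JH refinements and exploiting the Picard rank $1$ arithmetic.
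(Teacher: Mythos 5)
Your three-way decomposition is sound, and two of the three steps check out. The torsion step is essentially the paper's own argument (Mukai's rigidity inheritance applied to $0 \to t(S) \to S \to S/t(S) \to 0$, plus $v(T)^2 \ge 0$ for a torsion sheaf $T$ when the Picard number is one), and your double-dual computation for local freeness is a correct, self-contained replacement for the paper's citation of Mukai's Proposition 3.3. Note also that the paper short-circuits the stability step entirely: it observes $\gcd(r_S,n_S)=1$ and invokes \cite[Lemma 1.2.14]{HL}, leaving $\mu$-semistability implicit, whereas you try to prove it. Unfortunately, that is where your sketch has a genuine gap, in two places. First, $\Hom_X(B,A)=0$ is \emph{not} forced by slope comparison --- nonzero maps from smaller slope to larger slope exist in abundance (e.g.\ $\mca O_X \to L$); what kills $\Hom(B,A)$ here is simplicity of $S$, since a nonzero $B\to A$ produces a nonzero, non-invertible endomorphism $S \twoheadrightarrow B \to A \hookrightarrow S$.

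More seriously, the closing move does not close. From $\< v(A),v(B) \> \ge 0$ you deduce $v(A)^2+v(B)^2 \le -2$ and then propose to contradict this with \emph{lower} bounds obtained from $\mu$-stable JH factors having $v^2 \ge -2$. But such refinements only yield bounds like $v(A)^2 \ge -2k^2$ (already $A=G^{\+ k}$ with $G$ spherical gives $v(A)^2=-2k^2$), and Bogomolov only gives $v^2 \ge -2r^2$ for a $\mu$-semistable sheaf; neither contradicts $v(A)^2+v(B)^2\le -2$. What is actually needed is the \emph{upper} bound $v(A)^2\le -2$ and $v(B)^2\le -2$, and it comes from the very tool you already used for the torsion step: since $\Hom(A,B)=0$, Mukai's inequality gives $\hom^1(A,A)+\hom^1(B,B)\le \hom^1(S,S)=0$, so $A$ and $B$ are rigid and $v^2=-2\hom^0\le -2$ for each. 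Then the lattice identity
\[
\frac{\< v(A),v(B) \>}{r_Ar_B} \;=\; -d\Bigl(\frac{n_A}{r_A}-\frac{n_B}{r_B}\Bigr)^2 + \frac{v(A)^2}{2r_A^2}+\frac{v(B)^2}{2r_B^2} \;<\;0
\]
contradicts $\< v(A),v(B) \>\ge 0$ outright; no JH refinement and no use of $r_Ss_S=dn_S^2+1$ is needed. With that repair your argument is complete and in fact more detailed than the paper's.
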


\begin{proof}
We first show that $S$ is locally free. 
Let $t(S)$ be the maximal torsion subsheaf of $S$. 
Then we have the following exact sequence of sheaves:
\[
\begin{CD}
0 @>>> t(S) @>>> S @>>> S/ t(S) @>>> 0 .
\end{CD}
\]
Since $\Hom (t(S), S/ t(S))=0$, the result \cite[Corollary 2.8]{Muk} gives us the following inequality:
\[
0 \leq \hom^1(t(S),t(S)) + \hom^1(S/t(S), S/t(S)) \leq \hom^1(S,S) =0. 
\]
Thus $v(t(S))^2 <0$ unless $t(S)=0$. 
However $v(t(S))^2 \geq 0$ for $t(S)$ is torsion and $S$ is of Picard number $1$. 
Hence $t(S)=0$. 
Thus $S$ is torsion free. 
Then the local-freeness of $S$ comes from \cite[Proposition 3.3]{Muk}. 

Finally we show that $S$ is $\mu$-stable. 
Since $v(S)^2=-2$, the greatest common divisor of $(r_S,n_S)$ is $1$. 
Then the $\mu$-stability of $S$ follows from \cite[Lemma 1.2.14]{HL} under the assumption that the Picard number is one. 
\end{proof}

The following lemma is a modified version of Lemma \ref{4.6}. 

\begin{lem}\label{5.1}
Let $(X,L)$ be a generic K3 with $\deg X = 2d$, $\sigma_{(\beta,\omega)} \in V(X)_{>2}$ 
and both $A $ and $E$ spherical sheaves 
with $\rank E \leq \sqrt{d}$. 

$(1)$ Assume that $\beta \omega  < \mu_{\omega}(A) < \mu_{\omega}(E)$. Then $0 < \arg Z(A) < \arg Z(E) < 1$.  

$(2)$ Assume that $\mu_{\omega}(E)< \mu _{\omega}(A) < \beta  \omega$. Then $-1 < \arg Z(E) < \arg Z(A) < 0$. 
\end{lem}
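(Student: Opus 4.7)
My plan is to follow the strategy of the proof of Lemma \ref{4.6} essentially verbatim, adapting only for the fact that $v(E)^{2}$ is now $-2$ instead of $0$. Writing $(\beta,\omega)=(xL,yL)$ and $\lambda_{?}=n_{?}-r_{?}x$ as before, formula (\ref{4.0}) yields
\[
Z(E)=-\tfrac{1}{r_{E}}+dr_{E}\Bigl(y^{2}-\tfrac{\lambda_{E}^{2}}{r_{E}^{2}}\Bigr)+2\sqrt{-1}\,dy\lambda_{E},
\]
the only new feature compared with Lemma \ref{4.6} being the extra $-1/r_{E}$ term in $\mf{Re}Z(E)$. Tracing this through the computation of $N_{A,E}(x,y)$ one sees that it introduces an additional $\lambda_{A}/r_{E}$ summand. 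After completing the square in $x$, the resulting quadratic has the same leading coefficient $d(r_{A}n_{E}-r_{E}n_{A})=:dk$, but the vertex is shifted by $\mathfrak{a}'-\mathfrak{a}=\tfrac{r_{A}}{2dk r_{E}}$, where $\mathfrak{a}$ is exactly the quantity from the proof of Lemma \ref{4.6}.

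For case (1) the imaginary parts $2dy\lambda_{A}$ and $2dy\lambda_{E}$ are both strictly positive, so both $\arg Z(A)$ and $\arg Z(E)$ lie in $(0,1)$ automatically, and only the middle inequality $\arg Z(A)<\arg Z(E)\iff N_{A,E}(x,y)>0$ needs checking. Since $k>0$ here, the shift is positive, and combined with the inequality $\mathfrak{a}\geq n_{A}/r_{A}$ proved in Lemma \ref{4.6} (which is where the hypothesis $r_{E}\leq\sqrt{d}$ enters) we get $\mathfrak{a}'>n_{A}/r_{A}$. The upward parabola $N_{A,E}(\cdot,y)$ is therefore strictly decreasing on $(-\infty,n_{A}/r_{A})$, while $N_{A,E}(x,\cdot)$ is strictly increasing on $y>1/\sqrt{d}$ since the coefficient of $y^{2}$ is $dk>0$. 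Hence the infimum of $N_{A,E}$ on the relevant region is approached as $(x,y)\to(n_{A}/r_{A},1/\sqrt{d})$. At $x=n_{A}/r_{A}$ the new $\lambda_{A}/r_{E}$ summand vanishes because $\lambda_{A}=0$, leaving $N_{A,E}(n_{A}/r_{A},y)=\lambda_{E}\cdot\mf{Re}Z(A)$, and positivity follows because $\lambda_{E}>0$ and $Z(A)\in\bb R_{>0}$ by the defining property of $\mca V(X)$ applied to the spherical class $v(A)\in\Delta^{+}(X)$.

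Case (2) is the sign-reversed mirror of case (1): now $k<0$ and $\lambda_{A},\lambda_{E}<0$, and the goal is $N_{A,E}<0$. The shift $\tfrac{r_{A}}{2dk r_{E}}$ is now negative, so $\mathfrak{a}'<\mathfrak{a}\leq n_{A}/r_{A}$ by the analogous inequality from Lemma \ref{4.6}, placing the vertex of the (now downward) parabola $N_{A,E}(\cdot,y)$ to the left of the region $x>n_{A}/r_{A}$. So $N_{A,E}(\cdot,y)$ is strictly decreasing there, and strictly decreasing in $y>1/\sqrt{d}$ as well; at $x=n_{A}/r_{A}$ one finds $N_{A,E}=\lambda_{E}\mf{Re}Z(A)<0$ since $\lambda_{E}<0$ and $\mf{Re}Z(A)>0$.

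The one step requiring care is verifying that the shift $\mathfrak{a}'-\mathfrak{a}$ goes in the direction favourable to the desired monotonicity in each case; but this follows immediately from the sign of $k$, and once it is noted, the remainder of the argument is a direct translation of the proof of Lemma \ref{4.6}, with the hypothesis $\rank E\leq\sqrt{d}$ entering through exactly the same channel.
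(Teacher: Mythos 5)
Your proof is correct and follows essentially the same route as the paper: both reduce the claim to the sign of $N_{A,E}(x,y)$, complete the square in $x$, use $\rank E\leq\sqrt d$ to place the vertex on the favourable side of $n_A/r_A$, exploit monotonicity in $x$ and $y$ to reduce to the corner $(n_A/r_A,1/\sqrt d)$, and conclude from $\mf{Re}Z(A)>0$ via the defining property of $\mca V(X)$. The only (cosmetic) difference is that you obtain the vertex inequality $\mf{a}'>n_A/r_A$ by adding the shift $r_A/(2dkr_E)$ to the inequality already proved in Lemma \ref{4.6}, whereas the paper re-derives it directly as $(r_En_A-r_An_E)^2>(r_E^2-r_A^2)/d$; both are valid.
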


\begin{proof}
Since $\mr{NS}(X) = \bb Z \cdot L$, we can put 
\[
\beta = x L , \omega =y L, v(E)= r_E \+ n_E L \+ s_E, \mbox{ and }
v(A) = r_A \+ n_A L \+ s_A .
\]
Then, by the formula (\ref{4.0}) in Section 4, we have 
\begin{eqnarray*}
	& Z(E) = -\frac{1}{r_E} + d r_E(y^2 - \frac{\lambda _E^2}{r_E^2}) + 2 \sqrt{-1}d y \lambda _E & \mbox{and} \\
	& Z(A) = -\frac{1}{r_A} + d r_A(y^2 - \frac{\lambda _A^2}{r_A^2}) + 2 \sqrt{-1}d y \lambda _A, & 
\end{eqnarray*}
where $\lambda _E = n_E -r_E x$ and $\lambda _A = n_A -r_A x$. 

We only prove $(1)$, 
because the proof of $(2)$ is essentially the same as not only the proof of $(1)$ but also it of Lemma \ref{4.6}. 

Since both $\lambda _A$ and $\lambda _E$ are positive by the assumption, we know that 
\[
\arg Z(A) < \arg Z(E) \iff N_{A,E}(x,y) > 0.
\]
Similarly to Lemma \ref{4.6}, we have 
\begin{eqnarray*}
N_{A,E}(x,y)	&=&	dy^2(r_A \lambda _E -r_E \lambda _A) + d \lambda _A \lambda _E 
					\Big( \frac{\lambda _E}{r_E} -\frac{\lambda _A}{r_A} \Big) + 
					\frac{\lambda _A}{r_E} -\frac{\lambda _E}{r_A} \\
				&=&d(r_A n_E -r_E n_A)y^2 + d(r_A n_E -r_E n_A)( x- \mf{a} )^2 \\
				& &+  (\mbox{other terms}),  
\end{eqnarray*}
where $\mf{a}$ is 
\[
\mf {a}=\frac{1}{2}\Big( \frac{n_A}{r_A} + \frac{n_E}{r_E} + \frac{1}{d(r_An_E- r_E n_A)}
					\Big( \frac{r_A}{r_E}- \frac{r_E}{r_A} \Big)  \Big).
\]

Then we shall show that $\frac{n_A}{r_A} < \mf{a}$. 
Since the integer $r_E n_A - r_A n_E$ is negative, we have 
\begin{eqnarray}
\frac{n_A}{r_A} < \mf {a}	&\iff &	\frac{n_A}{r_A} - \frac{n_E}{r_E} < 
										\frac{1}{d(r_A n_E -r_E n_A)}\Big(\frac{r_A}{r_E}-\frac{r_E}{r_A}  \Big) \notag \\
								&\iff &	{(r_E n_A - r_A n_E)^2} > \frac{r_E^2 -r_A ^2}{d} .\label{tofu}
\end{eqnarray}
By the assumption $0 < \rank E \leq \sqrt{d}$ we have $(r_E n_A - r_A n_E)^2 \geq \frac{r_E^2}{d}$. 
Thus the last inequality (\ref{tofu}) holds. 

Since $\frac{n_A}{r_A} < \mf{a}$, $N_{A,E}(x,y)$ is strict decreasing with respect to $x < \frac{n_A}{r_A}$. 
Moreover by $r_A n_E -r_E n_A >0$, $N_{A,E}(x.y)$ is strict increasing with respect to $y > \frac{1}{\sqrt{d}}$. 
Thus we have $N_{A,E}(x,y) > N_{A,E}(\frac{n_A}{r_A}, \frac{1}{\sqrt{d}})$. 
Thus it is enough to show that $N_{A,E}(\frac{n_A}{r_A},y)>0$. 
This follows from $\omega^2 >2$. 
Hence we have $N_{A,E}(x,y)>0$ for all $(\beta,\omega)$ satisfying the assumption. 
\end{proof}

In the same way as Theorem \ref{4.7}, we have the following proposition. 
%%%%%%%%%%%%%%%%%%%%%%%%%%%%%%%%%%%%

\begin{prop}\label{5.2}
Let $(X,L)$ be a generic K3 with $\deg X =2d$ and $E$ a spherical sheaf on $X$ with $\rank  E \leq \sqrt{d}$. 
Then $E$ is $\sigma$-stable for all $\sigma \in U(X)_{>2}$.  
\end{prop}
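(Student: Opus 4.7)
The plan is to mimic the proof of Theorem~\ref{4.7} while substituting Lemma~\ref{5.1} for Lemma~\ref{4.6}. First, using that $U(X)_{>2} = V(X)_{>2} \cdot \tilde{GL}^+(2,\bb R)$ together with Remark~\ref{r1.7} (which says $\sigma$-stability is preserved by the $\tilde{GL}^+(2,\bb R)$-action), it suffices to show that $E$ is $\sigma_{(\beta,\omega)}$-stable for every $\sigma_{(\beta,\omega)}=(Z,\mca P) \in V(X)_{>2}$. By Proposition~\ref{Mukai}, $E$ is automatically $\mu$-stable and locally free, and since $E$ is spherical one has $v(E)^2 = -2$, which in particular meets the hypothesis $v(E)^2 \leq 0$ of Propositions~\ref{4.2} and~\ref{4.5}.

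Next I would split into two cases using Remark~\ref{2.4}(2): if $\mu_{\omega}(E) > \beta\omega$ then $E \in \mca P((0,1])$, and the hypotheses of Proposition~\ref{4.2} are met; if $\mu_{\omega}(E) \leq \beta\omega$ then $E \in \mca P((-1,0])$, and those of Proposition~\ref{4.5} are met.

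In the first case, suppose for contradiction that $E$ is not $\sigma_{(\beta,\omega)}$-stable. Proposition~\ref{4.2} then furnishes a torsion-free $\sigma$-stable sheaf $S$ with $v(S)^2 = -2$, $\beta\omega < \mu_{\omega}(S) < \mu_{\omega}(E)$, and $\arg Z(S) \geq \arg Z(E)$. The key side remark is that such an $S$ is automatically spherical: $\sigma$-stability gives $\hom^0(S,S) = 1$, Serre duality then gives $\hom^2(S,S) = 1$, and the Riemann-Roch identity $\chi(S,S)=-\< v(S),v(S) \>=2$ combined with $v(S)^2 = -2$ forces $\hom^1(S,S) = 0$. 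Since both $S$ and $E$ are now spherical and $\rank E \leq \sqrt{d}$, Lemma~\ref{5.1}(1) applied to the pair $(S,E)$ yields the opposite inequality $\arg Z(S) < \arg Z(E)$, a contradiction. The second case is handled symmetrically using Proposition~\ref{4.5} and Lemma~\ref{5.1}(2): a destabilizer $S$ produced there satisfies $\mu_{\omega}(E) < \mu_{\omega}(S) < \beta\omega$ together with $\arg Z(S) \leq \arg Z(E)$, which contradicts Lemma~\ref{5.1}(2) applied to $(S,E)$.

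I expect the only subtlety is verifying that the destabilizer $S$ produced by Propositions~\ref{4.2} or~\ref{4.5} is genuinely spherical rather than merely satisfying $v(S)^2 = -2$, since Lemma~\ref{5.1} is formulated only for spherical pieces. This is not a serious obstacle, as the sphericity follows at once from the stability of $S$, Serre duality, and Riemann-Roch as indicated above.
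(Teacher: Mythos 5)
Your proof is correct and follows essentially the same route as the paper: reduce to $V(X)_{>2}$ via the $\tilde{GL}^+(2,\bb R)$-action, use Proposition~\ref{Mukai} to place $E$ in $\mca P((0,1])$ or $\mca P((-1,0])$, extract a destabilizer $S$ with $v(S)^2=-2$ from Proposition~\ref{4.2} or~\ref{4.5}, and contradict Lemma~\ref{5.1}. Your extra verification that $S$ is genuinely spherical (via simplicity, Serre duality and Riemann--Roch) is a worthwhile point the paper leaves implicit, since Lemma~\ref{5.1} is stated for spherical sheaves even though its proof only uses the Mukai vectors.
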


The proof is essentially the same as that of Theorem \ref{4.7}. 

\begin{proof}
We can assume that $\sigma = \sigma _{(\beta,\omega)} =(Z ,\mca P) \in V(X)_{>2}$. 
Since $E$ is $\mu$-stable by Proposition \ref{Mukai}, $ E \in \mca P((0,1])$ or $E \in \mca P((-1,0])$. 

Let $E \in \mca P((0,1])$. 
Assume to the contrary that $E$ is not $\sigma $-stable. 
From Proposition \ref{4.2}
we know that there is a $\sigma $-stable torsion free sheaf $S \in \mca P((0,1])$ with 
$v(S)^2= -2$, $\mu_{\omega}(S) < \mu_{\omega}(E)$ and $\arg Z(E) \leq \arg Z(S)$. 
However, by Lemma \ref{5.1}, we have $\arg Z(S) < \arg Z(E)$. 
This is contradiction.

Let $E \in \mca P((-1,0])$. 
Assume to the contrary that 
$E$ is not $\sigma$-stable. 
Then, by Proposition \ref{4.5}, 
there is a $\sigma$-stable sheaf $S'$ with $\mu_{\omega}(E) < \mu_{\omega}(S')$, $v(S')^2=-2$ and 
$\arg Z(S') \leq  \arg Z(E)$. 
However, by Lemma \ref{5.1}, we have $\arg Z(S') > \arg Z(E)$. 
So $E$ is $\sigma $-stable. 
\end{proof}

%%%%%%%%%%%%%%%%%%%%%%%

In Example \ref{5.3}, we show that the assumption on the rank of $E$ in Theorem \ref{4.7} is optimal. Namely we give an example of a Gieseker stable sheaf $E$ with $\rank E > \sqrt{d}$ which is not $\sigma$-stable for some $\sigma \in V(X)_{>2}$.

\begin{ex}\label{5.3}
Let $(X,L)$ be a generic K3 with $\deg X=2d$, and 
$E$ a Gieseker stable locally free sheaf with $\<v(E)\> ^2= \< r_E \+ L \+ s_E \>^2=0$ where $v(E) = r_E \+ L \+ s_E$ with $r_E > \sqrt{d}$. 
Then we claim that there is a $\sigma  \in V(X)_{>2}$ such that $E$ is not $\sigma$-semistable. 
To prove our claim, it is enough to find $\sigma _{(\beta,\omega)}= (Z,\mca P) \in V(X)_{>2}$ such that 
\begin{equation}
\arg Z(\mca O_X) > \arg Z (E). \label{katei} 
\end{equation} 

In fact, assume that such a stability condition $\sigma_0 \in V(X)_{>2}$ exists. 
By Lemma \ref{5.4} (below), we have $\chi(\mca O_X,E)>0$. 
Since $\mu _{\omega}(\mca O_X) < \mu_{\omega}(E)$, 
$\Hom_X^2(\mca O_X, E)^* =  \Hom _{\mca O_X}(E,\mca O_X)=0$. 
Thus we have 
\begin{equation}
0 < \chi(\mca O_X, E) = \hom ^0(\mca O_X,E) - \hom ^1(\mca O_X,E) \leq \hom ^0(\mca O_X,E). 
\label{kainoatai}
\end{equation}
Recall that $\mca O_X$ is $\sigma _0$-stable by Proposition \ref{5.2}. 
If $E$ is $\sigma_0$-semistable, we have $\Hom _X(\mca O_X, E) = 0$ by the assumption (\ref{katei}). 
This contradicts (\ref{kainoatai}). 
Hence $E$ is not $\sigma _0$-semistable. 

We finally show that there is a $\sigma _{(\beta,\omega)}\in V(X)_{>2}$ satisfying the condition (\ref{katei}). 
We put $(\beta,\omega)=(x L, y L)$. 
Let $N_{A,E}(x,y)$ be the function defined by (\ref{NAE}). 
Since $v(\mca O_X)=1 \+ 0 \+ 1$ and $v(E) = r_E \+ L \+ s_E$, we have 
\[
N_{\mca O_X,E}(x,y) = dx^2 + \Big( r_E -\frac{d}{r_E} \Big)x + dy^2-1.
\]
Take $x<0$. Then the condition (\ref{katei}) is equivalent to 
\[
N_{\mca O_X,E}(x,y)<0 .
\]
Let us consider the special case $dy^2=1$. 
This means $\omega ^2=2$. 
If $dy^2=1$, the solutions of $N_{\mca O_X,E}(x,\sqrt{1/d})=0$ are 
\[
x=0, \alpha \mbox{, where } \alpha =\frac{d-r_E ^2}{r_E d}.
\]
The region defined by $N_{\mca O_X,E}(x,y) < 0$ is the inside of the following circle:
\begin{center}
\includegraphics[height=40mm]{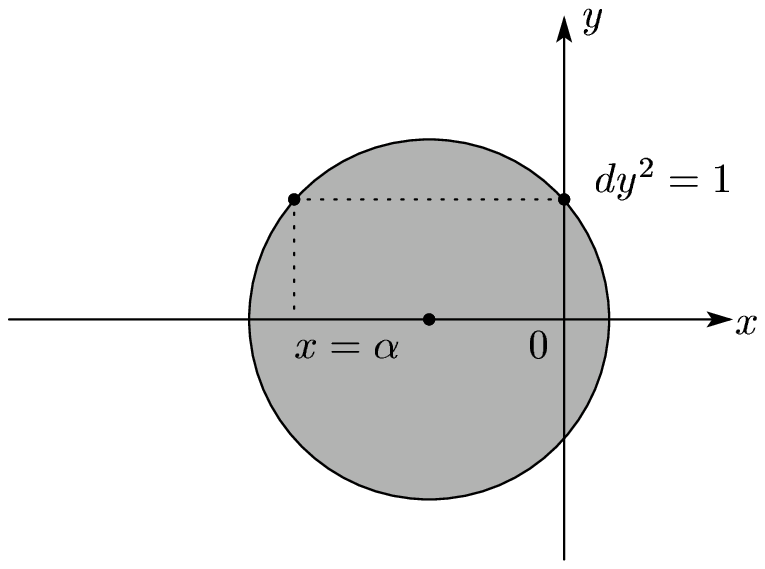}
\end{center}
Hence we can choose $\sigma_{(\beta,\omega)} \in V(X)_{>2}$ so that 
$x < 0$ and $N_{\mca O_X,E}(x,y) < 0$. 
\end{ex}

\begin{lem}\label{5.4}
Let $(X,L)$ be a generic K3, let $E $ be a sheaf with $v(E)^2 \leq 0$ and $\rank E >0$, 
and let $A$ be a sheaf with $v(A)^2 <0 $. 
Then we have $\chi( A,E) > 0$. 
\end{lem}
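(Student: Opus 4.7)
\textit{Proof plan.} The plan is to reduce the claim to an elementary numerical inequality using the Picard rank one hypothesis and Riemann--Roch, and then to finish by AM--GM.

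First I would write, following the notation of Section~4, $v(A) = r_A \oplus n_A L \oplus s_A$ and $v(E) = r_E \oplus n_E L \oplus s_E$ in $\mca N(X) = \bb Z \oplus \bb Z L \oplus \bb Z$, where $L^2 = 2d$. By Riemann--Roch on the K3 surface $X$,
\[
\chi(A,E) = -\<v(A),v(E)\> = r_A s_E + r_E s_A - 2d\, n_A n_E,
\]
so the problem becomes showing $r_A s_E + r_E s_A > 2d\, n_A n_E$.

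Next I would extract inequalities from the hypotheses on $v(A)^2$ and $v(E)^2$. From $v(A)^2 = 2d\, n_A^2 - 2 r_A s_A < 0$ one obtains $r_A s_A > d\, n_A^2 \geq 0$; in particular $r_A$ must be nonzero (otherwise $v(A)^2 = 2d\, n_A^2 \geq 0$, a contradiction), and since $A$ is a sheaf, $r_A > 0$, forcing $s_A > 0$. Similarly from $v(E)^2 = 2d\, n_E^2 - 2 r_E s_E \leq 0$ and $r_E > 0$ one gets $r_E s_E \geq d\, n_E^2 \geq 0$, hence $s_E \geq 0$.

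To finish, I would split into two cases. In the main case $s_E > 0$, the AM--GM inequality combined with the two estimates above yields
\[
r_A s_E + r_E s_A \;\geq\; 2\sqrt{r_A s_A \cdot r_E s_E} \;>\; 2\sqrt{d\, n_A^2 \cdot d\, n_E^2} \;=\; 2d\,|n_A n_E| \;\geq\; 2d\, n_A n_E,
\]
so $\chi(A,E) > 0$. In the degenerate case $s_E = 0$, the inequality $r_E s_E \geq d\, n_E^2$ forces $n_E = 0$, whence $r_A s_E + r_E s_A - 2d\, n_A n_E = r_E s_A > 0$. The whole argument is essentially one clean AM--GM estimate once the Mukai-pairing identity is unpacked; I do not expect a genuine obstacle, the only subtlety being the bookkeeping of the $s_E = 0$ boundary case.
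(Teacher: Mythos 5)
Your proof is correct, and it rests on the same starting point as the paper's: expanding $\chi(A,E)=-\<v(A),v(E)\>=r_As_E+r_Es_A-n_An_EL^2$ in coordinates and exploiting $r_A>0$ (forced by $v(A)^2<0$ and Picard rank one) together with the sign hypotheses on the self-pairings. Where you diverge is in the final estimate: the paper divides by $r_Ar_E$ and completes the square, obtaining the identity
\[
\frac{\chi(A,E)}{r_Ar_E}=\frac{L^2}{2}\Bigl(\frac{n_A}{r_A}-\frac{n_E}{r_E}\Bigr)^2-\frac{v(E)^2}{2r_E^2}-\frac{v(A)^2}{2r_A^2},
\]
which is manifestly positive since the first term is $\geq 0$, the second is $\geq 0$, and the third is $>0$; positivity drops out in one line with no need to determine the signs of $s_A$ and $s_E$ or to treat $s_E=0$ separately. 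Your route instead derives $s_A>0$ and $s_E\geq 0$ from $r_As_A>dn_A^2$ and $r_Es_E\geq dn_E^2$ and then applies AM--GM, which is the same underlying inequality $(x-y)^2\geq 0$ but packaged so that the degenerate case $s_E=0$ (forcing $n_E=0$) must be handled by hand. Both arguments are complete and correct; the paper's version is slightly slicker because the completed square absorbs all the case analysis, while yours makes the positivity of the individual Mukai-vector components explicit, which is mildly more informative but costs you the boundary case.
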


\begin{proof}
We put 
\[
v(A) = r_A \+ n_A L \+ s_A \mbox{, and } v(E) = r_E \+ n_E L \+ s_E .
\]
Since $v(A) ^2< 0$ and the Picard number is one, $r_A$ should be positive. 
So we have
\[
\frac{s_A}{r_A}= \frac{L^2}{2}\Big( \frac{n_A}{r_A} \Big)^2 - \frac{v(A)^2}{2r_A^2}
\mbox{ and }
\frac{s_E}{r_E}= \frac{L^2}{2}\Big( \frac{n_E}{r_E} \Big)^2 - \frac{v(E)^2}{2r_E^2} .
\]
Then
\begin{eqnarray*}
\frac{\chi (A, E)}{r_A r_E}	&=&	\frac{-\< v(A),v(E) \>}{r_A r_E} \\
							&=& \frac{L^2}{2}\Big( \frac{n_A}{r_A} - \frac{n_E}{r_E} \Big)^2 - ( \frac{v(E)^2}{2 r_E^2} + \frac{v(A)^2}{2 r_A^2} )\\
							&>& 0. 
\end{eqnarray*}
Hence $\chi(A,E)>0$. 
\end{proof}

By virtue of Proposition \ref{5.2} 
we can determine the HN filtrations of some special complexes for $\sigma \in V(X)_{>2}$. 
We remark that there is a similar assertion to the following two corollaries in \cite[Proposition 2.15]{HMS} when $X$ is a K3 surface with $\mr{NS}(X)=0$.

\begin{cor}\label{5.6}
Let $(X,L)$ be a generic K3 with $\deg X =2d$, $\sigma = \sigma _{(\beta,\omega)} = (Z,\mca P)$ in $V(X)_{>2}$ and 
$S$ a spherical sheaf on $X$ with $\rank S \leq \sqrt{d}$. 
We put $\beta= bL$ and $v(S) = r \+ nL \+ s$. 

$(1)$ If $b > \frac{n}{r}$, then $T_S(\mca O_x)$ is not $\sigma $-semistable. The HN filtration of $T_S(\mca O_x)$ is given by 
\begin{equation}
\xymatrix{
0\ar[rr]	&	&\mca O_x \ar[ld]\ar[rr]	& 	& T_S(\mca O_x)\ar[ld] \\
	&\mca O_x \ar@{-->}[ul]^{[1]}& 		& S^{\+ r}[1]\ar@{-->}[ul]^{[1]}&  
}.
\label{HN}
\end{equation}

$(2)$ If $b=\frac{n}{r}$, then $T_S(\mca O_x)$ is $\sigma $-semistable. The JH filtration of $T_S(\mca O_x)$ is given by the sequence $(\ref{HN})$. 

$(3)$ If $b < \frac{n}{r}$ and $r  \leq  d^{\frac{1}{4}}$, then $T_S(\mca O_x)$ is $\sigma $-stable. 
\end{cor}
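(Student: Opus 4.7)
The strategy is to compute $T_S(\mca O_x)$ via the defining triangle of the spherical twist and to compare phases using Propositions \ref{2.3} and \ref{5.2}. Since $S$ is a $\mu$-stable locally free sheaf of rank $r$ (Proposition \ref{Mukai}), the complex $\Hom^\bullet(S, \mca O_x)$ is concentrated in degree $0$ with dimension $r$, so formula (\ref{5}) produces the distinguished triangle $S^{\oplus r} \to \mca O_x \to T_S(\mca O_x) \to S^{\oplus r}[1]$, equivalently $\mca O_x \to T_S(\mca O_x) \to S^{\oplus r}[1] \to \mca O_x[1]$. This is exactly the sequence (\ref{HN}). The analysis in each case reduces to comparing $\phi(\mca O_x) = 1$ (Proposition \ref{2.3}) with $\phi(S^{\oplus r}[1]) = \arg Z(S) + 1$, where $S^{\oplus r}[1]$ is $\sigma$-semistable because $S[1]$ is $\sigma$-stable.

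By Remark \ref{2.4}(2) the sign of $b - n/r$ controls the position of $S$ in the torsion pair: $b > n/r$ gives $S \in \mca F_{(\beta,\omega)}$ and $\arg Z(S) \in (-1,0)$; $b = n/r$ gives $\arg Z(S) = 0$; and $b < n/r$ gives $S \in \mca T_{(\beta,\omega)}$ and $\arg Z(S) \in (0,1)$, each inequality being strict because $\mathrm{Im}\, Z(S)$ vanishes only when $b = n/r$. Hence in case $(1)$ the factor $S^{\oplus r}[1]$ has phase strictly below $1$, so (\ref{HN}) exhibits two strictly decreasing semistable phases and is a genuine HN filtration; in particular $T_S(\mca O_x)$ is not $\sigma$-semistable. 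In case $(2)$ both factors lie in $\mca P(1)$, so $T_S(\mca O_x) \in \mca P(1)$ is $\sigma$-semistable, and (\ref{HN}) is a JH filtration whose $\sigma$-stable simple factors are $\mca O_x$ and $r$ copies of $S[1]$.

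In case $(3)$ the phase inequality reverses ($\arg Z(S^{\oplus r}[1]) > 1$), so (\ref{HN}) cannot be an HN filtration. Instead, identify the cone concretely: the evaluation $S^{\oplus r} \to \mca O_x$ is surjective, so $T_S(\mca O_x) = K[1]$ where $K := \ker(S^{\oplus r} \to \mca O_x)$ is a torsion-free sheaf with $v(K) = r \cdot v(S) - v(\mca O_x)$. Thus $v(K)^2 = 0$, $\rank K = r^2 \leq \sqrt{d}$ (using $r \leq d^{1/4}$), and $\mu_\omega(K) = \mu_\omega(S) > \beta\omega$. The plan is to verify that $K$ is Gieseker stable and apply Theorem \ref{4.7}(1): then $K$ is $\sigma$-stable, whence $T_S(\mca O_x) = K[1]$ is also $\sigma$-stable.

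The main technical step is therefore the Gieseker stability of $K$. Since $S^{\oplus r}$ is $\mu$-semistable, every subsheaf of $K$ has slope $\leq \mu(S) = \mu(K)$, so it suffices to rule out proper $K_0 \subset K$ with $\mu(K_0) = \mu(S)$ and $p(K_0) \geq p(K)$. Saturating such a $K_0$ inside $S^{\oplus r}$ preserves the slope (any added torsion must be $0$-dimensional, else $\mu$-semistability of $S^{\oplus r}$ would be violated), and the standard structure theorem for saturated same-slope subsheaves of a $\mu$-polystable bundle produces a direct summand $S^{\oplus j} \subset S^{\oplus r}$ for some $1 \leq j \leq r$. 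Viewing $S^{\oplus r} \cong \Hom(S, \mca O_x) \otimes S$, the evaluation restricts to zero on $V \otimes S$ only when $V = 0$, so $S^{\oplus j} \not\subset K$ for $j \geq 1$; hence $K_0 \subset K \cap S^{\oplus j} = \ker(S^{\oplus j} \to \mca O_x)$. A direct Hilbert-polynomial computation then gives $p(K \cap S^{\oplus j}) = p(S) - 1/(jr) < p(S) - 1/r^2 = p(K)$ whenever $j < r$, while $j = r$ forces $K_0 = K$, contradicting properness and concluding the proof.
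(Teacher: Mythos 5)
Your treatment of parts (1) and (2) coincides with the paper's: both rest on the triangle $\mca O_x \to T_S(\mca O_x)\to S^{\+ r}[1]$ coming from (\ref{5}), on Proposition \ref{2.3} for $\mca O_x$, on Proposition \ref{5.2} for $S$, and on the sign of $b-\frac{n}{r}$ deciding whether $S$ lies in $\mca T_{(\beta,\omega)}$ or $\mca F_{(\beta,\omega)}$ and hence whether $\arg Z(S^{\+ r}[1])$ is below, equal to, or above $\arg Z(\mca O_x)=1$. For part (3) you also follow the paper's reduction: $T_S(\mca O_x)=\tilde S_x[1]$ with $\tilde S_x=\Ker (S^{\+ r}\to\mca O_x)$, $v(\tilde S_x)^2=0$, $\rank \tilde S_x=r^2\leq\sqrt d$ and $\mu_{\omega}(\tilde S_x)>\beta\omega$, so everything hinges on the Gieseker stability of $\tilde S_x$, after which Theorem \ref{4.7} (1) finishes. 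The paper gets that stability essentially for free: since $T_S$ is an equivalence, $\tilde S_x$ is simple and semi-rigid with primitive Mukai vector, and \cite[Proposition 3.14]{Muk} applies. You instead attempt a direct sheaf-theoretic verification.

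That direct verification has a genuine gap at its central step. You invoke ``the standard structure theorem for saturated same-slope subsheaves of a $\mu$-polystable bundle'' to conclude that the saturation of $K_0$ inside $S^{\+ r}$ is a direct summand $V\otimes S$. There is no such standard theorem on a surface: having the same \emph{slope} does not control the second Chern class, so a saturated same-slope subsheaf need not a priori have the same reduced Hilbert polynomial as $S$, and it is only under the latter, stronger condition that semisimplicity (in the finite-length category of Gieseker-semistable sheaves with fixed reduced Hilbert polynomial) forces a direct summand. The difference between ``same slope'' and ``same $p$'' is exactly the constant term of the Hilbert polynomial, i.e.\ precisely the quantity the Gieseker-stability argument is supposed to control, so as written the appeal is circular. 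The claim does turn out to be true here, but its proof is the real content: one must show that every torsion-free $\mu$-semistable quotient $Q$ of $S^{\+ r}$ of slope $\mu_{\omega}(S)$ satisfies $p(Q)\leq p(S)$ (e.g.\ by choosing a $\mu$-Jordan--H\"older filtration with saturated steps and using $v(G)^2\geq -2$ for the simple stable factors $G$, together with $v(S)^2=-2$ and $\gcd(r,n)=1$), which combined with $p(Q)\geq p(S)$ from Gieseker polystability pins $p$ down and only then lets semisimplicity produce $V\otimes S$. The remainder of your computation (no $V\otimes S$ lies in $\tilde S_x$, $p(\tilde S_x\cap S^{\+ j})=p(S)-\frac{1}{jr}<p(S)-\frac{1}{r^2}=p(\tilde S_x)$ for $j<r$, and the $j=r$ case via equality of rank and $c_1$) is sound, so the argument is repairable, but the key lemma is asserted rather than proved and is not citable in the generality you state it.
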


\begin{proof}
We first remark that the sequence of distinguished triangles (\ref{HN}) comes from the formula (\ref{5}). 

(1) Assume that $b > \frac{n}{r}$. Then $S^{\+ r}$ is in $\mca P((-1,0])$ and it is $\sigma$-semistable by Proposition \ref{5.2}. 
Hence $\arg Z(\mca O_x) > \arg Z(S^{\+ r} [1]) > 0$. 
Thus the sequence (\ref{HN}) is the HN filtration of $T_S(\mca O_x)$. 

(2) If $b =\frac{n}{r}$ then $\arg Z(\mca O_x) = \arg Z(S^{\+ r} [1])$. 
By Proposition \ref{5.2}, $S$ is $\sigma$-stable. 
Thus (\ref{HN}) is a JH filtration of $T_S(\mca O_x)$. 

(3) We put $\tilde S_x = \Ker (S ^{\+ r} \to \mca O_x)$. 
Note that $\rank \tilde S_x = r^2$. 
Then $T_S(\mca O_x) = \tilde S_x [1]$. 
So it is enough to show that $\tilde S_x$ is $\sigma$-stable. 
Since $T_S$ is an equivalence we have 
\[
\hom_X^0(\tilde S_x , \tilde S_x) = 1 \mbox{, } \hom_X^1(\tilde S_x,\tilde S_x) = 2 \mbox{ and $v(\tilde S_x)$ is primitive}.  
\]
Thus $\tilde S_x$ is Gieseker stable by \cite[Proposition 3.14]{Muk}. 
Then $\tilde S_x$ is $\sigma$-stable by Theorem \ref{4.7} (1)
\end{proof}

By Corollary \ref{5.6} (1), we can see that 
it is impossible to remove the assumption of local-freeness in Theorem \ref{4.7} (2). 

\begin{cor}\label{5.7}
Let the notations be as in Corollary \ref{5.6}. 

$(1)$ If $b \leq \frac{n}{r}$ and $r \leq d^{\frac{1}{4}}$ then the HN filtration of $T_S^n(\mca O_x)$ $(n >1 )$ is given by
\begin{center}
\resizebox{0.99\hsize}{!}{
$
\xymatrix{
0\ar[rr] && T_S(\mca O_x)\ar[ld]\ar[rr]	&	&	T_S^2(\mca O_x)\ar[ld]\ar[r] & \cdots \ar[r] & T_S^{n-1}(\mca O_x) \ar[rr]	&	  &T_S^n(\mca O_x)\ar[ld] \\ 
&T_S(\mca O_x)\ar@{-->}[lu]^{[1]}&	& S^{\+ r} \ar@{-->}[ul]^{[1]}& 		& 		  &			&S^{\+ r}[2-n]\ar@{-->}[ul]^{[1]}& 
}.
$
}
\end{center}

$(2)$ If $b > \frac{n}{r}$, then the HN filtration of $T_S^n(\mca O_x)$ is 
\begin{center}
\resizebox{0.99\hsize}{!}{
$
\xymatrix{
0\ar[rr] && \mca O_x\ar[ld] \ar[rr]	&	&	T_S(\mca O_x)\ar[ld]\ar[r] & \cdots \ar[r] & T_S^{n-1}(\mca O_x)\ar[rr]	&	  &T_S^n(\mca O_x)\ar[ld]\\ 
&\mca O_x\ar@{-->}[lu]^{[1]}&	& S^{\+ r}[1] \ar@{-->}[ul]^{[1]}& 		& 		  &			&S^{\+ r}[2-n]\ar@{-->}[ul]^{[1]}& 
}.
$
}
\end{center}
\end{cor}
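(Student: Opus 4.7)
The backbone of the proof is the inductive distinguished triangle
\[
T_S^{k-1}(\mca O_x) \longrightarrow T_S^k(\mca O_x) \longrightarrow S^{\+ r}[2-k] \longrightarrow T_S^{k-1}(\mca O_x)[1]
\]
for each $k \geq 1$, from which both filtrations are obtained by splicing. To derive this triangle, I would apply the defining formula (\ref{5}) to $E = T_S^{k-1}(\mca O_x)$. Using that $T_S$ is an autoequivalence together with the standard spherical identity $T_S(S) = S[-1]$, and hence $T_S^{-(k-1)}(S) = S[k-1]$, one computes
\[
\Hom_X^i\bigl(S, T_S^{k-1}(\mca O_x)\bigr) \cong \Hom_X^i\bigl(S[k-1], \mca O_x\bigr) = \Hom_X^{i-(k-1)}(S, \mca O_x).
\]
Since $S$ is locally free of rank $r$ by Proposition \ref{Mukai}, the right hand side equals $\bb C^r$ exactly when $i = k-1$ and vanishes otherwise; hence $R\Hom(S, T_S^{k-1}(\mca O_x)) \otimes S \simeq S^{\+ r}[1-k]$, and the cone defining $T_S$ produces the displayed triangle.

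With these triangles, the splicing is routine. For part (1), start the filtration at $E_1 = T_S(\mca O_x)$, which by Corollary \ref{5.6}(2) or (3) is $\sigma$-semistable (the case $b < n/r$ uses the hypothesis $r \leq d^{1/4}$), and extend via the above triangles for $k = 2, \ldots, n$. For part (2), since by Corollary \ref{5.6}(1) the object $T_S(\mca O_x)$ fails to be $\sigma$-semistable, replace the initial step by its own HN filtration $0 \to \mca O_x \to T_S(\mca O_x)$ with quotient $S^{\+ r}[1]$, and continue with the same extensions. Every factor $S^{\+ r}[j]$ is $\sigma$-semistable by Proposition \ref{5.2} combined with the shift invariance of semistability, while $\mca O_x$ is $\sigma$-stable by Proposition \ref{2.3}.

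To confirm that these are HN filtrations, I must check strict decrease of phases. From formula (\ref{4.0}) and $v(S)^2 = -2$ one finds $\phi(S) \in (0,1)$ if $b < n/r$, $\phi(S) = 0$ if $b = n/r$, and $\phi(S) \in (-1,0)$ if $b > n/r$; meanwhile $\phi(\mca O_x) = 1$, and in the cases relevant to part (1) a direct computation yields $\phi(T_S(\mca O_x)) \in [1,2)$, with equality $\phi(T_S(\mca O_x)) = 1$ only when $b = n/r$. Adding successive integer shifts to $\phi(S)$ then produces the required strict decrease of phases in each of the three sub-cases.

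I expect the main delicate point to be the shift bookkeeping in the identity $\Hom^i(S, T_S^{k-1}(\mca O_x)) \cong \Hom^{i-(k-1)}(S, \mca O_x)$: an off-by-one error here would introduce a wrong shift in the quotient $S^{\+ r}[2-k]$ and would then invalidate the phase comparison. Once this identity is secured, the remaining verifications are immediate consequences of Corollary \ref{5.6} and Proposition \ref{5.2}.
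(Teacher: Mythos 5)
Your proposal is correct and follows essentially the same route as the paper: the paper obtains your inductive triangle by applying $T_S^{k-1}$ to the base triangle $S^{\+ r} \to \mca O_x \to T_S(\mca O_x)$ and using $T_S(S)\simeq S[-1]$, whereas you re-derive it from the cone formula by computing $R\Hom(S,T_S^{k-1}(\mca O_x))$ — the same identity $T_S(S)\simeq S[-1]$ doing the work in both cases. Your shift bookkeeping and phase comparison are accurate, and in fact spell out details (the semistability of the factors via Corollary \ref{5.6} and Proposition \ref{5.2}, and the strict decrease of phases) that the paper leaves implicit.
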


\begin{proof}
By (\ref{5}), we obtain the following distinguished triangle:
\[
\begin{CD}
S^{\+ r} @>>> \mca O_x @>>> T_S(\mca O_x) @>>> S^{\+ r}[1]
\end{CD}.
\] 
Since $T_S(S)\simeq S[-1]$
\footnote{One can prove this fact $T_S(S) \simeq S[-1]$ easily in the following way. We have the natural exact sequence of sheaves by taking cohomologies of the distinguished triangle arising from (\ref{5}). Then the fact follows from the exact sequence of sheaves. See also \cite[Exercise 8.5]{Huy}. }, 
we can easily show that the two sequences of triangles exist. 
By Corollary \ref{5.6}, both sequences are the HN filtrations of $T_S^n(\mca O_x)$. 
\end{proof}

\section{Applications of Theorem \ref{thm2}}

In this section we deal with two applications of Theorem \ref{thm2}. 
We first observe the morphism $\Phi _*$ between the space of stability conditions induced by an equivalence $\Phi$ of triangulated categories. 

Let $X$ and $Y$ be projective K3 surfaces, and $\Phi:D(Y) \to D(X)$ an equivalence. 
Then $\Phi$ induces a natural morphism $\Phi_* : \Stab(Y) \to \Stab(X)$ as follows:
\[
\begin{matrix}
	&	\Phi_* : \Stab(Y) \to  \Stab (X),\  	
		\Phi _*\bigl( ( Z_Y,\mca P_Y) \bigr)= (Z_X,\mca P_X)	& \\
	& \mbox{where }Z_X(E) = Z_Y \bigl( \Phi ^{-1}(E) \bigr) ,\ 
	\mbox{and }\mca P_X(\phi )= \Phi \Bigl( \mca P_Y(\phi ) \Bigr) .
\end{matrix}
\]

Then the following proposition is almost obvious. 

\begin{prop}\label{6.1}
Let $X$ and $Y$ be projective K3 surfaces, and $\Phi:D(Y) \to D(X)$ an equivalence. 
For $\sigma \in U(X)$, $\sigma$ is in $\Phi _* (U(Y) )$ if and only if $\Phi (\mca O_y) $ is $\sigma$-stable with the same phase 
for all closed points $y \in Y$. 
\end{prop}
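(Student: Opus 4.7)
The plan is to prove this almost directly from definitions, relying on the intrinsic characterization of $U(X)$ and $U(Y)$ given in Proposition \ref{2.6}: a stability condition belongs to $U(\cdot)$ if and only if it is good, locally finite, numerical, and all structure sheaves of closed points are stable with a common phase.

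For the ``only if'' direction, I would suppose $\sigma = \Phi_*(\tau)$ for some $\tau = (Z_Y, \mca P_Y) \in U(Y)$. By Proposition \ref{2.6}, every $\mca O_y$ lies in $\mca P_Y(\phi)$ for one common phase $\phi$ and is minimal there. Since $\Phi_*$ is defined by transporting the slicing, $\mca P_X(\phi) = \Phi(\mca P_Y(\phi))$, so $\Phi(\mca O_y) \in \mca P_X(\phi)$. Because $\Phi$ is an equivalence of triangulated categories, it preserves ``minimality'' inside each $\mca P(\phi)$, so $\Phi(\mca O_y)$ is $\sigma$-stable with the common phase $\phi$.

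For the ``if'' direction, I would set $\tau := (\Phi^{-1})_*(\sigma)$ and verify $\tau \in U(Y)$ via Proposition \ref{2.6}. Local finiteness transfers because $\Phi$ identifies $\mca P_Y((\phi-\epsilon,\phi+\epsilon))$ with $\mca P_X((\phi-\epsilon,\phi+\epsilon))$; numericality transfers because $\Phi$ induces an isomorphism $\Phi^H$ on the Mukai lattices through which $Z_X = Z_Y\circ (\Phi^H)^{-1}$ factors. The hypothesis together with the definition of $\tau$ gives that each $\mca O_y$ is $\tau$-stable with a common phase.

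The only step requiring a separate remark is the transfer of goodness. I would use that $\Phi^H: \mca N(Y) \to \mca N(X)$ is an isometry of the Mukai pairing, hence sends $\Delta(Y)$ bijectively onto $\Delta(X)$ and maps $P(Y)$ onto $P(X)$. Consequently it identifies $P_0(Y)$ with $P_0(X)$. Since $\pi(\tau) = (\Phi^H)^{-1}(\pi(\sigma))$ and $\pi(\sigma) \in P_0(X)$, we conclude $\pi(\tau) \in P_0(Y)$, i.e.\ $\tau$ is good. This is the main (and essentially only) non-formal point, and it is handled by the standard fact that derived equivalences of K3 surfaces yield Hodge isometries of the Mukai lattice. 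Combining the four properties gives $\tau \in U(Y)$, so $\sigma = \Phi_*(\tau) \in \Phi_*(U(Y))$.
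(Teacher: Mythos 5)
Your proof is correct and follows essentially the same route as the paper: both rely on the intrinsic characterization of $U(\cdot)$ from Proposition \ref{2.6} and on the fact that the induced isometry $\Phi^H$ of Mukai lattices preserves the property of being good. The paper's version is terser (it simply rewrites $\Phi_*(U(Y))$ as a set and notes goodness transfers), while you spell out the transfer of local finiteness, numericality, and minimality in each slice, but these are elaborations of the same argument rather than a different approach.
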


\begin{proof}
By the definition of $\Phi _* : \Stab (Y) \to \Stab (X)$, $\Phi _*(U(Y))$ is given by:
\begin{eqnarray*}
\Phi _*(U(Y))	&=&	\Phi _* \bigr( \{ \sigma \in \Stab (Y) | \sigma \mbox{ is good, }\mca O_y \mbox{ is $\sigma$-stable }    
			(\forall y \in Y)\} \bigl) \\
				&=& \{ \tau \in \Stab (X) | \tau \mbox{ is good, }\Phi (\mca O_y) \mbox{ is $\tau$-stable }    
			(\forall y \in Y)\} .
\end{eqnarray*}
Recall that the $\Phi $ induces the isometry $\Phi ^H :\mca N(Y) \to \mca N(X)$. 
So if $\sigma \in \Stab (Y)$ is good, then $\Phi_* (\sigma)$ is also good. 
This completes the proof. 
\end{proof}

Let us consider the first application of Theorem \ref{4.7}.

\begin{ex}\label{6.2}
In this example we claim that there is a pair $(E,\tau)$ such that a true complex $E \in D(X)$ is $\tau$-stable for $\tau \in V(X)\backslash V(X)_{>2}$.

We first define a special subset $D^M$ of $V(X)\backslash V(X)_{>2}$ depending on a line bundle $M$ in the following way. 
We put
$V(X)_{>2}^M$ for $M$ by  
\[
V(X)_{>2}^M := \{ \sigma _{(\beta,\omega)}  \in V(X)_{>2} | \beta \omega  < \mu_{\omega}(M)  \} . 
\]
By Proposition \ref{6.1} and Corollary \ref{5.6} (3), we see $V(X)_{>2}^M \subset (T_M)_*(U(X)) \cap V(X)$. 
We also put $U(X)_{>2}^M := V(X)_{>2}^M \cdot \tilde{GL}^+(2,\bb R)$.
By Remark \ref{r1.7}, we see $U(X)_{>2}^M \subset (T_M)_*(U(X)) \cap U(X)$. 
Then we define 
\[
D^M := T_{M*}^{-1}\big( U(X)_{>2}^M \big) \cap V(X).
\] 
Since $T_M = (\otimes M) \circ T_{\mca O_X} \circ (\otimes M^{-1})$ 
we see  that $D^{M}$ is the following half circle:
\begin{center}
\includegraphics[clip,height=45mm]{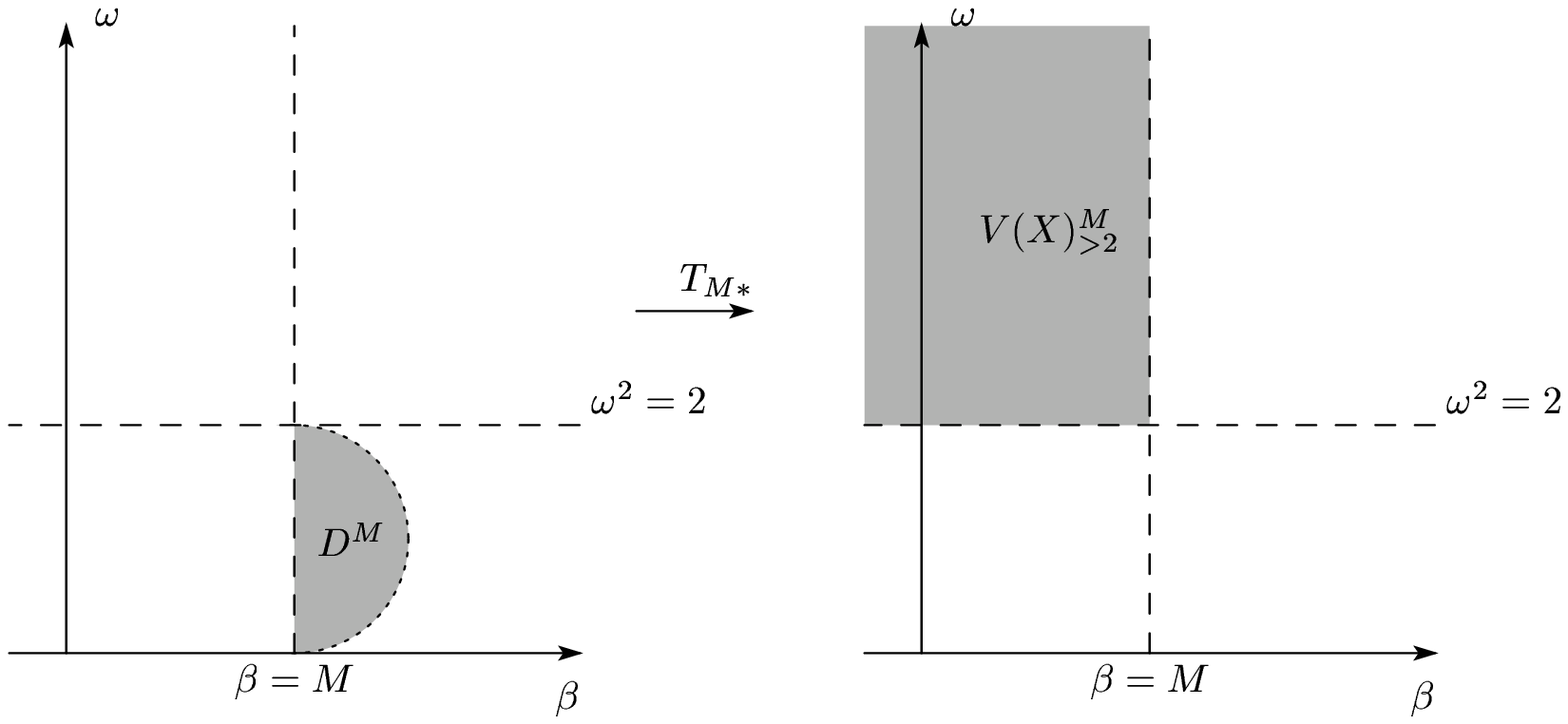}
\end{center}
Thus $D^M \subset V(X)\backslash V(X)_{>2}$. 
 
Next we show that there is a true complex $E \in D(X)$ which is $\tau$-stable 
for $\tau \in D^M$.  
In fact, by Proposition \ref{6.1}, 
$E \in D(X)$ is $\sigma$-stable for any $\sigma \in V(X)_{>2}^M$ 
(for example $E$ is a torsion free sheaf in Theorem \ref{thm2} or $\mca O_x$), 
if and only if $T_M^{-1}(E)$ is $\tau$-stable for any $\tau \in D^M$. 
For instance, $T_M^{-1}(\mca O_x )$ is truly complex which is $\tau $-stable for any $\tau \in D^M$.  
By the definition of $T_M$, we can easily compute the $i$-th cohomology $H^i$ of $T_M^{-1}(\mca O_x)$. 
In fact we have 
\[
H^i = 	\begin{cases}	\mca O_x & (i=0) \\
						M & (i=-1) \\
						0 & (\mbox{otherwise}) .
		\end{cases} 
\]
\end{ex}

The crucial part of Example \ref{6.2} is that 
the spherical twist $T_M$ enables us to exchange the unbounded region $V(X)_{>2}^M$ into the bounded region $D^M$. 
We use this idea in the proof of Theorem \ref{thm1}. 
\vspace{5pt}

Next we shall explain the second application. 
In general spherical twists send sheaves to complexes. 
We first show this easy statement in a special case. 

\begin{lem}\label{6.3}
Let $(X,L)$ be a generic K3, and $E$ a Gieseker stable torsion free sheaf with $v(E)^2 \leq 0$. 
Then there is a line bundle $M$ such that the spherical twist $T_M(E)$ of $E$ is a true complex with 
$r' \neq 0$ where $v(T_M(E)) = r ' \+ \Delta ' \+ s'$. 
\end{lem}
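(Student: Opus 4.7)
The plan is to take $M = L^{\otimes k}$ for a sufficiently large positive integer $k$. Since $\mr{NS}(X) = \bb Z \cdot L$, every line bundle has this form, so I need only verify two things for $k \gg 0$: (i) $r'(k) \neq 0$, and (ii) $T_M(E)$ has nonzero cohomology in at least two degrees. Writing $v(E) = r_E \+ n_E L \+ s_E$ and $v(M) = 1 \+ kL \+ (dk^2+1)$ with $2d = L^2$, the twist formula (\ref{twist}) yields
\[
r'(k) = r_E + \langle v(E), v(M) \rangle = -d r_E k^2 + 2 d n_E k - s_E,
\]
a downward-opening quadratic in $k$ whose discriminant equals $2d \cdot v(E)^2 \leq 0$. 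Hence $r'(k) \leq 0$ for every real $k$, with equality at most at $k = n_E/r_E$ (and only when $v(E)^2 = 0$). In particular $r'(k) \neq 0$ for all sufficiently large integers $k$.

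For (ii) I would analyse the defining triangle $\mr{RHom}(M,E) \otimes M \to E \to T_M(E) \to [1]$ coming from (\ref{5}) via its long exact sequence of cohomology sheaves. Choose $k$ so large that $\mu(M) = 2dk > 2d n_E / r_E = \mu(E)$; then any nonzero morphism from the line bundle $M$ to the torsion-free sheaf $E$ must be injective and exhibit $M$ as a subsheaf of $E$ of slope strictly greater than $\mu(E)$, contradicting the $\mu$-semistability of $E$ (which follows from Gieseker-stability). Hence $\Hom(M,E) = 0$, so the long exact sequence produces an injection $E \hookrightarrow H^0(T_M(E))$ together with an isomorphism $H^1(T_M(E)) \simeq M \otimes \Ext^2(M,E)$. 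To see that $\Ext^2(M,E) \neq 0$ for large $k$, I would apply Riemann-Roch:
\[
\chi(M, E) = -\langle v(M), v(E) \rangle = d r_E k^2 - 2 d n_E k + r_E + s_E \longrightarrow +\infty
\]
as $k \to \infty$; combined with $\Hom(M,E) = 0$ this forces $\ext^2(M,E) > 0$ for every sufficiently large $k$. Both $H^0$ and $H^1$ of $T_M(E)$ are then nonzero, so $T_M(E)$ is a true complex.

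I anticipate no substantive obstacle. Each of the three conditions (nonvanishing of $r'(k)$, vanishing of $\Hom(M,E)$, positivity of $\chi(M,E)$) fails for only finitely many integers $k$, so any sufficiently large positive $k$ simultaneously satisfies all of them. The main conceptual step is the joint use of the cohomology long exact sequence of $T_M(E)$ together with the slope-comparison argument that forces $\Hom(M,E) = 0$.
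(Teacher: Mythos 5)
Your argument is correct and follows essentially the same route as the paper's proof: take $M$ of slope larger than $\mu(E)$, deduce $\Hom_X^0(M,E)=0$ from stability so that $E$ injects into $H^0(T_M(E))$, and obtain $H^1(T_M(E))\simeq \Hom_X^2(M,E)\otimes M\neq 0$ from the positivity of $\chi(M,E)$ (the paper invokes Lemma \ref{5.4} for this, valid for every line bundle, while you let $k\to\infty$). Your explicit observation that $r'(k)$ is a downward-opening quadratic with discriminant $2d\cdot v(E)^2\leq 0$, hence vanishes for at most one $k$, is just a sharper form of the paper's open/closed argument for arranging $r'\neq 0$.
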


\begin{proof}
Let $v(E) = r_E \+ n_E L \+ s_E$ and let $M = m L$ be a line bundle with 
\begin{equation}
\frac{n_E }{r_E  }< m  . \label{open}
\end{equation}
Here we compute $v(T_M(E))$: 
\begin{eqnarray}
v(T_M(E))	&=& v(E) + \< v(E), v(M) \> v(M) \notag \\
			&=& r' \+ n' L \+ s' . \notag \label{closed}
\end{eqnarray}
The condition $r'=0$ is a closed condition and the condition (\ref{open}) is open. 
Hence we can choose $M$ so that $r' \neq 0$ and $M$ satisfies the condition (\ref{open})

Let $H^i$ be the $i$-th cohomology of $T_M(E)$. 
By the definition of spherical twists, we obtain the following exact sequence of sheaves:
\[
\begin{CD}
0 @>>> \Hom ^0_X(M,E)\otimes M	@>>> E @>>> H^0  \\
@>>>\Hom ^1_X(M,E)\otimes M	@>>> 0 @>>> H^1  \\
@>>>\Hom ^2_X(M,E)\otimes M	@>>> 0 @>>> H^2 @>>> 0 
\end{CD}
\]
%Notice that $H^{-1}$ is the subsheaf of $\Hom ^0_X(M,E)\otimes M$. 
Since both $M$ and $E$ are Gieseker stable, $\Hom^0_X(M,E)=0$ by (\ref{open}). 
Hence $H^0$ is not $0$. 
By Lemma \ref{5.4}, we have $\Hom^2_X(M,E) \neq 0$. So $H^1 \neq 0$. 
Thus $T_M(E)$ is a complex. 
\end{proof}

The following lemma is due to \cite{Bri2} and \cite{Tod}. 

\begin{lem}\label{6.4}(\cite[Proposition 14.2]{Bri2}, \cite[Proposition 6.4]{Tod})
Let $X$ be a projective K3 surface, $\sigma_{(\beta,\omega)}=(Z,\mca P) \in V(X)$ and $E$ in $\mca P((0,1])$.  
We put $v(E)=r \+ \Delta  \+ s$. 

$(1)$ Assume that $r>0$. 
If $E$ is $\sigma_{(\beta,n \omega)}$-semistable for any sufficiently large $n \gg 0$, 
then $E$ is a torsion free sheaf. 

$(2)$ Assume that $r=0$. 
If $E$ is $\sigma_{(\beta,n \omega)}$-semistable for any sufficiently large $n \gg 0$, 
then $E$ is a torsion sheaf. 
\end{lem}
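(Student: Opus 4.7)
The plan is to use the large-volume asymptotics of $Z_{(\beta,n\omega)}$ to convert $\sigma_{(\beta,n\omega)}$-semistability into vanishing statements for the cohomology sheaves of $E$. First I would observe that the torsion pair $(\mca T_{(\beta,\omega)},\mca F_{(\beta,\omega)})$ of Lemma \ref{2.1} is invariant under the scaling $\omega\mapsto n\omega$ with $n>0$, since the defining slope inequalities rescale uniformly in $n$; hence the heart satisfies $\mca A_{(\beta,n\omega)}=\mca A_{(\beta,\omega)}$ for all $n>0$, and $E\in\mca P((0,1])$ yields the short exact sequence
\[
0 \to H^{-1}(E)[1] \to E \to H^0(E) \to 0
\]
in $\mca A_{(\beta,\omega)}$ with $H^{-1}(E)\in\mca F_{(\beta,\omega)}$ torsion-free and $H^0(E)\in\mca T_{(\beta,\omega)}$.

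Next I would expand, for any $F \in D(X)$ with $v(F)=r_F\+\Delta_F\+ s_F$,
\begin{align*}
\mf{Re}\, Z_{(\beta,n\omega)}(F) &= \beta\Delta_F - s_F + \tfrac{r_F}{2}(n^2\omega^2-\beta^2),\\
\mf{Im}\, Z_{(\beta,n\omega)}(F) &= n\omega(\Delta_F - r_F\beta),
\end{align*}
and read off three asymptotic regimes as $n\to\infty$. If $r_F>0$, the real part is dominated by $+r_F n^2\omega^2/2$ while the imaginary part grows only linearly, so $\arg Z(F)\to 0^+$. A shifted object $G[1]$ with $G\in\mca F_{(\beta,\omega)}$ torsion-free of positive rank has $\mf{Re}\sim -r_G n^2\omega^2/2<0$ and $\mf{Im}\geq 0$, so $\arg Z(G[1])\to 1^-$. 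If $r_F=0$, then $Z_{(\beta,n\omega)}(F)=(\beta\Delta_F-s_F)+in\omega\Delta_F$ has bounded real part; membership in $\mca P((0,1])$ forces $\omega\Delta_F\geq 0$, and the phase tends to $1/2$ when $\omega\Delta_F>0$, and equals $1$ identically when $\omega\Delta_F=0$.

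For (1), the assumption $r>0$ gives $\arg Z(E)\to 0^+$. If $H^{-1}(E)\neq 0$, then $H^{-1}(E)[1]\hookrightarrow E$ in $\mca A_{(\beta,\omega)}$ has phase tending to $1^-$, contradicting $\sigma_{(\beta,n\omega)}$-semistability for $n\gg 0$; so $E$ is a sheaf. A nonzero torsion subsheaf $T\subset E$ lies in $\mca T_{(\beta,\omega)}$ with $r_T=0$, hence phase bounded below by $1/2$, producing the same type of contradiction; so $E$ is torsion-free. For (2), the assumption $r=0$ forces $\rank H^{-1}(E)=\rank H^0(E)$. If this common rank is zero, torsion-freeness of $H^{-1}(E)\in\mca F_{(\beta,\omega)}$ yields $H^{-1}(E)=0$ and $E$ is a torsion sheaf. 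Otherwise the common rank is positive, and the quotient $E\twoheadrightarrow H^0(E)$ has phase tending to $0^+$, while the phase of $E$ stays bounded below by a positive constant (approximately $1/2$ in the generic subcase, or $=1$ in the degenerate one), contradicting semistability in its quotient form.

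The main obstacle I anticipate is the degenerate subcase of (2) where $\omega\Delta=0$: there $Z_{(\beta,n\omega)}(E)$ sits on the negative real axis for every $n$, and the subobject $H^{-1}(E)[1]$ has phase also approaching $1$, so the subobject form of semistability is uninformative; switching to the quotient $H^0(E)$, whose phase genuinely decays to $0^+$, is what saves the argument.
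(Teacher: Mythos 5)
The paper gives no proof of this lemma at all --- it simply cites \cite[Proposition 14.2]{Bri2} and \cite[Proposition 6.4]{Tod} --- and your argument is a correct, self-contained rendition of exactly the large-volume-limit computation underlying those references: the heart $\mca A_{(\beta,\omega)}$ is invariant under $\omega \mapsto n\omega$, the quadratic growth of $\mathfrak{Re}\, Z_{(\beta,n\omega)}$ on positive-rank classes drives the phases of $E$ and of $H^0(E)$ to $0^+$ while the phases of $H^{-1}(E)[1]$ and of torsion subsheaves stay bounded away from $0$, and passing to the quotient $E \twoheadrightarrow H^0(E)$ correctly handles the degenerate case $\omega\Delta = 0$ in (2). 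The only cosmetic quibble is that the phase of a $1$-dimensional torsion subsheaf tends to $1/2$ from below rather than being ``bounded below by $1/2$''; what you actually use, and what is true, is that it is bounded away from $0$ uniformly in $n$.
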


The first assertion of Lemma \ref{6.4} are proved by \cite{Bri2} and the second one proved by \cite{Tod}. We can prove the second assertion in a similar way to \cite{Bri2}.  

In the next proposition, we show that it is impossible to extend Theorem \ref{thm2} to $V(X)$ by using Lemma \ref{6.4} and the idea of Example \ref{6.2}.

\begin{prop}\label{6.5}
Let $(X,L)$ be a generic K3 and $E$ a Gieseker stable torsion free sheaf with $v(E)^2 \leq 0$. 
Then there is a $\sigma $ in $V(X)$ such that $E$ is not $\sigma$-semistable. 
\end{prop}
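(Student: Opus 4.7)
The plan is to argue by contradiction, transporting the hypothetical universal semistability of $E$ through a suitable spherical twist and then invoking the large-volume characterization given by Lemma~\ref{6.4}. Suppose $E$ is $\sigma$-semistable for every $\sigma\in V(X)$; by Remark~\ref{r1.7} and the definition $U(X)=V(X)\cdot\tilde{GL}^+(2,\bb R)$, this extends to $E$ being $\tau$-semistable for every $\tau\in U(X)$.

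Using Lemma~\ref{6.3}, I would first choose a line bundle $M=mL$ with $m>n_E/r_E$ so that $T_M(E)$ is a true complex; the long exact sequence coming from the defining triangle of $T_M(E)$ shows that its cohomology is concentrated in degrees $0$ and $1$, with $H^0(T_M(E))\neq 0$ fitting into $0\to E\to H^0(T_M(E))\to \Hom_X^1(M,E)\otimes M\to 0$ and $H^1(T_M(E))\cong \Hom_X^2(M,E)\otimes M\neq 0$. Next, I choose a ray $\sigma_n:=\sigma_{(\beta_0,n\omega_0)}$ with $\beta_0=x_0 L$, $\omega_0=y_0 L$, $x_0<m$, and $y_0>1/\sqrt d$, so that $\sigma_n\in V(X)_{>2}^M$ for every $n\geq 1$. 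Example~\ref{6.2} then gives $T_{M*}^{-1}(\sigma_n)\in U(X)$, and the hypothesis forces $T_M(E)$ to be $\sigma_n$-semistable for every $n\geq 1$.

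For $x_0$ chosen close enough to $m$, I would verify that $T_M(E)\in\mca P_{\sigma_n}((-1,0])$: the sheaf $H^1(T_M(E))$ is a direct sum of copies of $M$ of slope $m>x_0$, so it lies in $\mca T_{(\beta_0,n\omega_0)}$, while $H^0(T_M(E))$ lies in $\mca F_{(\beta_0,n\omega_0)}$ once its maximal $\mu_{\omega_0}$-slope is bounded strictly below $m$. Consequently $T_M(E)[1]\in\mca P_{\sigma_n}((0,1])$ and is $\sigma_n$-semistable for every $n$. Using $v(T_M(E))=v(E)+\langle v(E),v(M)\rangle v(M)$ together with $v(E)^2\in\{0,-2\}$ and $\mr{NS}(X)=\bb Z\cdot L$, a direct calculation yields $\rank T_M(E)\leq 0$, so $\rank T_M(E)[1]\geq 0$. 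Lemma~\ref{6.4}, applied to $T_M(E)[1]$ as $n\to\infty$, then forces $T_M(E)[1]$ to be a sheaf (torsion free or torsion according to the sign of the rank), contradicting $H^{-1}(T_M(E)[1])=H^0(T_M(E))\neq 0$.

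The main obstacle is the heart-verification step: controlling $\mu_{\omega_0}^+(H^0(T_M(E)))$ to ensure $H^0(T_M(E))\in\mca F_{(\beta_0,n\omega_0)}$. This requires the extension $0\to E\to H^0(T_M(E))\to \Hom_X^1(M,E)\otimes M\to 0$ to be non-split in a sufficiently strong sense, which should follow from the fact that this extension arises from the evaluation map defining the spherical twist; then a suitable $x_0$ between this bound and $m$ completes the argument.
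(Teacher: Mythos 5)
Your overall strategy coincides with the paper's: assume semistability on all of $V(X)$ and hence on $U(X)$, twist by a line bundle $M$ chosen via Lemma \ref{6.3}, use Example \ref{6.2} to see that $T_M(E)$ is semistable at every point of $V(X)_{>2}^M$, and contradict Lemma \ref{6.4} along a ray $\omega\mapsto n\omega$. Your side computation that $\rank T_M(E)\leq 0$ is correct (the quadratic $m\mapsto \langle v(E),v(mL)\rangle$ attains its maximum at $m=n_E/r_E$, where $v(E)^2\leq 0$ forces the value to be at most $-r_E$), although the paper does not need it: it simply replaces $T_M(E)$ by a shift so that the rank becomes positive.

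The gap is the step you yourself flag as the main obstacle. To feed $T_M(E)[1]$ into Lemma \ref{6.4} you try to prove $T_M(E)\in\mca P_{\sigma_n}((-1,0])$ by checking the torsion-pair conditions on its cohomology sheaves, which requires $\mu_{\omega_0}^+\bigl(H^0(T_M(E))\bigr)$ to be \emph{strictly} less than $\mu_{\omega_0}(M)$. This is not established, and it is genuinely delicate: a subsheaf $F\subset H^0(T_M(E))$ with $F\cap E=0$ embeds into $\Hom^1_X(M,E)\otimes M$, and while a \emph{saturated} such subsheaf of slope $\mu_{\omega_0}(M)$ is a direct summand $M\otimes V$ and does not lift (the extension class is the tautological element of $\Hom\bigl(\Hom^1_X(M,E),\Ext^1_{\mca O_X}(M,E)\bigr)$, whose restriction to $V\neq 0$ is nonzero), a non-saturated subsheaf $F\subsetneq M\otimes V$ with zero-dimensional cokernel $Q$ has the same slope, and its lifting obstruction may die because it factors through the image of $\Ext^1_{\mca O_X}(Q,E)$; so ``non-split'' alone does not give the bound. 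The paper sidesteps this entirely: since $T_M(E)[1]$ is $\tau_{(\beta,\omega)}$-semistable it already lies in a single $\mca P(\psi)$, and choosing $\beta=xL$ with $x<n'/r'$ (where $v(T_M(E)[1])=r'\+ n'L\+ s'$, $r'>0$) makes the imaginary part $2dy(n'-r'x)$ of $Z_{(\beta,\omega)}(T_M(E)[1])$ positive, which pins $\psi$ down to $(0,1)$ modulo $2\bb Z$; an even shift then places the object in $\mca P((0,1])$ and Lemma \ref{6.4} applies. Replacing your heart-verification by this phase argument repairs the proof; the rest of your reasoning goes through.
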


\begin{proof}
Assume that $E$ is $\sigma $-semistable for all $\sigma \in V(X)$. 
By Lemma \ref{6.3}, there is a line bundle $M $ such that $T_M(E)$ is a complex with 
$r'\neq 0$ where $v(T_M(E)) = r' \+ \Delta ' \+ s' $. 
By a shift of $T_M(E)$ we can assume that $r' >0$ if necessary.  
By the assumption $T_M(E)$ is $\sigma $-semistable for all $\sigma $ not only in $(T_M)_* V(X)$ but also in $(T_M)_* U(X)$. 

Recall that, $(T_M)_*(U(X)) \cap V(X)$ contains the set $V(X)_{>2}^M$ defined in Example \ref{6.2}. 
Hence, 
there is a $\tau_{(\beta,\omega)}=(Z,\mca P) \in V(X) _{>2}^M$ such that 
\[
\beta \omega < \frac{\Delta '}{r'}\omega .
\]
This implies that $T_{M}(E)[2n] $ is in $\mca P((0,1])$ for some $n \in \bb Z$. 
By Lemma \ref{6.4} (1), $T_M(E)[2n]$ should be a sheaf. This contradicts the fact that $T_M(E)$ is a true complex. 
\end{proof}

\begin{thm}\label{6.6}
Let $(X,L) $ be a generic K3 and $E \in D(X)$. 
We assume that $\Hom_X^0(E,E) = \bb C$, $v(E)$ is primitive and $v(E)^2=0$. 
If $E$ is $\sigma $-semistable for all $\sigma \in V(X)$, then $E$ is $\mca O_x$ for some $x \in X$ up to shifts. 
\end{thm}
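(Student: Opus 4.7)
The plan is to pin down $v(E) = r_E \+ n_E L \+ s_E$ by combining the freedom of shifting $E$ (which preserves all the hypotheses: $\Hom^0(E,E)=\bb C$, primitivity of $v(E)$, $v(E)^2=0$, and $V(X)$-semistability), the large-volume statement of Lemma \ref{6.4}, a Jordan-H\"older analysis inside $\mca P(1)$, and Proposition \ref{6.5}. After an initial shift I assume $r_E \geq 0$, and the identity $v(E)^2 = 2dn_E^2 - 2 r_E s_E = 0$ splits the argument into $r_E=0$ and $r_E>0$.

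In the case $r_E=0$, $v(E)^2=0$ forces $n_E=0$, so $v(E)=(0,0,s_E)$ and primitivity gives $s_E=\pm 1$; a further shift reduces to $s_E=1$. Then $Z_\sigma(E)=-1$ for every $\sigma\in V(X)$, and an even shift places $E \in \mca P_\sigma(1)$. I would then invoke the classification in Remark \ref{2.4}(1) of $\sigma$-stable objects of phase $1$ (skyscrapers $\mca O_x$, or shifts $\mca E[1]$ of locally free sheaves) and match the Mukai vectors of the Jordan-H\"older factors of $E$: the rank constraint $\sum r_i = 0$ rules out any $\mca E[1]$ factor, and the $s$-constraint leaves exactly one skyscraper. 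Hence $E \simeq \mca O_x$, as required.

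In the case $r_E>0$, the plan is to produce a contradiction. Choose $\beta_0 = b_0 L$ with $b_0$ very negative (in particular $b_0 < n_E/r_E$) and $\omega_0 = y_0 L$ with $y_0 > 1/\sqrt{d}$, so $\sigma_{(\beta_0,\omega_0)} \in V(X)$. A direct calculation from formula \eqref{4.0} gives
\begin{equation*}
\mathrm{Im}\, Z_{(\beta_0, n\omega_0)}(E) = 2 d (n y_0) (n_E - r_E b_0) > 0 \quad \text{for every } n \geq 1,
\end{equation*}
so $E \in \mca A_{(\beta_0, n\omega_0)}$ for all $n \geq 1$. Combined with the standing semistability hypothesis, Lemma \ref{6.4}(1) then tells me $E$ is a torsion free sheaf. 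Next I would upgrade this to Gieseker stability and invoke Proposition \ref{6.5} for the contradiction: with $b_0 \ll 0$, any torsion free subsheaf $F \subset E$ lies in $\mca T_{(\beta_0, n\omega_0)}$ and becomes a subobject of $E$ in $\mca A_{(\beta_0, n\omega_0)}$, so a Gieseker-destabilizer would $\sigma_{(\beta_0, n\omega_0)}$-destabilize $E$ for $n \gg 0$; hence $E$ is Gieseker semistable, and primitivity of $v(E)$ promotes this to Gieseker stability, since a Gieseker Jordan-H\"older factor has Mukai vector an integer multiple of $v(E)$.

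The hard part, as I see it, is the large-volume comparison in the Gieseker step: I need to verify carefully that a $\mu$-destabilizing subsheaf of $E$ truly sits as a subobject of $E$ inside the tilted heart $\mca A_{(\beta_0, n\omega_0)}$ (which is what the very negative $b_0$ buys me) and that the asymptotic phase ordering as $n\to\infty$ coincides with the Gieseker reduced-Hilbert-polynomial ordering (which follows from the leading $n^2 y_0^2 \cdot r$ term of $\mathrm{Re}\, Z_{(\beta_0,n\omega_0)}$). These are standard large-volume-limit facts, but they are where careful bookkeeping is required.
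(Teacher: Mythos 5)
Your argument is correct and, for the main case $r_E>0$, follows the same skeleton as the paper: reduce to a torsion free sheaf via Lemma \ref{6.4}\,(1), upgrade to Gieseker stability, and contradict Proposition \ref{6.5}. The two places where you genuinely diverge are as follows. First, for $r_E=0$ the paper uses Lemma \ref{6.4}\,(2) (so $E$ is a torsion sheaf, necessarily $0$-dimensional since $n_E=0$) and then the simplicity hypothesis $\Hom^0_X(E,E)=\bb C$ to force $E\simeq\mca O_x$; you instead run a Jordan--H\"older analysis in $\mca P(1)$ using the classification of phase-one stable objects from Remark \ref{2.4}\,(1) together with primitivity of $v(E)$. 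Both work; yours trades the simplicity hypothesis for primitivity and only needs a single $\sigma\in V(X)$, while the paper's is shorter because Lemma \ref{6.4}\,(2) is already available. Second, to get Gieseker stability the paper simply cites Mukai's result (\cite[Proposition 3.14]{Muk}: a simple, semi-rigid sheaf with primitive Mukai vector is Gieseker stable), which completely bypasses your ``hard part.'' Your large-volume replacement is viable, but as written it contains an imprecision you should repair: it is \emph{not} true that every torsion free subsheaf $F\subset E$ lies in $\mca T_{(\beta_0,n\omega_0)}$, no matter how negative $b_0$ is (subsheaves of $E$ can have arbitrarily negative minimal slope, e.g.\ $\mca O(-mL)\hookrightarrow\mca O_X$). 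What you actually need, and what is true, is that the \emph{maximal destabilizing subsheaf} $F$ is $\mu$-semistable of slope $\geq\mu_\omega(E)>\beta_0\omega_0$, hence lies in $\mca T_{(\beta_0,n\omega_0)}$, and that the quotient $E/F$ lies in $\mca T_{(\beta_0,n\omega_0)}$ once $b_0$ is chosen below $\mu^-_{\omega_0}(E)/\omega_0^2$ (minimal slopes of quotients of $E$ are bounded below by $\mu^-_{\omega_0}(E)$); then $0\to F\to E\to E/F\to 0$ is exact in the heart and the asymptotic phase comparison reproduces the reduced Hilbert polynomial comparison (here Picard number one is what makes the $\beta$-twist harmless). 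With that correction, and noting that the multiple of $v(E)$ carried by a Gieseker JH factor is a priori only rational and becomes $1$ by primitivity, your route is complete and self-contained where the paper leans on Mukai.
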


\begin{proof}
We put $v(E)=r_E \+ n_E L \+ s_E$. 

Assume that $r_E \neq 0$. If $E$ is $\sigma $-semistable, then $E[1]$ is also $\sigma$-semistable. 
Thus we can assume that $r_E >0$. Let $\phi$ be the phase of $E$. 
Then we can assume $\phi \in (-1,1]$ by even shifts.  
There is an $\bb R$ divisor $\beta = bL$ such that $b < n_E /r_E$. 
Let us consider $\sigma _{(\beta,\omega)}=(Z,\mca P)$ for all ample divisors $\omega $ with $\omega ^2>2$.  
Notice that $E$ is in $\mca P((0,1])$. 
By Lemma \ref{6.4}, $E$ should be a torsion free sheaf. 
In addition, $E$ is a Gieseker stable sheaf by \cite[Proposition 3.14]{Muk}. 
This contradicts Proposition \ref{6.5}. 

Assume that $r_E =0$. Since $v(E)^2=0$, we have $n_E = 0$. 
Since there is an $\bb R$ divisor $\beta = b L$ such that $b < 0$, 
$E$ is a torsion sheaf by Lemma \ref{6.4} (2).  
Since $n_E =0$, $\dim \mr{Supp}(E)=0$. By the assumption $\Hom ^0_X(E,E)=\bb C$, 
$E$ is $\mca O_x$ for some $x \in X$.  
\end{proof}

Now we are ready to prove an easy consequence of Theorem \ref{6.6}.

\begin{cor}\label{6.7}(= Theorem \ref{thm1})
Let $(X,L_X)$ and $(Y,L_Y)$ be generic K3 and let $\Phi : D(Y) \to D(X)$ be an equivalence. 
If $\Phi _* (U(Y) )= U(X)$, then $\Phi $ can be written in the following way:
\[
\Phi (?)  = M \otimes  f_* (?)[n] ,
\] 
where $M$ is a line bundle on $X$, $f$ is an isomorphism $f:Y \to X$ and $n \in \bb Z$. 
\end{cor}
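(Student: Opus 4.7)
The plan is to apply Theorem \ref{6.6} to each $\Phi(\mca O_y)$ to show that $\Phi$ sends skyscrapers on $Y$ to (uniformly shifted) skyscrapers on $X$, and then invoke a rigidity argument to identify $\Phi$.

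First, the hypothesis $\Phi_*(U(Y))=U(X)$ together with Proposition \ref{6.1} ensures that for each fixed $\sigma\in U(X)$ all the objects $\Phi(\mca O_y)$ ($y\in Y$) are $\sigma$-stable with a common phase $\phi(\sigma)$. Restricting $\sigma$ to $V(X)\subset U(X)$, this gives in particular that every $\Phi(\mca O_y)$ is $\sigma$-semistable for every $\sigma\in V(X)$. Because $\Phi$ is an equivalence the basic invariants transfer: $\Hom^0_X(\Phi(\mca O_y),\Phi(\mca O_y))=\bb C$, and the induced cohomological isometry $\Phi^H:\mca N(Y)\to \mca N(X)$ shows that $v(\Phi(\mca O_y))=\Phi^H(v(\mca O_y))$ is primitive with square zero. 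Thus $\Phi(\mca O_y)$ meets all the hypotheses of Theorem \ref{6.6}, and we obtain, for every closed point $y\in Y$,
\[
\Phi(\mca O_y)\simeq \mca O_{f(y)}[n(y)]
\]
for some $f(y)\in X$ and some $n(y)\in\bb Z$.

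Second, I use the common-phase condition to pin down the shift. For any fixed $\sigma\in V(X)$ the phase of $\mca O_{f(y)}[n(y)]$ equals $1+n(y)$ by Proposition \ref{2.3}, and this has to agree with the common phase $\phi(\sigma)$; hence $n(y)\equiv n$ is independent of $y$. Setting $\Psi := \Phi[-n]$ produces an equivalence $\Psi:D(Y)\to D(X)$ with $\Psi(\mca O_y)\simeq \mca O_{f(y)}$ for every closed point $y\in Y$.

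Finally, I invoke the classical rigidity statement: any equivalence $\Psi:D(Y)\to D(X)$ that sends closed-point skyscrapers to closed-point skyscrapers is of the form $\Psi(?)\simeq M\otimes f_*(?)$ for some isomorphism $f:Y\to X$ and some line bundle $M$ on $X$. The argument writes $\Psi$ as a Fourier-Mukai transform with kernel $\mca E\in D(Y\times X)$, uses $\mca E|_{\{y\}\times X}\simeq \mca O_{f(y)}$ together with a standard flat-family/base-change analysis to identify $\mca E$ with the structure sheaf of the graph $\Gamma_f\subset Y\times X$ twisted by the pullback of a line bundle, and concludes that the set-theoretic map $y\mapsto f(y)$ is an isomorphism of schemes since $\Psi$ is invertible and the skyscrapers separate points. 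Combining with $\Phi=\Psi[n]$ gives the desired form. The main technical obstacle is precisely this last rigidity step; the substantive new input from this paper is Theorem \ref{6.6}, which is what triggers Step 2.
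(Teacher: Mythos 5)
Your proposal is correct and follows essentially the same route as the paper: apply Theorem \ref{6.6} to each $\Phi(\mca O_y)$ (after noting via Proposition \ref{6.1} that these are $\sigma$-stable of common phase for all $\sigma\in U(X)\supset V(X)$, and that $\Hom^0$, primitivity and $v^2=0$ are preserved by the equivalence), use the common phase to make the shift uniform, and finish with the standard rigidity statement for equivalences sending skyscrapers to skyscrapers, which is exactly the paper's citation of \cite[Corollary 5.23]{Huy}.
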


\begin{proof}
Let $E_y$ be $\Phi (\mca O_y)$ for an arbitrary closed point $y \in Y$. 
Since $\Phi _* (U(Y))= U(X)$, $E_y$ is $\mca O_x [n_y]$ ($n_y \in \bb Z$) for some $x \in X$ by Theorem \ref{6.6}.  
In addition the phase of $E_y$ is constant. 
So $[n_y]$ is also constant. 
Thus $E_y$ is given by $\mca O_{f(y)}[n]$. By \cite[Corollary 5.23]{Huy}, we complete the proof. 
\end{proof}

Here we define the subgroup $\Aut (D(X), U(X))$ of $\Aut (D(X))$:
\[
\Aut (D(X), U(X)):=\{ \Phi \in \Aut (\mca D) | \Phi _*(U(X))= U(X) \}. 
\]

Thus we obtain the following statement:

\begin{cor}\label{group}
Notations being as above, 
we have 
\[
\Aut (D(X), U(X)) = \mr{Tri}(X),
\]
where $\mr{Tri}(X)$ is the subgroup generated by shifts, tensor products of line bundles and automorphisms. 
\end{cor}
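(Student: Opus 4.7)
The plan is to prove the two inclusions $\mr{Tri}(X) \subseteq \Aut(D(X), U(X))$ and $\Aut(D(X), U(X)) \subseteq \mr{Tri}(X)$ separately. The second is essentially an immediate consequence of Theorem \ref{thm1}, while the first reduces to a routine check on generators via Proposition \ref{6.1}.

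For $\mr{Tri}(X) \subseteq \Aut(D(X), U(X))$, closure under composition reduces the task to verifying that each type of generator preserves $U(X)$. By Proposition \ref{6.1}, for any $\sigma \in U(X)$ it suffices to check that $\Phi(\mca O_x)$ is $\Phi_*(\sigma)$-stable with a common phase for all $x \in X$. For $\Phi = f_*$ with $f \in \Aut(X)$, we have $f_*(\mca O_x) = \mca O_{f(x)}$, so the family of skyscrapers is merely permuted. For $\Phi = M \otimes (?)$ with $M$ a line bundle, $M \otimes \mca O_x \simeq \mca O_x$, so the family is fixed pointwise. For a shift $\Phi = [n]$, each $\mca O_x$ is sent to $\mca O_x[n]$, which is again $\Phi_*(\sigma)$-stable with a common phase. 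In each case goodness of $\Phi_*(\sigma)$ follows because $\Phi$ induces an isometry of $\mca N(X)$, exactly as used in the proof of Proposition \ref{6.1}.

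For the reverse inclusion $\Aut(D(X), U(X)) \subseteq \mr{Tri}(X)$, let $\Phi \in \Aut(D(X))$ satisfy $\Phi_*(U(X)) = U(X)$. Applying Theorem \ref{thm1} (= Corollary \ref{6.7}) with $Y = X$, there exist a line bundle $M$ on $X$, an automorphism $f : X \to X$, and an integer $n \in \bb Z$ such that $\Phi(?) = M \otimes f_*(?)[n]$. By definition this representative lies in $\mr{Tri}(X)$, completing the inclusion.

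The main obstacle is therefore absorbed entirely into Theorem \ref{thm1}; once that result is in hand, Corollary \ref{group} is essentially a tautology. The only minor bookkeeping I would double-check is that the shift $[n]$ preserves the goodness condition $\pi(\sigma) \in P_0(X)$, but this is automatic since the induced action on $\mca N(X) \otimes \bb C$ is multiplication by $(-1)^n$, which preserves both $P(X)$ and the complement of every $\delta^{\perp}$ with $\delta \in \Delta(X)$.
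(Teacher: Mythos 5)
Your proof is correct and takes essentially the same route as the paper: the inclusion $\mr{Tri}(X) \subseteq \Aut(D(X),U(X))$ follows because these equivalences send the skyscraper sheaves to skyscraper sheaves up to a common shift (hence preserve $U(X)$, as in Proposition \ref{6.1}), and the reverse inclusion is an immediate application of Corollary \ref{6.7} with $Y = X$.
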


We remark that $\mr{Tri}(X)$ is actually written by $(\Aut (X) \ltimes \mr{Pic}(X) ) \times \bb Z[1]$. 

\begin{proof}
If $\Phi$ is in the right hand side, $\Phi (\mca O_x) = \mca O_y [n]$ for some $y \in X$ and $n \in \bb Z$. Thus $\Phi _*(U(X))=U(X)$. 
Conversely, if $\Phi$ is in the left hand side, $\Phi $ is in the right hand side by Corollary \ref{6.7}. 
\end{proof}

\begin{rmk}\label{6.8}
Throughout this remark, we assume that $A$ and $A'$ are abelian surfaces. 
Similarly to the case of K3 surfaces, we can construct $U(A)$. 
Hence $\Stab (A)$ is nonempty. 
In particular $\Stabd (A) = U(A)$ since $D(A)$ has no spherical objects (cf. \cite[Section 15]{Bri2}). 
In addition, the set of good stability conditions is equal to $U(A)$ (and thus is connected) by the result of \cite[Theorem 3.15]{HMS}. 
The property ``good'' preserved by any equivalence $\Phi : D(A') \to D(A)$. 
Hence for any equivalence $\Phi :D(A') \to D(A)$, $\Phi_* (U(A')) = U(A)$. 
Thus we have 
\[
\Aut (D(A),U(A)) = \Aut (D(A)).
\]
\end{rmk}

\begin{flushright}
\begin{tabular}{ll}
Kotaro Kawatani\\
Department of Mathematics \\
Graduate School of Science \\
Osaka University\\
Toyonaka 563-0043, Japan\\
kawatani@cr.math.sci.osaka-u.ac.jp\\
\end{tabular}

%Kotaro Kawatani\\
%Department of Mathematics\\
%Osaka University\\
%Toyonaka 563-0043, Japan\\
%kawatani@cr.math.sci.osaka-u.ac.jp
\end{flushright}

\end{document}